\definecolor{black}{rgb}{0.0, 0.0, 0.0}
\definecolor{red}{rgb}{1.0, 0.5, 0.5}
\title[   ]{A new family of transportation costs with applications to  reaction-diffusion and parabolic equations 
with boundary conditions}
\DeclarePairedDelimiter{\ceil}{\lceil}{\rceil}
\author[J. Morales]{Javier Morales}
\address[Javier Morales]{\newline Department of Mathematics, \newline The University of Texas at Austin, Austin, TX 78712, USA}
\email{jmorales@math.utexas.edu}
\newtheorem{lemma}{Lemma}[section]
\newtheorem{theorem}[lemma]{Theorem}
\newtheorem{corollary}[lemma]{Corollary}
\newtheorem{proposition}[lemma]{Proposition}
\newtheorem{remark}[lemma]{Remark}
\numberwithin{figure}{section}
\newcommand{\beq}{\begin{equation}}
\newcommand{\eeq}{\end{equation}}
\newcommand{\bsp}{\begin{split}}
\newcommand{\esp}{\end{split}}
\newcommand\adots{\mathinner{\mkern2mu\raise1pt\hbox{.}
\mkern3mu\raise4pt\hbox{.}\mkern1mu\raise7pt\hbox{.}}}
\renewcommand{\div}{{\rm div}}
\def\charf {\mbox{{\text 1}\kern-.30em {\text l}}}
\begin{document}

\date{\today}

\begin{abstract}This paper introduces a family of transportation costs between
non-negative measures. This family is
used to obtain parabolic and reaction-diffusion equations with drift, subject to
Dirichlet boundary condition, as the gradient flow of the entropy functional
$\int_{\Omega}\rho\log\rho+V\rho+1\hspace{1mm}dx$. In \cite{Figalli-Gigli}, Figalli and Gigli
study a transportation cost that can be used to obtain
parabolic equations with drift subject to Dirichlet boundary condition. 
However, the drift and the boundary condition are coupled 
in that work. The costs
in this paper allow the drift and the boundary condition 
to be detached. 
\end{abstract}

\maketitle \centerline{\date}

Keywords: transportation distances, gradient flows, reaction-diffusion
equations, boundary conditions.

\section{Introduction}

The use of optimal transport for the study of evolutionary equations
has proven to be a powerful method in recent years. More precisely,
one of the most surprising achievements of 
\cite{Jordan-Kin-Otto,Otto-microstructure,Otto-porous}
has been that many evolution equations of the form 
\[
\frac{d}{dt}\rho(t)={\rm{div}}\bigg(\nabla\rho(t)+\rho(t)\nabla V+\rho(t)(\nabla W\ast\rho(t)\big)\bigg),
\]
can be seen as gradient flows of some entropy functional on the space
of probability measures with respect to the Wasserstein distance:
\[
W_{2}(\mu,\nu)=\inf\bigg\{\sqrt{\int| x-y|^{2}\hspace{1mm}d\gamma(x,y)}\hspace{1em}:\hspace{1em}\pi_{1\#}\gamma=\mu,\pi_{2\#}\gamma=\nu\bigg\}.
\]
In addition to the fact that this interpretation allows one to prove entropy
estimates and functional inequalities (see \cite{V1,V2} for more details
on this area, which is still very active and in constant evolution),
this point of view provides a powerful variational method to prove
the existence of solutions to the above equations: given a time step $\tau>0$,
and an initial measure $\rho_{0}$, construct an approximate solution
by iteratively minimizing 
\[
\rho\rightarrow\frac{W_{2}(\rho,\rho_{n})}{2\tau}^{2}+\int\bigg[\rho\log\rho+\rho V+\frac{1}{2}\rho\big(W\ast\rho\big)\bigg]\hspace{1mm}dx=L[\rho|\rho_{n}],
\]
where $\rho_{n}$ is a minimum for $L[\rho|\rho_{n-1}]$. \\

 This approach will always produce solutions to parabolic equations
with Neumann boundary conditions. More recently, Figalli and Gigli
\cite{Figalli-Gigli} introduced a distance among positive measures
in an open domain $\Omega.$ Such a distance allows one to use this approach to build solutions
to the problem: 
\[
\begin{cases}
 & \frac{d}{dt}\rho(t)={\rm{div}}\bigg(\nabla\rho(t)+\rho(t)\nabla V\bigg)\hspace{1em}\mbox{in \ensuremath{\Omega},}\\
 & \rho=e^{-V}\hspace{1em}\mbox{on\hspace{1em}\ensuremath{\partial\Omega},}
\end{cases}
\]\\
in bounded domains. Note, however, that the boundary condition for $\rho$
is decided by the drift term appearing in the equation. 
Our goal here is to decouple the equation and the boundary condition. 
Also, we want to allow for the presence of a reaction term. Hence, inspired by \cite{Figalli-Gigli},
we introduce a new family of transportation
costs in a bounded open domain $\Omega.$ This family allows us to build weak solutions to
\begin{equation}
\begin{cases}
 & \partial_{t}\rho= {\rm div} \bigg(\nabla\rho(t)+\rho(t)\nabla V\bigg)-F_{x}^{\prime}(\rho)\hspace{1em} {\rm in}\hspace{1em}\Omega,\\
 & \rho=\rho_{D}\hspace{1em}\rm{on}\hspace{1em}\partial\Omega.
\end{cases}\label{Dirichlet}
\end{equation}
Here, $F$ is a function  on $[0,\infty)\times\overline{\Omega}.$  
We will use the notation
$F_{x}:=F(\hspace{1mm}\cdot\hspace{1mm},x).$
Also,  we denote  the first and second partial derivatives with respect to
the first variable by $F_{x}^{\prime}$ and 
$F_{x}^{\prime\prime}$. Our method works  for a wide class of reaction terms
$F_{x}^{\prime}.$ Some examples include
\[F_{x}^{\prime}(\rho)=W(x)\rho^{1+\beta}-Q(x),
\]
\[F_{x}^{\prime}(\rho)=W(x)\log\rho-Q(x),
\]
and
\[
F_{x}^{\prime}(\rho)=W(x)(\rho-1)|1-\rho|^{\alpha-1}-Q(x),  
\]
with $\alpha$ in $(0,1)$, $\beta\geq0,$ $W$ Lipschitz and strictly positive, 
and $Q$ Lipschitz and non-negative. (Note that when $V=W=1$ 
and $Q=0,$ the last example is equivalent to the equation 
$\partial_{t}u=\Delta u -u^{\alpha}$ via the change of variable $u=\rho-1,$
for non negative initial data.)\\
Now, we list sufficient conditions on $F:$
\begin{itemize}
\item[(F1)] 
$F_{x}$ is strictly convex for every $x$ in $\overline{\Omega}.$
\item[(F2)] 
For every $x$ in $\overline{\Omega},$ $F_{x}^{\prime}$
is a homeomorphism from $(0,\infty)$ to
$(\inf_{r>0} F_{x}^{\prime}(r),\infty).$ 
\item[(F3)] 
For every $r$ in $(0,\infty)$ the map $F_{x}^{\prime}$ is a continuous function
of $x.$
\item[(F4)] 
$
\lim_{r\rightarrow\infty}[F_{x}^{\prime}](r)=\infty\hspace{1mm}
{\rm{uniformly}}\hspace{1mm}{\rm{in}}\hspace{1mm} \ensuremath{x}.
$
\item[(F5)] There exist  positive constants $s,$ $s_{1},$ $B_{0},$ and $C_{0}$ such that,
\[
F_{x}^{\prime}(r)\leq C_{0}\hspace{1mm}r,
\]
for every $(r,x)$ in $(0,s)\times \overline{\Omega}$
and 
\[||\nabla_{x}[F_{x}^{\prime}](e^{p-V(x)})||_{L^{\infty}(\Omega)}\leq B_{0}, 
\]
for every $(p,x)$ in $(-\infty,-s_{1})\times \overline{\Omega}.$
\item[(F6)] The map
\[
 (h,x)\rightarrow \int_{[F_{x}^{\prime}]^{-1}(0)}^{[F_{x}^{\prime}]^{-1}(h)}(\log r+V)F_{x}^{\prime\prime}(r)dr,
\]
is Lipschitz on any compact subset of 
$\{(h,x) \in\mathbb{R}\times\overline{\Omega} 
\hspace{1mm}:\hspace{1mm}
[F^{\prime}_{x}]^{-1}(h(x))>0\}.$
\item[(F7)] For every $x$ in $\Omega,$ $F_{x}^{\prime} $ satisfies that either
\[
 \lim_{r\rightarrow0}F_{x}^{\prime}(r)=-\infty,
\]
or
\[
 \lim_{r\downarrow0}F_{x}^{\prime}(r)=F_{x}^{\prime}(0).
\]

\end{itemize}
We will assume that the drift, the  domain, and the boundary data satisfy:
\begin{itemize}
\item[(B1)] 
$V$ is Lipschitz.
\item[(B2)] 
$\Omega$ is Lipschitz, open, bounded,  and satisfies the interior ball
condition.
\item[(B3)] 
$\rho_{D}$ is Lipschitz and uniformly positive.
\end{itemize}
These transportation costs, that we shall define later, were found through a set of heuristic 
arguments (see Section 2). These arguments explore costs
that are related to a larger class of problems. Examples of these problems include:
\begin{equation}
\begin{cases}
 & \partial_{t}\rho={\rm div}\bigg(\nabla\rho(t)+\rho(t)\nabla V\bigg)-F_{x}^{\prime}(\rho)m(\rho)\hspace{1em}{\rm in}\hspace{1em}\Omega,\\
 & \rho=\rho_{D}\hspace{1em}{\rm {on}\hspace{1em}\partial\Omega,}
\end{cases}\label{Dirichlet2}
\end{equation}
and
\begin{equation}
\begin{cases}
 & \partial_{t}\rho={\rm div}\bigg(\nabla\rho(t)+\rho(t)\nabla V\bigg)-F_{x}^{\prime}(\rho)m(\rho)\hspace{1em}{\rm in}\hspace{1em}\Omega,\\
 & -\langle\nabla\rho-\nabla V\rho,\nu\rangle=g_{R}(\rho-\rho_{R})\hspace{1em}{\rm {on}\hspace{1em}\partial\Omega.}
\end{cases}\label{Robin}
\end{equation}
Here, the functions $g_{R},$ and $\rho_{R}$ are assumed to be uniformly positive. Also, $m\hspace{1mm}:\hspace{1mm}[0
,\infty)\rightarrow[0,\infty)$
is concave.\\

The author found this heuristic by combining several previous works. 
First, the work of Felix Otto on the formal Riemannian structure in
the space of probability measures \cite[Section 3]{Otto-Villani}. Second, the work of John Milnor \cite[Part III]{Milnor} on the formal 
Riemannian structure in the space of paths of a Riemannian manifold. 
Third, the work of Francesco Rossi  and Benedetto Piccoli \cite{Benedetto} on the generalization of 
the Benamou Brenier formula \cite{Benamou-Brenier} for positive measures. The last ingredient is 
a paper  by Figalli, Gangbo, and Yolcu \cite{Gangbo}, in which they successfully follow
the minimizing movement scheme for Lagrangian cost. The addition of 
nonlinear mobilities and the corresponding notion of generalized geodesics
has been studied in a diffrent context by J.A Carrillo, S. Lisini, G. Savare, and D. Slepcev \cite{mobility}.

The heuristic arguments are developed in the second section of this paper.
These are made rigorous only for the costs induced by Problem 1.1.
These costs produce
solutions to \eqref{Dirichlet} and \eqref{T2} via the minimizing movement scheme.
This is the main result of the paper: Theorem \ref{equation}.\\

Our family of costs depend on a positive number $\tau$ and two functions
\[
e\hspace{1mm}  :\hspace{1mm}\mathbb{R}\times\overline{\Omega}\rightarrow\mathbb{{R}}\cup\{\infty\},\\
\]
and
\[
\Psi\hspace{1mm}  :\hspace{1mm}\overline{\Omega}
\rightarrow \mathbb{R}.
\]

We will use the notation  
$e_{x}:=e(\hspace{1mm}\cdot\hspace{1mm},x).$
We will denote  the derivative of $e$ with respect to its first
entry by $e_{x}^{\prime}$. Additionally, for each fixed $x$, $[e_{x}^{\prime}]^{-1}$ denotes 
the inverse of such a derivative as a function of its own first entry. Analogous
notation will be used for $F$. We will denote the  interior of the set of points such that
$e$ is finite by $D(e)$ and the interior of the set of points $z$ such that $e(z,x)$ is finite by 
$D(e_{x}).$ We require that the functions
$\Psi$ and $e$ satisfy the 
following properties:
\begin{itemize}
\item[(C1)] 
 $\Psi$ is Lipschitz.
\item[(C2)] 
For each $x$ in  $\overline{\Omega}$, 
$e_{x}:=e(\hspace{1mm}\cdot\hspace{1mm},x)$ is  strictly
convex and lower semicontinuous.
\item[(C3)] 
For each  $L\in\mathbb{R},$ 
there exists $C(L)$ such that
\begin{equation*}
 e(z,x)\geq L| z|+C(L)\hspace{1em}
\forall (z,x)\in \mathbb{R}\times \overline{\Omega}.
\end{equation*}
\item[(C4)] 
The map $e$ is  Lipschitz in any compact subset of $D(e).$ 
(We regard $\overline{\Omega}$ as a topological space: Hence, the interior of 
any set of the form $\overline{A}\times\overline{\Omega},$ where $A$ is an open subset
of $\mathbb{R},$ is given by $A\times\overline{\Omega}$).
\item[(C5)] 
For each $x$ in $\Omega,$ the sets $D(e_{x})$ 
are of the form 
$(a(x),\infty),$ with $a(x)$ being either  a constant or negative infinity.
\item[(C6)]  
For each 
$x$, the map $e_{x}^{\prime}$ is a homeomorphism between $D(e_{x})$
and $\mathbb{R}.$
\item[(C7)] 
For each $r$ in $\mathbb{R},$ the map
$[e_{x}^{\prime}]^{-1}(r)$ is a continuous function
of $x$ and 
\[
 \lim_{r\rightarrow\infty } [e_{x}^{\prime}]^{-1}(r) =\infty,
\]
uniformly in $x.$
\item[(C8)] There exist  positive constants $s,$ $s_{1},$
$B_{0},$ and $C_{0}$ such that 
\begin{equation*}
[e_{x}^{\prime}]^{-1}\big(\log r+V(x)\big)\leq C_{0}\hspace{1mm}r,
\end{equation*}
for every  $(r,x)$ in  $(0,s)\times \overline{\Omega}$ and
\begin{equation*}
||\nabla_{x}[e^{\prime}_{x} ]^{-1}(p)||_{L^{\infty}(\Omega)}
\leq B_{0},
\end{equation*}
for every  $(p,x)$ in  $(-\infty,s_{1})\times \overline{\Omega}.$
\item[(C9)] The function $e$ statisfies that
\[
 \int_{\Omega} e(0,x) \hspace{1mm}dx=0.
\]
\end{itemize}
Item (C9) can be easily be relaxed by adding a constant to
$e;$ we have just assumed it for convenience.  The notations $e(h(x),x)$, $e(h)$, $e\circ h,$ and $e_{x}(h)$
will be used interchangeably. Similarly, we will freely interchange
$e^{\prime}(h(x),x)$, $e^{\prime}(h),$ 
$e_{x}^{\prime}(h),$ and $e^{\prime}\circ h.$

We will use $\Psi$ to obtain the desired boundary condition and
$e$ to control the reaction term. We define the cost
$Wb_{2}^{e,\Psi,\tau}$ on the set of positive measures with finite
mass $\mathcal{M}(\Omega)$, as a result of Problem 1.1, below.
\vspace{2mm}\\
\textbf{Problem 1.1 (A variant of the transportation problem).}
{\it Given $\mu,\rho\hspace{1mm}dx\in\mathcal{M}(\Omega),$ we consider the problem of minimizing 
\begin{equation}
C_{\tau}(\gamma,h):=\int_{\overline{\Omega}\times\overline{\Omega}\backslash \partial\Omega\times\partial\Omega}
\bigg( \frac{1}{2}\frac{| x-y|^{2}}{\tau}+
\Psi(y)1_{\Omega\times\partial\Omega}-\Psi(x)1_{\partial\Omega\times
\Omega}\bigg)\hspace{1mm}d\gamma+\tau\int_{\Omega}e(h)\hspace{1mm}dx,\label{cost}
\end{equation}
in the space $ADM(\mu,\rho)$ of admissible pairs $(\gamma,h).$ An admissible pair consists of
a positive measure $\gamma$ in $\overline{\Omega}\times\overline{\Omega}$
and a function $h$ in $L^{1}(\Omega)$. We require the pair to satisfy
\begin{equation}
\pi_{2\#}\gamma_{\overline{\Omega}}^{\Omega}=\rho\hspace{1mm}dx+\tau h \hspace{1mm}dx\hspace{1em}{\rm{\textit{and}}}\hspace{1em}
\pi_{1\#}\gamma_{\Omega}^{\overline{\Omega}}=\mu.\label{interior-mass}
\end{equation}
Here, the measure $\gamma_{A}^{B}$ denotes the restriction of $\gamma$ to 
$A\times B\subset\overline{\Omega}\times\overline{\Omega}.$
Also, the functions $\pi_{1}$ and $\pi_{2}$ are the canonical projections
of $\overline{\Omega}\times\overline{\Omega}$ into the first and second factor.\\}
Hence, \eqref{cost} provides a transportation cost between $\mu$ and $\rho$ given
by 
\[
Wb_{2}^{e,\Psi,\tau}(\mu,\rho):=\inf_{(\gamma,h)\in ADM}C_{\tau}
(\gamma,h).
\]
Additionally, we will denote by $Opt(\mu,\nu)$ the set of minimizers of Problem 1.1 with
$\mu$ and $\nu$ given.

The main objective is the following: given an initial measure $\rho_{0}$,
we build a family of curves $t\rightarrow\rho^{\tau}(t)$, indexed
by $\tau>0.$ We will do this by iteratively minimizing
\begin{equation}
\rho\rightarrow\int_{\Omega}[\rho\log\rho-\rho+V(x)\rho+1]\hspace{1mm}dx+Wb_{2}^{e,\Psi,\tau}(\rho_{n}^{\tau},\rho)=E^{\tau}[\rho|\rho_{n}^{\tau}],\label{step1}
\end{equation}
where $\rho_{n}^{\tau}$ is a minimum of $E^{\tau}[\rho|\rho_{n-1}^{\tau}]$
in $\mathcal{M}(\Omega)$. We define the discrete solutions by
\[
\rho^{\tau}(t):=\rho_{[t/\tau]}^{\tau}.
\]
We then show that as $\tau\downarrow0$, we can extract a subsequence
 converging to a weak solution to the problem:
\begin{equation}
\begin{cases}
 & \partial_{t}\rho={\rm{div}}\bigg(\nabla\rho+\rho\nabla V\bigg)-[e_{x}^{\prime}]^{-1}(\log\rho+V),\hspace{1em}{\rm{in}}\hspace{1em}\Omega,\\
 & \rho=e^{\Psi-V},\hspace{1em}{\rm{in}}\hspace{1em}\partial\Omega,\\
 & \rho(0)=\rho_{0}.
\end{cases}\label{T2}
\end{equation}
In particular when we set
\begin{equation}\label{Z1}
 e(z,x)  =
\begin{cases}
\int_{[F_{x}^{\prime}]^{-1}(0)}^{[F_{x}^{\prime}]^{-1}(z)}\big( \log r+V(x) \big)F_{x}^{\prime\prime}(r)\hspace{1mm}dr,
\hspace{1em}{\rm{if}} \hspace{1em} 
z>\inf_{r>0} F_{x}^{\prime}(r),\\
\liminf_{z\downarrow F^{\prime}_{x}(0)}\int_{[F_{x}^{\prime}]^{-1}(0)}^{[F_{x}^{\prime}]^{-1}(z)}\big( \log r+V(x) \big)F_{x}^{\prime\prime}(r)\hspace{1mm}dr,
\hspace{1em}{\rm{if}} \hspace{1em}z=\inf_{r>0} F_{x}^{\prime}(r),\\
+ \infty \hspace{1em}{\rm{otherwise},}
\end{cases}\\
\end{equation}
and
\begin{equation}\label{Z2}
  \Psi =\log\rho_{D}+V\hspace{1em}{\rm{on}}\hspace{1em}\partial\Omega,
\end{equation}
we obtain a weak solution to (\ref{Dirichlet}). \\
Whenever the reaction term satisfies (F1)-(F7), the drift, the boundary, and the boundary data satisfy (B1)-(B3), and $\Psi$
and $e$ are as above, then 
properties (C1)-(C9) are satisfied as well. 

We will require $\rho_{0}$ to be bounded and uniformly bounded away from zero. Using Proposition \ref{bar}, we will show the existence of positive constants $\lambda$ and $\Lambda$ such that  the weak solution satisfies
\[\frac{\lambda}{\sup e^{-V} }e^{-(C_{0}t+V)}\leq \rho(x,t)\leq \frac{\Lambda}{\inf e^{-V} }e^{-V},\]
for almost every $x$.\\
The paper is organized as follows: Section 2 introduces the heuristics used to
find the transportation costs. There, we explain the  process used to relate the cost
with the boundary conditions and reaction term. Section 3
is devoted to the study of Problem 1.1 and characterization of its
solutions in terms of convex functions. Section 4 is devoted to the proof
of the main result, Theorem \ref{equation}, which states the convergence
of the minimizing movement scheme to the weak solution. Section 5 is devoted to the
study of properties of the minimizers of $E^{\tau}[\rho|\rho_{0}]$
that we use to prove the main Theorem. Finally, Appendix A is used to prove
some technical properties of solutions to Problem 1.1 that are necessary in Section 3. 

\vspace{10em}
\textit{Acknowledgments: All the works used to find the new heuristics, with the exception of the work of Milnor, were introduced
to me by my PhD advisor Alessio Figalli. I want to express
my gratitude to him for his invaluable support and orientation. The work of Milnor was introduced 
to me by my undergraduate advisor Lazaro Recht. I also wish to express
my gratitude to him for his invaluable support and orientation}.
\section{Heuristics}
We define the cost $Wb_{2}^{e,m,\overline{e},\tau},$
as a result of Problem 2.1, below. \vspace{2mm}
\\
\textbf{Problem 2.1 (A variant of the transportation problem).}
\textit{Given $\mu,\nu\in\mathcal{M}(\Omega)$
we consider the problem of
minimizing
\begin{equation*}
\tilde{C}_{\tau}(V_{t},h_{t,},\overline{h_{t}})=
\int_{0}^{\tau}\bigg[\frac{1}{2}\int_{\Omega} 
|V_{t}|^{2}\rho_{t}\hspace{1mm}dx+\int_{\Omega} e(h_{t})m(\rho_{t})\hspace{1mm}dx
+\int_{\partial\Omega}\overline{e}(\overline{h}_{t})\hspace{1mm}d\mathcal
{H}^{d-1}\bigg]dt,
\end{equation*}
among all positive measured valued maps from $[0,\tau]$ to $\mathcal{M}(\Omega),$
satisfying $\rho_{0}\hspace{1mm}dx=\mu$ and $\rho_{\tau}\hspace{1mm}dx=\nu$. 
Here, the measures $\rho_{t}$
and the triplets
$(V_{t},h_{t,},\overline{h_{t}})$ are indexed by $t$ in $[0,\tau].$ We require them to  satisfy
the constraint }
\begin{equation}
\frac{d}{dt}\int_{\Omega}\zeta\rho_{t}\hspace{1mm}dx=
\int_{\Omega}\langle \nabla\zeta, V_{t} \rangle \rho_{t} \hspace{1mm}dx-
\int_{\Omega}\zeta h_{t}m(\rho)\hspace{1mm}dx
-\int_{\partial\Omega}\zeta\overline{h}_{t}
\hspace{1mm}d\mathcal{H}^{d-1},\hspace{1em}\forall t\in[0,\tau]\mbox{ and }\forall\zeta\in C^{\infty}(\overline{\Omega}).\label{cons}
\end{equation}
This provides a transportation cost between $\mu$ and $\nu$ given
by 
\[
Wb_{2}^{e,m,\overline{e},\tau}(\mu,\nu):=\inf \tilde{C}_{\tau}(V_{t},h_{t,},\overline{h_{t}}).
\]
Henceforth, a path is defined as a  measured valued map from $[0,\tau]$ to $\mathcal{M}(\Omega).$
We apply the minimizing movement scheme to this cost: given an initial
measure $\rho_{0}$, we build a family of curves $t\rightarrow\rho^{\tau}(t)$,
indexed by $\tau>0$, iterating the minimization of the map
\begin{equation*}
\rho\rightarrow\int_{\Omega}[\rho\log\rho-\rho+V(x)\rho+1]\hspace{1mm}dx+Wb_{2}^{e,m,\overline{e},\tau}(\rho_{n}^{\tau},\rho)=\tilde{E}^{\tau}[\rho|\rho_{n}^{\tau}],
\end{equation*}
where $\rho_{n}^{\tau}$ is a minimum of $\tilde{E}^{\tau}[\rho|\rho_{n-1}^{\tau}],$
in $\mathcal{M}(\Omega)$. We define the discrete solutions by 
\[
\rho^{\tau}(t):=\rho_{[t/\tau]}^{\tau}.
\]
Then, as $\tau\downarrow0$, we extract a subsequence converging
to a weak solution of the problem: 
\begin{equation*}
\begin{cases}
 & \partial_{t}\rho={\rm {div}}\bigg(\nabla\rho+\rho\nabla V\bigg)-[e_{x}^{\prime}]^{-1}(\log\rho+V)m(\rho),\hspace{1em}in\hspace{1em}\Omega,\\
 & -\langle\nabla\rho-\nabla V\rho,\nu\rangle=[\overline{e}_{x}^{\prime}]^{-1}(\log\rho+V)\hspace{1em}in\hspace{1em}\partial\Omega,\\
 & \rho(0)=\rho_{0}.
\end{cases}
\end{equation*}
In particular, when we set 
\begin{equation*}
e(h,x)  =
\begin{cases}
&\int_{[F_{x}^{\prime}]^{-1}(0)}^{[F_{x}^{\prime}]^{-1}(h)}(\log r+V)F_{x}^{\prime\prime}(r)\hspace{1mm}dr,
\hspace{1em} {\rm{if}} \hspace{1em}  [F_{x}^{\prime}]^{-1}(h)\geq0,\\
& +\infty,  \hspace{1em} {\rm{otherwise,}} \hspace{1em}
\end{cases}
\end{equation*}
and
\begin{equation*}
\overline{e}(\overline{h},x)=
\begin{cases}
&g_{R}\bigg(l(\overline{h})\log l(\overline{h})+(V-1)l(\overline{h})+1\bigg),
\hspace{1em} {\rm{if}} \hspace{1em}  l(\overline{h})\geq0,\\
& +\infty,  \hspace{1em} {\rm{otherwise,}} \hspace{1em}
\end{cases}
\end{equation*}
we obtain a weak solution for the problem  \eqref{Robin}. Here,
\[
l(r)=\frac{r}{g_{R}}+\rho_{R}.
\]
Also, we will show that
when we set
\begin{equation*}
e(h,x)  =
\begin{cases}
&\int_{[F_{x}^{\prime}]^{-1}(0)}^{[F_{x}^{\prime}]^{-1}(h)}(\log r+V)F_{x}^{\prime\prime}(r)\hspace{1mm}dr,
\hspace{1em} {\rm{if}} \hspace{1em}  [F_{x}^{\prime}]^{-1}(h)\geq0,\\
& +\infty,  \hspace{1em} {\rm{otherwise,}} \hspace{1em}
\end{cases}
\end{equation*}
and
\begin{equation*}
\overline{e}(\overline{h},x)  =(\log\rho_{D}+V)\overline{h},
\end{equation*}
we obtain a weak solution to \eqref{Dirichlet2}.\\

The heuristic is presented as follows. Section 2.1 characterizes optimal
triplets in terms of potentials. Section 2.2 describes a characterization
of minimal paths in terms of an equation for the potentials. Section
2.3 describes how the equation for minimal paths can be used to perform
the minimizing movement scheme. Section 2.4 describes
how to match the cost with the boundary conditions. Finally, section 2.5 describes
how to match the cost with the reaction term.   

\subsection{Optimal triplets}
In this section, we show an heuristic argument that characterizes minimizing
triplets for Problem 2.1. For such triplets, there exist functions $\varphi_{t}$
indexed in $[0,\tau]$, such that:
\begin{enumerate}[i]
\item[{(a)}] $\nabla\varphi_{t}=V_{t}$. 
\item[{(b)}] $\varphi_{t}=-\overline{e}^{\prime}(\overline{h}_{t})$ on $\partial\Omega$
and $\overline{h_{t}}=\langle\rho\nabla\varphi_{t},\nu\rangle$. 
\item[{(c)}] $\varphi_{t}=-e^{\prime}(h_{t})\mbox{\hspace{1em}in }$ $\Omega$. 
\end{enumerate}
In order to see this, we fix $t\in[0,\tau]$ and minimize
\[
\frac{1}{2}\int_{\Omega} |V_{t}|^{2}\rho_{t} \hspace{1mm} \hspace{1mm}dx+
\int_{\Omega} e(h_{t})m(\rho_{t})\hspace{1mm}dx+
\int_{\partial\Omega}{e}(\overline{h_{t}})\hspace{1mm}d\mathcal{H}^{d-1},
\]
under the constraint \eqref{cons}.\\ First, we prove (a). 

 Let us assume that we have a minimizer for Problem 2.1. Let 
 $(V_{t},h_{t},\overline{h}_{t})$ be the corresponding minimal 
 triplet at the given time. We proceed as in the classical case,
 \cite[Proposition 2.30]{users-guide}. Let $W$  be a  compactly supported
 vector field in the interior of $\Omega,$ with  ${\rm{div}}(\rho_{t}W)=0$.
 Then, $(V_{t}+sW,h_{t},\overline{h}_{t})$
still satisfies the constraint, for every $s$.
Hence, by minimality, we must have 
\[
\frac{d}{dt}\bigg|_{s=0}\frac{1}{2}\int_{\Omega}|V_{t}+sW|^{2}
\rho_{t}\hspace{1mm}dx+\int_{\Omega} e(h_{t})m(\rho_{t})\hspace{1mm}dx+
\int_{\partial\Omega}\overline{e}(\overline{h_{t}})\hspace{1mm}d\mathcal{H}^{d-1}=\int_{\Omega} 
\langle V_{t}, W \rangle \rho_{t}\hspace{1mm}dx=0.
\]

Since $W$ was an arbitrary vector field satisfying $\div(W\rho_{t})=0$, by the  Helmholtz-Hodge Theorem we obtain $\nabla\varphi_{t}=V_{t}$ for some 
$\varphi_{t}\hspace{1mm}:\hspace{1mm}\Omega\rightarrow\mathbb{R}.$\\
Second, we prove (b). \\
Let $\omega\hspace{1mm}:\hspace{1mm}\partial\Omega\rightarrow\mathbb{R}$
be a smooth function. Also, let $\alpha$ solve the elliptic problem 
\begin{equation}
\label{ep}
\begin{cases}
{\rm{div}}(\rho_{t}\nabla\alpha)=0\hspace{1em}in\hspace{1em}\Omega,\\
\langle\rho_{t}\nabla\alpha,\nu\rangle=\omega\hspace{1em}in\hspace{1em}\partial\Omega.
\end{cases}
\end{equation}
Then, $(V_{t}+s\nabla\alpha,,h_{t},\overline{h}_{t}+s\mbox{\ensuremath{\omega})}$
satisfies the constraint for any $s$. By minimality, we must have
\[
\frac{d}{ds}\bigg|_{s=0}\frac{1}{2}
\int_{\Omega}|\nabla\varphi_{t}+s\nabla\alpha|^{2}\rho_{t}\hspace{1mm}dx
+\int_{\Omega} e(h_{t})m(\rho_{t})\hspace{1mm}dx+
\int_{\partial\Omega}\overline{e}(\overline{h}_{t}+s\omega)\hspace{1mm}d\mathcal{H}^{d-1}=0.
\]
Hence, 
\[
\int_{\Omega}\langle \nabla\varphi_{t}, \nabla\alpha \rangle \rho_{t}\hspace{1mm}dx
+\int_{\partial\Omega}\omega\overline{e}^{\prime}(\overline{h_{t}})
\hspace{1mm}d\mathcal{H}^{d-1}=0.
\]
Integrating by parts and using \eqref{ep}, we obtain
\[
\int_{\partial\Omega}\omega(\overline{e}^{\prime}(\overline{h_{t}})+\varphi_{t})d\mathcal{H}^{d-1}=0.
\]
Since $\omega$ was arbitrary, we conclude
\[
\overline{e}^{\prime}(\overline{h}_{t})=-\varphi_{t}\hspace{1em}{\rm{on}}\hspace{1em}\partial\Omega.
\]
By \eqref{cons}, we must have
\begin{align*}\int_{\Omega}\zeta\partial_{t}\rho_{t}\hspace{1mm}dx & 
=\int_{\Omega} \langle \nabla\zeta , \nabla\varphi_{t}\rangle \rho_{t}  \hspace{1mm}dx
-\int_{\Omega} \zeta h_{t}\rho \hspace{1mm}dx-
\int_{\partial\Omega}\zeta\overline{h_{t}}\rho_{t}\hspace{1mm}d\mathcal{H}^{d-1}\\
 & =-\int_{\Omega}\zeta {\rm{div}}
 (\nabla\varphi\rho_{t})-\int_{\Omega}\zeta h_{t}\rho \hspace{1mm}dx+
 \int_{\partial\Omega}\zeta\bigg(\langle\nabla\varphi\rho_{t},\nu\rangle-\overline{h_{t}}\bigg)d\mathcal{H}^{d-1},\\
\end{align*}
for any $\zeta\hspace{1mm}:\hspace{1mm}\Omega\rightarrow\mathbb{R}.$\\
Thus, we conclude
\[
\langle\nabla\varphi\rho_{t},\nu\rangle=\overline{h}_{t}
\hspace{1em}{\rm{on}} \hspace{1em} \partial\Omega.
\]
Third, we show (c). \\
Let $\beta,$ $\eta\hspace{1mm}:\hspace{1mm}\Omega\rightarrow\mathbb{R}$
be smooth compactly supported functions satisfying
\begin{equation}
\label{ep2}
-{\rm{div}}(\nabla\beta\rho_{t})=m(\rho_{t})\eta.
\end{equation}
Then, for any $s,$ the triplet $(\nabla\varphi+s\nabla\beta,h+s\eta,\overline{h})$
is admissible. Consequently, we must have
\[
\frac{d}{ds}\bigg|_{s=0}\frac{1}{2}\int_{\Omega}|\nabla\varphi_{t}+
s\nabla\beta|^{2}\rho_{t}\hspace{1mm}dx+\int_{\Omega} e(h_{t}+s\eta)m(\rho_{t})\hspace{1mm}dx+
\int_{\partial\Omega}\overline{e}(\overline{h}_{t})\hspace{1mm}d\mathcal{H}^{d-1}=0.
\]
Hence,
\[
\int_{\Omega}\langle \nabla\varphi_{t}, \nabla\beta\rangle\rho_{t} \hspace{1mm}dx
+\int_{\Omega}\eta \hspace{1mm} e^{\prime}(h_{t})
m(\rho_{t})\hspace{1mm}dx=0.
\]
Integrating by parts and using \eqref{ep2}, we obtain
\[
\int_{\Omega}[\varphi_{t}+e^{\prime}(h_{t})]\eta m(\rho_{t})\hspace{1mm}dx=0.
\]
Since $\eta$ was arbitrary, we conclude
\[
e^{\prime}(h_{t})=-\varphi_{t}\hspace{1em}{\rm{in}}\hspace{1em}\Omega.
\]

\subsection{Optimal paths}
In this section, we will show an heuristic argument that characterizes
minimizers of Problem 2.1.

Let $\rho_{t}$, indexed in $[0,\tau],$ be a minimizer of Problem 2.1.
Also, for each $t$ in $[0,\tau],$ let  $\varphi_{t}$ be the potential generating the corresponding optimal triplet:
$\big(\nabla\varphi_{t},[e^{\prime}]^{-1}
(-\varphi_{t}),[\overline{e}^{\prime}]^{-1}(-\varphi_{t})\big).$
Then,
\begin{equation}
\partial_{t}\varphi+\frac{1}{2}|\nabla\varphi_{t}|^{2}-\big[\varphi_{t}[e^{\prime}]^{-1}(-\varphi_{t})+e([e^{\prime}]^{-1}(-\varphi_{t}))\big]m^{\prime}(\rho_{t})=0,\label{geo-eq}
\end{equation}
and
\[
\frac{d}{dt}\int_{\Omega}\zeta\rho_{t}\hspace{1mm}dx=
\int_{\Omega} \langle \nabla\zeta , \nabla\varphi_{t} \rangle \rho_{t}\hspace{1mm}dx
-\int_{\Omega}\zeta[e^{\prime}]^{-1}(-\varphi_{t})m(\rho_{t})\hspace{1mm}dx-
\int_{\partial\Omega}\zeta[\overline{e}^{\prime}]
^{-1}(-\varphi_{t})\hspace{1mm}d\mathcal{H}^{d-1},
\]
for every $\zeta$ in $C_{c}^{\infty}(\overline{\Omega}).$

In order to see this, we proceed by perturbing such minimizers. For each $t\in[0,\tau],$
we consider optimal triplets $\big(\nabla\omega_{t},[e^{\prime}]^{-1}(-\omega_{t}),[\overline{e}^{\prime}]^{-1}(-\omega_{t})\big).$
We require $\omega_{t}$ to be identically $0$ in the complement of a compact
subset of $(0,\tau)$.

Then, for each $s$, we let $t\rightarrow\rho_{t,s}$ and $t\rightarrow\big(\nabla\varphi_{t,s},[e^{\prime}]^{-1}(-\varphi_{t,s}),[\overline{e}^{\prime}]^{-1}(-\varphi_{t,s})\big)$
satisfy constraint \eqref{cons}. Additionally, for each $t,$ we require
the map $s\rightarrow\rho_{t,s}$ to satisfy
\[
\frac{d}{ds}\bigg|_{s=0}\int_{\Omega}\zeta\rho_{t,s}\hspace{1mm}dx=
\int_{\Omega}\langle \nabla\zeta,\nabla\omega_{t}\rangle \rho_{t,s}\hspace{1mm}dx
-\int_{\Omega}\zeta[e^{\prime}]^{-1}(-\omega_{t})m(\rho_{t,s})\hspace{1mm}dx
-\int_{\partial\Omega}\zeta[\overline{e}^{\prime}]^{-1}(-\omega_{t})\hspace{1mm}d\mathcal{H}^{d-1},
\]
and
\[
\rho_{t,0}=\rho_{t},\hspace{1em}\varphi_{t,0}=\varphi_{t}.
\]
Since $t\rightarrow\rho_{t}$ is a minimizer,  we must have
\begin{align*}
\frac{d}{ds}\bigg|_{s=0}\int_{0}^{\tau}\frac{1}{2}
\bigg[\int_{\Omega}
|\nabla\varphi_{t,s}|^{2}\rho_{t,s}\hspace{1mm}dx &+
\int_{\Omega} e([e^{\prime}]^{-1}(-\varphi_{t,s}))m(\rho_{t,s})\hspace{1mm}dx \\
&\hspace{10em}+\int_{\partial\Omega}\overline{e}([\overline{e}^{\prime}]^{-1}(-\varphi_{t,s}))
\hspace{1mm}d\mathcal{H}^{d-1}\bigg]dt=0.
\end{align*}
Consequently, 
\begin{align*}
\int_{0}^{\tau}\bigg[\int_{\Omega}\langle \nabla\varphi_{t,s},&\nabla\partial_{s}\varphi_{t,s}\rangle
\rho_{t,s}\hspace{1mm}dx  +\frac{1}{2}\int_{\Omega}|\nabla\varphi_{t,s}|^{2}\partial_{s}
\rho_{t,s}\hspace{1mm}dx-\int_{\Omega}\varphi_{t,s}\partial_{s}[e^{\prime}]^{-1}
(-\varphi_{t,s})m(\rho_{t,s})\hspace{1mm}dx\\
&+\int_{\Omega} e([e^{\prime}]^{-1}(-\varphi_{t,s}))m^{\prime}(\rho_{t,s})\partial_{s}\rho_{t,s}\hspace{1mm}dx
-\int_{\partial\Omega}\varphi_{t,s}\partial_{s}[\overline{e}^{\prime}]^{-1}(-\varphi_{t,s})\hspace{1mm}d\mathcal{H}^{d-1}\bigg]dt=0.
\end{align*}
Then,
\begin{multline*}
\int_{0}^{\tau} \bigg[ \frac{d}{ds}\bigg(\int_{\Omega}|\nabla\varphi_{t,s}|^{2}\rho_{t,s}\hspace{1mm}dx
-\int_{\Omega}\varphi_{t,s}[e^{\prime}]^{-1}(-\varphi_{t,s})m(\rho_{t,s})\hspace{1mm}dx
-\int_{\partial\Omega}\varphi_{t,s}[\overline{e}^{\prime}]^{-1}(-\varphi_{t,s})
\hspace{1mm}d\mathcal{H}^{d-1}\bigg)\\
 -\bigg(\int_{\Omega}\langle \nabla\partial_{s}\varphi_{t,s}, \nabla\varphi_{t,s}\rangle \rho \hspace{1mm}dx
 -\int_{\Omega}\partial_{s}\varphi_{t,s}[e^{\prime}]^{-1}(-\varphi_{t,s})m(\rho_{t,s})\hspace{1mm}dx
 \\
 -\int_{\partial\Omega}\partial_{s}\varphi_{t,s}[\overline{e}^{\prime}]^{-1}
 (-\varphi_{t,s})\hspace{1mm}d\mathcal{H}^{d-1}\bigg)
 -\frac{1}{2}\int_{\Omega}|\nabla\varphi_{t,s}|^{2}\partial_{s}\rho_{t,s}\hspace{1mm}dx\\
 +
 \int_{\Omega}\varphi_{t,s}[e^{\prime}]^{-1}(-\varphi_{t,s})m^{\prime}(\rho_{t,s})
 \partial_{s}\rho_{t,s}\hspace{1mm}dx
 \\+
 \int_{\Omega} e([e^{\prime}]^{-1}(-\varphi_{t,s}))m^{\prime}(\rho_{t,s})
 \partial_{s}\rho_{t,s}\hspace{1mm}dx\bigg]dt=0.
\end{multline*}
Recall that $h_{t,s}=[e^{\prime}]^{-1}(-\varphi_{t,s})$ and  $\overline{h}_{t,s}=[\overline{e}^{\prime}]^{-1}(-\varphi_{t,s})$. By \eqref{cons}, we get
\begin{equation}
\begin{aligned}
\int_{0}^{\tau}\bigg[\frac{d}{ds}&\int_{\Omega}\varphi_{t,s}\partial_{t}
\rho_{t,s}\hspace{1mm}dx - \int_{\Omega}\partial_{s}\varphi_{t,s}\partial_{t}
\rho_{t,s}\hspace{1mm}dx\\ 
&-\int_{\Omega}\bigg(\frac{1}{2}|\nabla\varphi_{t,s}|^{2}-
\big[\varphi_{t,s}[e^{\prime}]^{-1}(-\varphi_{t,s})
+e([e^{\prime}]^{-1}(-\varphi_{t,s}))\big]m^{\prime}(\rho_{t,s})\bigg)
\partial_{s}\rho_{t,s}\hspace{1mm}dx\bigg]dt=0.
\end{aligned}\label{pre-int-parts}
\end{equation}
By construction
$\partial_{s}\varphi_{\tau,s}=\partial_{s}
\rho_{\tau,s}=\partial_{s}\varphi_{0,s}=\partial_{s}\rho_{0,s}=0$. Hence,
if we integrate by parts in $t,$ we obtain
\[
-\int_{0}^{\tau}\bigg[\int_{\Omega}\bigg(\partial_{t}\varphi+\frac{1}{2}|\nabla\varphi_{t}|^{2}-\big[\varphi_{t}[e^{\prime}]^{-1}(-\varphi_{t})+e([e^{\prime}]^{-1}(-\varphi_{t}))\big]m^{\prime}(\rho_{t,s})\bigg)\partial_{s}\rho_{t,s}\hspace{1mm}dx\bigg]dt=0.
\]
This gives the desired result.
\subsection{The minimizing movement scheme.}
Given $\rho_{0}\in\mathcal{M}(\Omega)$ and $\tau>0,$ we provide heuristic
arguments to characterize the minimizers of
\begin{equation}
\begin{aligned} \{ \rho_{t} \}_{t\in[0,\tau]} \rightarrow\int_{0}^{\tau} \bigg(
\frac{1}{2}\int_{\Omega} |V_{t}|^{2}\rho_{t}\hspace{1mm}dx  +
\int_{\Omega} e(h_{t})m(\rho_{t})\hspace{1mm}dx
&+ \int_{\partial\Omega}\overline{e}(\overline{h}_{t})\hspace{1mm}d\mathcal{H}^{d-1} \bigg) dt\\
 & +\int_{\Omega}\big[\rho_{\tau}\log\rho_{\tau}+\big(V-1\big)\rho_{\tau}+1\big]\hspace{1mm}dx.
\end{aligned}
\label{scheme}
\end{equation}
Here, the
triplets $(V_{t},h_{t},\overline{h}_{t})$ satisfy \eqref{cons}.  Also, $\rho_{0}$ is fixed
and $\rho_{\tau}=\rho.$\\
In Section 2.1, we saw that for minimizing triplets we have for each
$t\in[0,\tau]$ a function $\varphi_{t},$ such that
\[
(V_{t},h_{t},\overline{h}_{t})=\big(\nabla\varphi_{t},[e^{\prime}]^{-1}(-\varphi_{t}),[\overline{e}^{\prime}]^{-1}(-\varphi_{t})\big).
\]
In Section 2.2, we found that optimal paths satisfy
\[
\partial_{t}\varphi_{t}+\frac{1}{2}|\nabla\varphi_{t}|^{2}-\big[\varphi_{t}[e^{\prime}]^{-1}(-\varphi_{t})+e([e^{\prime}]^{-1}(-\varphi_{t}))\big]m^{\prime}(\rho_{t})=0.
\]
In this section, we will show that minimizers of \eqref{scheme}
must satisfy
\[
\varphi_{\tau}=-\log\rho_{\tau}-V.
\]
In oder to see this, we suppose that we have a minimizer, $\rho_{\tau}$  and a path, 
$t\rightarrow\rho_{t},$
with corresponding triplets $t\rightarrow\big(\nabla\varphi_{t},[e^{\prime}]^{-1}(-\varphi_{t}),[\overline{e}^{\prime}]^{-1}(-\varphi_{t})\big)$.

We proceed by perturbing the path $t\rightarrow\rho_{t}$. For each $t$
we choose a function $\omega_{t}.$ We require these functions to be  identically $0$ in the
complement of a compact subset of $(0,\tau].$ This generates for each
$s$ a path, $t\rightarrow\rho_{t,s},$ as in Section 2.2, with the difference
that now the end point $\rho_{\tau,s}$ is free.\\
For a minimizer, we must have
\begin{align*}
\frac{d}{ds}\bigg|_{s=0}\bigg(\int_{0}^{\tau}\frac{1}{2}
&\int_{\Omega}|\nabla\varphi_{t,s}|^{2}\rho_{t,s}\hspace{1mm}dx 
+\int_{\Omega} e([e^{\prime}]^{-1}(-\varphi_{t,s}))m(\rho_{t,s})\hspace{1mm}dx\\&+\int_{\partial\Omega}\overline{e}([\overline{e}^{\prime}]^{-1}(-\varphi_{t,s}))\hspace{1mm}d\mathcal{H}^{d-1}dt+
\int_{\Omega} 
\big[\rho_{\tau,s}\log\rho_{\tau,s}+\big(V-1\big)\rho_{\tau,s}+1\big]\hspace{1mm}dx\bigg)=0.
\end{align*}
By \eqref{pre-int-parts} we have
\begin{align*}
\int_{0}^{\tau}
\bigg[ &\frac{d}{ds}\int_{\Omega}
\varphi_{t,s}\partial_{t}\rho_{t,s}\hspace{1mm}dx -
\int_{\Omega}\partial_{s}\varphi_{t,s}\partial_{t}
\rho_{t,s}\hspace{1mm}dx\\
&-\int_{\Omega}\bigg(\frac{1}{2}|\nabla\varphi_{t,s}|^{2}-
\big[\varphi_{t,s}[e^{\prime}]^{-1}(-\varphi_{t,s})
+e([e^{\prime}]^{-1}(-\varphi_{t,s}))\big]m^{\prime}
(\rho_{t,s})\bigg)\partial_{s}\rho_{t,s}\hspace{1mm}dx\bigg]dt\\
&\hspace{22em}+\int_{\Omega}\big[\log\rho_{\tau}+V\big]\partial_{s}\rho_{\tau,s} \\hspace{1mm}dx=0.\\
\end{align*}
Then, if we use \eqref{geo-eq} we obtain
\[
\int_{0}^{\tau}\bigg[\int_{\Omega}\varphi_{t,s}\partial_{t}\partial_{s}
\rho_{t,s}\hspace{1mm}dx+\int_{\Omega}\partial_{t}\varphi_{t,s}\partial_{s}\rho_{t,s}\hspace{1mm}dx+
\bigg]dt+\int_{\Omega}(\log\rho_{\tau}+V)\partial_{s}\rho_{\tau,s}\hspace{1mm}dx=0.
\]
Recall that by construction
$\partial_{s}\varphi_{0,s}=\partial_{s}\rho_{0,s}=0$. Integrating by parts  we get
\[
\int_{\Omega}\bigg(\varphi_{\tau}+\log\rho_{\tau}+V\bigg)
\partial_{s}\rho_{\tau,s}\hspace{1mm}dx =0.
\]
Thus, we obtain the desired result.

\subsection{The Boundary Conditions}
In Section 2.3, we showed that for minimizers of \eqref{scheme},
we have that $\varphi_{\tau}=-\log\rho_{\tau}-V$. In Section
2.1, we showed that for optimal triplets, 
\[
 -\varphi=\overline{e}^{\prime}(\overline{h}) \hspace{5mm} {\rm{on}}\hspace{5mm} \partial\Omega.
\]
Hence, if we set
$\overline{e}({\overline{h}})=\Psi \hspace{1mm}\overline{h}, $
we obtain the boundary condition
\[
 \rho_{\tau}=e^{\Psi-V} \hspace{5mm} {\rm{on}}\hspace{5mm} \partial\Omega. 
\]
This concludes the analysis for the boundary condition for 
\eqref{Dirichlet2}.\\In order to derive the boundary condition for \eqref{Robin}, we proceed as follows:
In Section 2.1, we showed that for minimizers of Problem 2.1, we must have
\[
\overline{h}_{t}=\langle\rho_{t}\nabla\varphi_{t},\nu\rangle.
\]
Hence, we expect the limit of the minimizing movement scheme to satisfy
the relation
\[
-\langle\nabla\rho,\nu\rangle-\langle\rho\nabla V,\nu\rangle=[\overline{e}^{\prime}]^{-1}(\log\rho+V).
\]
Our goal is to obtain the boundary condition
\[
-\langle\nabla\rho,\nu\rangle-\langle\rho\nabla V,\nu\rangle=g_{R}\big(\rho-\rho_{R}\big).
\]
For this purpose, we would need
\[
[\overline{e}^{\prime}]^{-1}(\log\rho+V)=g_{R}\big(\rho-\rho_{R}\big).
\]
Thus,
\[
V+\log\rho=[\overline{e}^{\prime}]\bigg(g_{R}\big(\rho-\rho_{R}\big)\bigg).
\]
Hence, if we set
\[
l(r)=\frac{r}{g_{R}}+\rho_{R},
\]
we obtain
\[
[\overline{e}^{\prime}](l^{-1}(\rho))=\log\rho+V.
\]
Then, it follows that
\[
[\overline{e}^{\prime}](l^{-1}(\rho))[l^{-1}(\rho)]^{\prime}=g_{R}(\log\rho+V).
\]
Integrating, we obtain
\[
\overline{e}(l^{-1}(\rho))=\int_{0}^{\rho}g_{R}\big(\log r+V\big)dr+C.
\]
Here, $C$ is a constant that will be chosen later. This implies
\[
\overline{e}(\rho)=g_{R}\int_{l(0)}^{l(\rho)}\big(\log r+V\big)dr+C.
\]
Thus, it suffices to set
\[
\overline{e}(\rho)=g_{R}\bigg(l(\rho)\log l(\rho)+\bigg(V-1\bigg)l(\rho)+1\bigg).
\]
\subsection{The reaction term}
In Section 2.1, we showed that optimal triplets satisfy
\[
e^{\prime}(h_{\tau})=-\varphi_{\tau}.
\]
In Section 2.3, we showed that minimizers of \eqref{scheme} satisfy
\[
\varphi_{\tau}=-\log\rho_{\tau}-V.
\]
Thus, in order to obtain
\[
h=F^{\prime}(\rho),
\]
we set
\[
e^{\prime}(F^{\prime}(\rho))=\log\rho+V.
\]
This implies
\[
e^{\prime}(F^{\prime}(\rho))F^{\prime\prime}(\rho)
=\big( \log\rho +V \big) F^{\prime\prime}(\rho).
\]
Integrating we obtain
\[
e(F^{\prime}(\rho))=\int_{0}^{\rho}\big( \log r+V \big) F^{\prime\prime}(r)\hspace{1mm}dr+C,
\]
for some constant $C$. 
Thus, it suffices to set 
\[
e(\rho)=\int_{[F^{\prime}]^{-1}(0)}^{[F^{\prime}]^{-1}(\rho)}\big( \log r+V \big) F^{\prime\prime}(r)\hspace{1mm}dr.
\]
This concludes the heuristic arguments. In the following sections we make these arguments rigorous for the case described in the introduction. 
\section{Properties of $Wb_{2}^{e,\Psi,\tau}$}

In this section we study minimizers of Problem 1.1. We begin by showing their existence.
\begin{lemma} \textbf{ (Existence of Optimal pairs)} Let $\mu$ and $\nu$ be 
absolutely continuous measures in $\mathcal{M}(\Omega)$. Then there exists
a minimizing pair for Problem 1.1.

\begin{proof} We claim the following:

\textit{Claim 1:} There exists a minimizing sequence of
admissible pairs $\{(\gamma_{n},h_{n})\}_{n=1}^{\infty}$ for which
the mass of $\{\gamma_{n}\}_{n=1}^{\infty}$ and $\{|h_{n}|\hspace{1mm}dx\}_{n=1}^{\infty}$
is equibounded and the plans in the sequence have no mass
concentrated on $\partial\Omega\times\partial\Omega.$

We assume this claim and
postpone its proof until the end of the argument. By (\ref{interior-mass})
the claim gives us a uniform bound in the total variation of $\{(\gamma_{n},h_{n}\hspace{1mm}dx)\}_{n=1}^{\infty}$.
Then, by compactness of $\overline{\Omega}$ and $\overline{\Omega}\times\overline{\Omega},$
for a subsequence $\{(\gamma_{n},h_{n})\}_{n=1}^{\infty}$,
not relabeled, we have weak convergence to regular Borel measures,
with finite total variation, $\gamma$ and $\mathbf{h}$. This 
converge is in duality with continuous
bounded functions in $\overline{\Omega}\times\overline{\Omega}$ and
$\overline{\Omega},$ respectively.

Assumption (C3) and the Dunford-Pettis Theorem allows 
us to conclude that $\mathbf{h}=h\hspace{1mm}dx,$
for some $h$ in $L^{1}(\overline{\Omega})$ and that $\{h_{n}\}_{n=1}^{\infty}$
converges to $h$ in duality with functions in $L^{\infty}(\overline{\Omega})$. Since $\pi_{2\#}\big(\gamma_{n}\big)_{\overline{\Omega}}^{\Omega}=\rho\hspace{1mm}dx+h_{n}\tau$,
we have that for any $\zeta\in 
C_{c}(\overline{\Omega}),$
\begin{multline*}
\int_{\overline{\Omega}\times\overline{\Omega}}
\zeta\circ\pi_{2}\hspace{1mm}d\gamma_{\overline{\Omega}}^{\Omega}
=\int_{\overline{\Omega}\times\overline{\Omega}}\zeta\circ\pi_{2}
\hspace{1mm}d\gamma=
\lim_{n\rightarrow\infty}\int_{\overline{\Omega}\times\overline{\Omega}}
\zeta\circ\pi_{2}\hspace{1mm}d\gamma_{n}\\ 
=\lim_{n\rightarrow\infty}\int_{\overline{\Omega}
\times\overline{\Omega}}\zeta\circ\pi_{2}
\hspace{1mm}d(\gamma_{n})_{\overline{\Omega}}^{\Omega}
=\lim_{n\rightarrow\infty}\int_{\overline{\Omega}}\zeta\rho\hspace{1mm}dx
+\tau\int_{\overline{\Omega}}\zeta h_{n}\hspace{1mm}dx=\int_{\overline{\Omega}}
\zeta\rho\hspace{1mm}dx+\tau\int_{\overline{\Omega}}\zeta h\hspace{1mm}dx.
\end{multline*}
Hence, $\pi_{2\#}\gamma_{\overline{\Omega}}^{\Omega}=\rho\hspace{1mm}dx+h\tau$.
It can also be shown that $\pi_{1\#}\gamma{}_{\Omega}^{\overline{\Omega}}=\mu$ in
an analogous way. This implies that $(\gamma,h)$ is in $ADM(\mu,\nu)$.

Since the sequence $\{ h_{n} \}_{n=1}^{\infty}$ converges weakly in $L^{1}(\Omega)$ to $h,$ using assumptions 
(C2)-(C6) and  \cite[Theorem 1]{Ioffe}, we get 
\[
 \liminf_{n\rightarrow\infty}\int_{\Omega}e(h_{n}(x),x)dx\geq\int e(h(x),x)dx.
\]
We also claim the following:

\textit{Claim 2:} there exists a further subsequence $\{\gamma_{n}\}_{n=1}^{\infty},$
not relabeled, with the property that $\{(\gamma_{n})_{\Omega}^{\Omega}\}_{n=1}^{\infty},$
$\{(\gamma_{n})_{\partial\Omega}^{\Omega}\}_{n=1}^{\infty},$ and $\{(\gamma_{n})_{\Omega}^{\partial\Omega}\}_{n=1}^{\infty}$
converge weakly to $\gamma_{\Omega}^{\Omega},\gamma_{\partial\Omega}^{\Omega},$ and
$\gamma_{\Omega}^{\partial\Omega}$ in duality with continuous and bounded
functions in $C(\overline{\Omega}\times\overline{\Omega})$, $C(\partial\Omega\times\overline{\Omega}),$
and $C(\overline{\Omega}\times\partial\Omega),$ respectively. We
will also postpone the proof of this claim until the end of the argument.

Since
$\Psi$ is bounded and continuous, this claim implies that
\begin{multline*}
\lim_{n\rightarrow\infty}\bigg[\int_{\overline{\Omega}\times\overline{\Omega}}
\frac{|x-y|^{2}}{2\tau}\hspace{1mm}d(\gamma_{n})_{\Omega}^{\Omega}
+\int_{\partial\Omega\times\overline{\Omega}}
\bigg(\frac{|x-y|^{2}}{2\tau}-\Psi(x)\bigg)
\hspace{1mm}d(\gamma_{n})_{\partial\Omega}^{\Omega}\\
+
\int_{\overline{\Omega}\times\partial\Omega}
\bigg(\frac{|x-y|^{2}}{2\tau}+\Psi(y)\bigg)
\hspace{1mm}d(\gamma_{n})_{\Omega}^{\partial\Omega}\bigg] 
=\int_{\overline{\Omega}\times\overline{\Omega}}\frac{|x-y|^{2}}{2\tau}
\hspace{1mm}d\gamma_{\Omega}^{\Omega}\\
+\int_{\partial\Omega\times\overline{\Omega}}
\bigg(\frac{|x-y|^{2}}{2\tau}-\Psi(x)\bigg)\hspace{1mm}d\gamma_{\partial\Omega}^{\Omega}
+
\int_{\overline{\Omega}\times\partial\Omega}
\bigg(\frac{|x-y|^{2}}{2\tau}+\Psi(y)\bigg)\hspace{1mm}d\gamma_{\Omega}^{\partial\Omega}.
\end{multline*}
Hence, this shows the existence of minimizers, provided we prove
the two claims. In order to prove the first one, we note that due to 
\eqref{cost} and \eqref{interior-mass}  we can assume, without loss of generality, that
the plans in the minimizing sequence have no mass concentrated on 
$\partial\Omega\times\partial\Omega$.
Also, due to (C3)
and \eqref{interior-mass}, 
\begin{align*}
C_{\tau}(\gamma,h) & \geq-||\Psi||_{\infty}\bigg(|\gamma_{\partial\Omega}^{\Omega}|+|\gamma_{\Omega}^{\partial\Omega}|\bigg)+K\int_{\Omega}|h|\hspace{1mm}dx+C(K)|\Omega|\\
 & \geq-||\Psi||_{\infty}\bigg(|\gamma_{\Omega}^{\partial\Omega}|+|\gamma_{\partial\Omega}^{\Omega}|\bigg)+K|h|(\Omega)+C(K)|\Omega|\\
 & \geq-||\Psi||_{\infty}\bigg(\mu(\Omega)+\nu(\Omega)+\tau|h|(\Omega)\bigg)+K|h|(\Omega)+C(K)|\Omega|,\\
 \end{align*}
for any $K$. Taking $K$ large enough, we obtain a uniform bound
on $|h|(\Omega)$ and consequently on $|\gamma|,$ for any minimizing
sequence. This proves the first claim.

As previously explained, this claim gives us a subsequence, not relabeled
$\{(\gamma_{n},h_{n})\}_{n=1}^{\infty}$, that converges weakly to
$(\gamma,h).$ To prove the second claim, we note that the measures
in the sequence $\{(\gamma_{n})_{\Omega}^{\Omega}$,$(\gamma_{n})_{\partial\Omega}^{\Omega},(\gamma_{n})_{\Omega}^{\partial\Omega}\}_{n=1}^{\infty}$
have uniformly bounded mass. Then, by compactness of 
$\overline{\Omega}\times\overline{\Omega},$ 
$\partial\Omega\times\overline{\Omega},$
and
$\overline{\Omega}\times\partial\Omega$ we can find a further
subsequence $\{(\gamma_{n})_{\Omega}^{\Omega}$,$(\gamma_{n})_{\partial\Omega}^{\Omega},(\gamma_{n})_{\Omega}^{\partial\Omega}\}_{n=1}^{\infty}$,
not relabeled, weakly converging to the measures $\sigma_{0},\sigma_{1},$
and $\sigma_{2}.$ This convergence is in duality with continuous and bounded functions
in $C(\overline{\Omega}\times\overline{\Omega})$, $C(\partial\Omega\times\overline{\Omega}),$
and $C(\overline{\Omega}\times\partial\Omega),$ respectively. Using
the definition of weak convergence, it is easy to verify that we must
have 
\begin{equation}
\label{decomp}
\gamma=\sigma_{0}+\sigma_{1}+\sigma_{2}. 
\end{equation}
We will prove the second claim by showing that
$\sigma_{0}=\gamma_{\Omega}^{\Omega},$ 
$\sigma_{1}=\gamma_{\partial\Omega}^{\Omega},$
and $\sigma_{2}=\gamma_{\Omega}^{\partial\Omega}$.
By \eqref{decomp}, this is a consequence
of the measures 
$\pi_{2\#}\sigma_{0},$ $\pi_{1\#}\sigma_{0},$ $\pi_{2\#}\sigma_{1},$
and $\pi_{1\#}\sigma_{2}$ having no mass concentrated in 
$\partial\Omega$. In
order to see that these measures have this property, we let $A\subset\partial\Omega$ be a compact set
and we take a sequence $\{\eta_{k}\}_{k=1}^{\infty}$ of uniformly bounded
functions in $C(\overline{\Omega})$ that decreases monotonically to $1_{A}.$
Additionally, we require that the sets $supp(\eta_{k})$ decrease
monotonically to $A.$ Since $\overline{\Omega}$ is bounded, by the
dominated convergence Theorem,
\[
\int_{A}\hspace{1mm}d\pi_{2\#}\sigma_{0}=\int_{\overline{\Omega}}1_{A}\circ\pi_{2}\hspace{1mm}d\sigma_{0}
=\lim_{k\rightarrow \infty}\int_{\overline{\Omega}}\eta_{k}\circ\pi_{2}\hspace{1mm}d\sigma_{0}.
\]
Also, by construction we have 
\begin{multline*}
\int_{\overline{\Omega}}\eta_{k}\circ\pi_{2}\hspace{1mm}d\sigma_{0}=
\lim_{n\rightarrow\infty}\int_{\overline{\Omega}}\eta_{k}\circ\pi_{2}
\hspace{1mm}d(\gamma_{n})_{\Omega}^{\Omega}\leq
\lim_{n\rightarrow\infty}\int_{\overline{\Omega}\times \overline{\Omega}}\eta_{k}\circ\pi_{2}
\hspace{1mm}d(\gamma_{n})_{\overline{\Omega}}^{\Omega}\\
=\lim_{n\rightarrow\infty}\int_{\overline{\Omega}}\eta_{k}\rho\hspace{1mm}dx
+\tau\int_{\overline{\Omega}}\eta_{k}h_{n}\hspace{1mm}dx=
\int_{\overline{\Omega}}\eta_{k}\rho\hspace{1mm}dx+\tau\int_{\overline{\Omega}}\eta_{k}h
\hspace{1mm}dx\\
=\int_{supp(\eta_{k})}\eta_{k}(\rho+\tau h)\hspace{1mm}dx\leq\sup(\eta_{k})\int_{supp(\eta_{k})}|\rho+\tau h| \hspace{1mm}dx.
\end{multline*}
Since $\{\eta_{k}\}_{k=0}^{\infty}$ is uniformly bounded and $supp(\eta_{k})$
converges monotonically to the set $A\subset\partial\Omega$ with
zero $\mathcal{L}^{d}$ measure, we have
\[
\int_{A}d\pi_{2\#}\sigma_{0}\leq\lim_{k\rightarrow\infty}
\sup(\eta_{k})\int_{supp(\eta_{k})}|\rho+\tau h| \hspace{1mm}dx=0.
\]
Thus, we conclude that $\pi_{2\#}\sigma_{0}(A)=0,$ for any measurable subset
$A$ of $\partial\Omega$; the proof for the measures $\pi_{1\#}\sigma_{0},\pi_{2\#}\sigma_{1},$
and $\pi_{1\#}\sigma_{2}$ is analogous. This establishes the second
claim. Consequently, the Lemma is proven.
\end{proof} \label{exists} \end{lemma}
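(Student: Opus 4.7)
The plan is to apply the direct method of the calculus of variations. Take a minimizing sequence $\{(\gamma_n,h_n)\}_{n=1}^\infty$ with $C_\tau(\gamma_n,h_n)$ converging to the infimum. Since the cost integrand is identically zero on $\partial\Omega\times\partial\Omega$ and mass placed there is not forced by the marginal conditions \eqref{interior-mass}, I may assume without loss of generality that every $\gamma_n$ places no mass on $\partial\Omega\times\partial\Omega$. The first task is a uniform a priori bound: the transport term of $C_\tau$ is bounded below by $-\|\Psi\|_\infty\bigl(|(\gamma_n)_{\partial\Omega}^{\Omega}|+|(\gamma_n)_{\Omega}^{\partial\Omega}|\bigr)$, and \eqref{interior-mass} estimates these boundary masses by $\mu(\Omega)+\nu(\Omega)+\tau\|h_n\|_{L^1(\Omega)}$. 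Applying (C3) with any $L>\|\Psi\|_\infty$, the linear-growth term $\tau L\|h_n\|_{L^1}$ dominates the $\Psi$-loss, yielding uniform bounds on $\|h_n\|_{L^1(\Omega)}$ and hence on $|\gamma_n|(\overline{\Omega}\times\overline{\Omega})$.

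Next, I would extract weakly convergent subsequences $\gamma_n\rightharpoonup\gamma$ and $h_n\,dx\rightharpoonup\mathbf{h}$ as finite Radon measures on $\overline{\Omega}\times\overline{\Omega}$ and $\overline{\Omega}$ respectively. Because (C3) holds for every $L$, the sequence $\{h_n\}$ satisfies the de la Vall\'ee Poussin criterion and is uniformly integrable; by Dunford--Pettis, $\mathbf{h}=h\,dx$ with $h\in L^1(\Omega)$ and $h_n\rightharpoonup h$ weakly in $L^1$. Testing the marginal conditions against $\zeta\in C_c(\overline{\Omega})$ and passing to the limit shows $(\gamma,h)\in ADM(\mu,\nu)$. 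Lower semicontinuity of $h\mapsto\int_\Omega e(h(x),x)\,dx$ along this weak $L^1$ convergence then follows from Ioffe's theorem, using the convexity and lower semicontinuity of $e_x$ from (C2) together with the growth and regularity encoded in (C3)--(C6).

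The main obstacle is that the transport part of the cost has a discontinuous integrand: the indicators $1_{\Omega\times\partial\Omega}$ and $1_{\partial\Omega\times\Omega}$ introduce jumps of size $\Psi$ across the seams between $\Omega\times\Omega$, $\partial\Omega\times\Omega$ and $\Omega\times\partial\Omega$, so weak convergence of the whole $\gamma_n$ does not, on its own, pass the cost to the limit. I would handle this by extracting a further subsequence so that $(\gamma_n)_\Omega^\Omega$, $(\gamma_n)_{\partial\Omega}^\Omega$, $(\gamma_n)_\Omega^{\partial\Omega}$ converge weakly against continuous bounded test functions on $\overline{\Omega}\times\overline{\Omega}$, $\partial\Omega\times\overline{\Omega}$, $\overline{\Omega}\times\partial\Omega$ respectively, to limits $\sigma_0,\sigma_1,\sigma_2$ with $\gamma=\sigma_0+\sigma_1+\sigma_2$. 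The crux is to identify $\sigma_0=\gamma_\Omega^\Omega$, $\sigma_1=\gamma_{\partial\Omega}^\Omega$, $\sigma_2=\gamma_\Omega^{\partial\Omega}$, which reduces to showing that the push-forwards $\pi_{2\#}\sigma_0$, $\pi_{1\#}\sigma_0$, $\pi_{2\#}\sigma_1$, $\pi_{1\#}\sigma_2$ put no mass on $\partial\Omega$. Here absolute continuity enters decisively: since $\pi_{2\#}(\gamma_n)_{\overline{\Omega}}^{\Omega}=\rho\,dx+\tau h_n\,dx$ converges to the absolutely continuous measure $\rho\,dx+\tau h\,dx$, testing against a uniformly bounded monotone sequence $\eta_k\downarrow 1_A$ for compact $A\subset\partial\Omega$ and using dominated convergence together with $\mathcal{L}^d(A)=0$ forces $\pi_{2\#}\sigma_0(A)=0$, and the other three push-forwards are handled symmetrically. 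Once the decomposition is established, each piece of the transport cost converges by weak convergence against continuous bounded integrands (note that $\Psi$ is continuous by (C1)), which combined with the lower semicontinuity of $\int e(h)\,dx$ shows that $(\gamma,h)$ achieves the infimum.
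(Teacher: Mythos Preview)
Your proposal is correct and follows essentially the same approach as the paper: the same a priori bound via (C3) and the marginal constraints, Dunford--Pettis for the $h_n$, Ioffe's theorem for the entropy term, and the same decomposition argument identifying the weak limits $\sigma_0,\sigma_1,\sigma_2$ of the restricted plans with $\gamma_\Omega^\Omega,\gamma_{\partial\Omega}^\Omega,\gamma_\Omega^{\partial\Omega}$ by ruling out boundary concentration of the relevant push-forwards through the absolute continuity of $\mu$ and $\rho\,dx+\tau h\,dx$. The only cosmetic difference is that you name the de la Vall\'ee Poussin criterion explicitly, which is exactly what (C3) encodes.
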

We will use the following definitions: \vspace{1mm}

Given an admissible pair $(\gamma,h),$ we define
\begin{equation*}
d_{\Psi,\tau}(x)=
\begin{cases}
& \inf_{y\in\partial\Omega}\frac{| x-y|^{2}}{2\tau}+\Psi(y)\hspace{1em} {\rm{if}}
\hspace{1em} x\in\Omega,\\
& 0 \hspace{1em}{\rm{otherwise}},
\end{cases}
\end{equation*}
and
\begin{equation*}
d_{-\Psi,\tau}(y)=
\begin{cases}
&\inf_{x\in\partial\Omega}\frac{| x-y|^{2}}{2\tau}-\Psi(x)
\hspace{1em}{\rm{if}}\hspace{1em}y\in\Omega,\\
&0\hspace{1em}{\rm{otherwise}}.
\end{cases}
\end{equation*}
For any $x$ and $y$ in $\Omega$ we denote by $\mathcal{P}_
{\Psi,\tau}(x)$ and $\mathcal{P}_{-\Psi,\tau}(y)$
the sets where the infima are respectively attained.
Henceforth, $P_{\Psi,\tau}$ and $P_{-\Psi,\tau}$ will be measurable maps
from $\overline{\Omega}$ to $\partial\Omega$ such that
\[
 d_{\Psi,\tau}(x)=\frac{| x - P_{\Psi,\tau}(x)|^{2}}{2\tau} +
 1_{\Omega}(x)\Psi\big(
 P_{\Psi,\tau}(x)\big),
\]
and
\[
 d_{-\Psi,\tau}(y)=\frac{| y - P_{-\Psi,\tau}(y)|^{2}}{2\tau}
 -1_{\Omega}(y)\Psi\big(
 P_{-\Psi,\tau}(y)\big).
\]
It is well known that such  maps are uniquely defined on 
$\mathcal{L}^{d}$-a.e. in
$\Omega.$ (Indeed, $P_{\Psi,\tau}(x)$ and $P_{-\Psi,\tau}(y)$
are uniquely defined
whenever the Lipschitz functions 
$d_{\Psi,\tau_{|\Omega}}$ and
$d_{-\Psi,\tau_{|\Omega}}$
are differentiable
and they are given by 
$P_{\Psi,\tau}(x)=x-\nabla_{x} 
d_{\Psi,\tau}$
and  
$P_{-\Psi,\tau}(y)=y-\nabla_{y} 
d_{-\Psi,\tau}.$
 Here, we are
just defining them on the whole $\overline{\Omega}$ via a measurable selection argument (we omit the details).

Henceforth,
$P\hspace{1mm}:\hspace{1mm}\overline{\Omega}\rightarrow \partial\Omega$
will be a measurable map defined
in the whole $\overline{\Omega}$ with the property that 
\[
 |x-P(x)|=d(x,\partial\Omega) \hspace{1em} \forall x\in \overline{\Omega}.
\]
  We define the costs 
\[
\tilde{c}(x,y)=\frac{| x-y|^{2}}{2\tau}1_{(\partial\Omega\times
\partial\Omega)^{c}}-1_{\partial\Omega\times\Omega}\Psi(x)
+1_{\Omega\times\partial\Omega}\Psi(y),
\]
\[
c(x,y)=\frac{| x-y|^{2}}{2\tau},
\]
\[
c_{1}=c_{|\Omega\times\overline{\Omega}},
\]
and
\[
c_{2}=c_{|\overline{\Omega}\times\Omega}.
\]
Also, we define the set
\[
\mathcal{A}=\bigg\{(x,y)\in\overline{\Omega}\times\overline{\Omega}
\hspace{1em}:\hspace{1em}d_{\Psi,\tau}(x)+d_{-\Psi,\tau}(y)\geq\tilde{c}(x,y)\bigg\}.
\]
We will work with the topological space $(\overline{\Omega}\times\overline{\Omega},\mathcal{G})$.
The topology of this space built by considering the product topology, in the
spaces $\Omega\times\Omega,$ $\partial\Omega\times\Omega,$ $ \Omega\times\partial\Omega,$ and $\partial\Omega\times\partial\Omega,$
and then taking the disjoint union topology. In other words, the space $\overline{\Omega}\times\overline{\Omega}$
is equipped with the topology
\[
\partial\Omega\times\partial\Omega\coprod\partial\Omega\times\Omega\coprod\Omega
\times\partial\Omega\coprod\Omega\times\Omega.
\]
Hence, if we are given continuous functions $\{f_{i}\}_{i=1}^{4}$ from
the spaces $\Omega\times\Omega,$ $\partial\Omega\times\Omega,\Omega\times\partial\Omega,$ and $\partial\Omega\times\partial\Omega$ to
any other topological space $Y$, then there exists a unique continuous function
$f\hspace{1mm}:\hspace{1mm}\overline{\Omega}\times\overline{\Omega}\rightarrow Y$
such that
\[
f_{i}=f\circ\phi_{i}.
\]
Here, $\{\phi_{i}\}_{i=1}^{4}$ are the canonical injections of $\Omega\times\Omega,$ $\partial\Omega\times\Omega,\Omega\times\partial\Omega,$ 
and $\partial\Omega\times\partial\Omega$
into $\overline{\Omega}\times\overline{\Omega}.$
The support of the measures $\gamma$ in $\overline{\Omega}\times\overline{\Omega}$
will be taken with respect to this topology. Hence, given a positive $\gamma$
measure in $\overline{\Omega}\times\overline{\Omega},$ 
$supp(\gamma)$ is defined to be set of points  $(x,y)$ in
$\overline{\Omega}\times\overline{\Omega}$ such that for every 
$G$ in $\mathcal{G}$ containing $(x,y),$ we have $\gamma(G)>0.$

Additionally, we will use the notions of 
 $c$-cyclical monotonicity, 	
 $c$-transforms, 
 $c$-concavity, and 
 $c$-superdifferential. We refer the reader to
 \cite[Definitions 1.7 to 1.10]{users-guide}. We will only 
 use the superdifferential. Thus, for any cost $c,$ we will denote by $\partial^{c}\varphi$ the superdifferential
 of any $c$-concave function $\varphi$.
 
The following Proposition characterizes solutions of Problem 1.1 satisfying
some hypotheses. We remark that Proposition \ref{trans-mass} provides conditions under which  these hypotheses are satisfied.
 
\begin{proposition}\label{phi} \textbf{(Characterization of optimal
pairs) } Let $\mu$ and $\nu$ be 
absolutely continuous measures in $\mathcal{M}(\Omega)$.
Also, let $(\gamma,h)$ be in $ADM(\mu,\nu).$ 
Assume that  $\mu$ and $\nu+\tau h$ are strictly positive. Then, the following are
equivalent:
\begin{enumerate}[(i)]
\item $C_{\tau}(\gamma,h)$ is minimal among all pairs in $ADM\big(\mu,\nu\big)$
with $h$ fixed. 
\item $\gamma$ is concentrated on $\mathbb{\mathcal{{A}}}$ and
$supp(\gamma)\cup\partial\Omega\times\partial\Omega$
is $\tilde{c}$-cyclically monotone. 
\item There exist functions $\varphi$, $\varphi^{*}:\overline{\Omega}\rightarrow\mathbb{{R}}$
having the following properties: 
\begin{enumerate}
\item $\varphi_{|\Omega}$ is $c_{1}-concave$, $\varphi_{|\Omega}=(\varphi^{*})^{c_{1}}$,
$\varphi_{|\Omega}^{*}$ is $c_{2}-concave,$ and $\varphi_{|{\Omega}}^{*}=\varphi^{c_{2}}$
. 
\item $supp(\gamma_{\Omega}^{\overline{\Omega}})\subset\partial^{c_{1}}\varphi$
and $supp(\gamma_{\overline{\Omega}}^{\Omega})\subset\partial^{c_{2}}\varphi^{*}$. 
\item $\varphi_{|\partial\Omega}=\Psi$ and $\varphi_{|\partial\Omega}^{*}=-\Psi$. 
\end{enumerate}
\end{enumerate}
Moreover, $(\gamma,h)$ is optimal in $ADM\big(\mu,\nu\big)$ if and only if $\varphi_{|\Omega}^{*}=-e^{\prime}\circ h+\kappa,$ $\mathcal{L}^{d}$ a.e.,
for some constant $\kappa.$
 \end{proposition}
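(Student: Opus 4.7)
The strategy is to adapt classical Kantorovich duality to the partial-marginal setting with boundary exchange, following the pattern of \cite{Figalli-Gigli}. I would separate the proof into the three equivalences, treating $h$ as fixed for (i)--(iii), and then deal with joint optimality for the final claim via a first-order perturbation argument.

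For (i) $\Rightarrow$ (ii), with $h$ fixed the objective reduces to minimizing $\int \tilde{c}\, d\gamma$ subject to fixed partial marginals. First I would use the standard permutation-of-pairs argument to establish $\tilde{c}$-cyclical monotonicity of $\mathrm{supp}(\gamma)\setminus(\partial\Omega\times\partial\Omega)$; taking the union with $\partial\Omega\times\partial\Omega$ creates no obstacle because, by (C3) and \eqref{interior-mass}, one may assume the optimizer has no mass on $\partial\Omega\times\partial\Omega$, as already observed in the proof of Lemma \ref{exists}. To extract the containment in $\mathcal{A}$, I would fix $(x,y)\in\mathrm{supp}(\gamma)$ and compete with the two-point plan that routes the mass at $x$ through $y^{*}\in\mathcal{P}_{\Psi,\tau}(x)\subset\partial\Omega$ and supplies $y$ from $x^{*}\in\mathcal{P}_{-\Psi,\tau}(y)\subset\partial\Omega$; minimality of $\gamma$ directly yields $d_{\Psi,\tau}(x)+d_{-\Psi,\tau}(y)\geq\tilde{c}(x,y)$. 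The reverse implication is the easy direction, via another permutation argument combined with the inclusion in $\mathcal{A}$.

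For (ii) $\Rightarrow$ (iii), I would construct $\varphi$ on $\Omega$ by the Rockafellar-type formula applied to the cyclically monotone set, declaring $\varphi=\Psi$ on $\partial\Omega$, and then set $\varphi^{*}=\varphi^{c_{2}}$. The concentration of $\gamma$ on $\mathcal{A}$, combined with the cyclical monotonicity now including boundary pairs, pins $\varphi^{*}=-\Psi$ on $\partial\Omega$ and returns $\varphi=(\varphi^{*})^{c_{1}}$ on $\Omega$; the support-in-superdifferential conditions then follow from the equality case of the $c$-concavity inequalities at points of $\mathrm{supp}(\gamma)$. For (iii) $\Rightarrow$ (i), the potentials furnish the Kantorovich lower bound
\[
\int\tilde{c}\,d\gamma'\;\geq\;\int_{\Omega}\varphi\,d\mu+\int_{\Omega}\varphi^{*}(\rho+\tau h)\,dx
\]
for any admissible $\gamma'$ paired with the same $h$, by splitting the integral over the three components of $\overline{\Omega}\times\overline{\Omega}\setminus\partial\Omega\times\partial\Omega$ and applying the boundary identities $\varphi=\Psi$, $\varphi^{*}=-\Psi$ on the pieces with a boundary factor. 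Equality holds for $\gamma$ by (iii)(b),(c), so $\gamma$ is optimal.

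For the final statement, note that the joint problem in $(\gamma,h)$ is convex: linear in $\gamma$ with linear constraints, and strictly convex in $h$ by (C2). Given $(\gamma,h)$ satisfying (i)--(iii), I would perturb $h\mapsto h+\epsilon\eta$ for arbitrary $\eta\in L^{\infty}(\Omega)$ and construct an admissible family $\gamma_{\epsilon}$ matching the new marginal $\rho+\tau(h+\epsilon\eta)$; this is possible because the marginals on $\partial\Omega$ are unconstrained, so the extra mass can be absorbed via the boundary. Using the envelope-theorem form of the dual bound, the first variation of the cost reads
\[
\frac{d}{d\epsilon}\bigg|_{\epsilon=0}C_{\tau}(\gamma_{\epsilon},h+\epsilon\eta)=\tau\int_{\Omega}\bigl(\varphi^{*}+e'(h)\bigr)\eta\,dx,
\]
since $\varphi^{*}$ is the dual variable for the $\Omega$-marginal constraint. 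Joint optimality with $\eta$ arbitrary forces $\varphi^{*}+e'(h)\equiv\kappa$ a.e.; convexity of the joint problem makes this first-order condition sufficient for the converse.

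The hardest step will be (ii) $\Rightarrow$ (iii): verifying that the constructed potentials attain exactly the values $\pm\Psi$ on $\partial\Omega$ and that the $c_{1}$- and $c_{2}$-concavity identities hold up to the boundary. This requires careful handling of the special product topology declared on $\overline{\Omega}\times\overline{\Omega}$ (which decouples interior and boundary limits), the possible non-uniqueness of the projections $P_{\Psi,\tau}$ and $P_{-\Psi,\tau}$, and the relationship between the Lipschitz functions $d_{\Psi,\tau},d_{-\Psi,\tau}$ and the $c_{1}$-/$c_{2}$-transforms of $\varphi,\varphi^{*}$.
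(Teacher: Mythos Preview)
Your treatment of the equivalences (i)$\Leftrightarrow$(ii)$\Leftrightarrow$(iii) matches the paper's: the concentration on $\mathcal{A}$ via boundary rerouting, the permutation argument for $\tilde{c}$-cyclical monotonicity (after adjoining $\partial\Omega\times\partial\Omega$), the Rockafellar construction of a $\tilde{c}$-concave potential which is then shifted by $\Psi$ on the boundary, and the dual bound for (iii)$\Rightarrow$(i) are all the same ideas in the same order.

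The genuine divergence is in the ``moreover'' clause. The paper does \emph{not} argue via an envelope theorem. Instead it invokes the pointwise perturbation inequalities of Proposition~\ref{perturba} in the Appendix (obtained by explicit competitors that reroute small amounts of mass and adjust $h$ locally) to show that, for a.e.\ $y$, the pair $(S(y),y)$ lies simultaneously in the subdifferential of the convex function $\frac{|y|^{2}}{2}+\tau\, e'\!\circ h(y)$ and in the subdifferential of $\frac{|y|^{2}}{2}-\tau\,\varphi^{*}(y)$. Since the optimal map $S$ is a.e.\ uniquely determined (Lemma~\ref{cre-mass}, Corollary~\ref{uniqueness}), both convex Lipschitz functions have the same gradient a.e., hence differ by a constant. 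The converse is a direct chain of inequalities using (C2), exactly as in your convexity remark.

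Your envelope-theorem route is appealing but has a real gap. To get the two-sided first-order condition you need a competitor family $(\gamma_{\varepsilon},h+\varepsilon\eta)$ with
\[
C_{\tau}(\gamma_{\varepsilon},h+\varepsilon\eta)\;\le\;C_{\tau}(\gamma,h)+\tau\varepsilon\!\int_{\Omega}\bigl(\varphi^{*}+e'(h)\bigr)\eta\,dx+o(\varepsilon),
\]
and the dual bound only gives the reverse inequality. The natural competitor (absorb the extra marginal $\tau\varepsilon\eta$ through $\partial\Omega$) produces a cost increment governed by $d_{-\Psi,\tau}(y)$, not by $\varphi^{*}(y)$; showing these agree at the relevant points is essentially the content of the perturbation inequalities the paper proves. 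Equivalently, differentiability of the transport value in the second marginal is not a free fact here---it is exactly what the Appendix establishes pointwise. If you want to salvage your approach, you would need to supply those competitor estimates, at which point you have reproduced the paper's argument.
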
 \begin{proof} 
We start by proving that $(i)\implies (ii).$
Define the plan $\tilde{\gamma}$ by
\[
\tilde{\gamma}:=\gamma_{|\mathcal{A}}+(\pi^{1},
P_{\Psi,\tau}\circ\pi^{1})_{\#}\bigg(\gamma_{|\overline
{\Omega}\times\overline{\Omega}\backslash\mathcal{A}}\bigg)+
(P_{-\Psi,\tau}\circ\pi^{2},\pi^{2})_{\#}\bigg
(\gamma_{|\overline{\Omega}\times\overline{\Omega}
\backslash\mathcal{A}}\bigg).
\]
Observe that $\tilde{\gamma}\in ADM(\mu,\nu)$ and
\begin{align*}
C_{\tau}(\tilde{\gamma},h)&=\int_{\overline{\Omega}\times\overline{\Omega}}\bigg(\frac{| x-y|^{2}}{2\tau}1_{(\partial\Omega\times\partial\Omega)^{c}}+\Psi(y)1_{\Omega\times\partial\Omega}-\Psi(x)1_{\partial\Omega\times\Omega}\bigg)\hspace{1mm}d\tilde{\gamma}\\
&=\int_{\mathcal{A}}\tilde{c}\hspace{1mm}d\tilde{\gamma}+
\int_{\overline{\Omega}\times\overline{\Omega}\backslash\mathcal{A}}\bigg(d_{\Psi,\tau}(x)+d_{-\Psi,\tau}(y)\bigg)\hspace{1mm}d\gamma\\
&\leq\int_{\overline{\Omega}\times\overline{\Omega}}\bigg(\frac{| x-y|^{2}}{2\tau}1_{(\partial\Omega\times\partial\Omega)^{c}}+\Psi(y)1_{\Omega\times\partial\Omega}-\Psi(x)1_{\partial\Omega\times\Omega}\bigg)\hspace{1mm}d\gamma\\
&=C_{\tau}(\gamma,h),
\end{align*}
with strict inequality if $\gamma(\overline{\Omega}\times\overline{\Omega}\backslash\mathcal{A})>0.$
Thus, from the optimality of $\gamma$, we deduce that it is
concentrated on $\mathcal{A}.$

Now we have to prove the $\tilde{c}$-cyclical 
monotonicity of $supp(\gamma)\cup\partial\Omega\times
\partial\Omega.$ Note that
\[
 C_{\tau}(\gamma,h)=C_{\tau}\big(\gamma+\mathcal{H}^{d-1}_{|\partial\Omega}\otimes\mathcal{H}^{d-1}_{|\partial\Omega},h\big).
\]
Hence, we can assume without loss of generality that $\partial\Omega\times\partial\Omega\subset supp(\gamma).$
Let $\{(x_{i},y_{i})\}_{i=1}^{n}\in supp(\gamma)$. Our
objective is to show  that
\[
\sum_{i}\tilde{c}(x_{i},y_{\sigma(i)})-\tilde{c}(x_{i},y_{i})\geq0,
\hspace{1em} {\rm {for \hspace{1mm} all  \hspace{1mm} permutations
 \hspace{1mm} }} \sigma  {\rm {\hspace{1mm} of  \hspace{1mm} }} \{1,...,n\}. \]
We proceed by contradiction. For this purpose, we  assume that  the above inequality fails for some permutation
$\sigma$. Let 
\[X_{i}=\begin{cases}
\partial\Omega\hspace{1em}\mbox{if}\hspace{1mm}  x_{i}\in\partial\Omega, &\\ 
\Omega \hspace{1em} \mbox{otherwise,}\end{cases}\]
and
\[Y_{i}=\begin{cases}
\partial\Omega\hspace{1em}\mbox{if}\hspace{1mm}  y_{i}\in\partial\Omega, &\\ 
\Omega \hspace{1em} \mbox{otherwise.}\end{cases}\]
The cost $\tilde{c}$ is continuous in $X_{i}\times Y_{i},$ for any $i$
in $\{0,...,n\}.$ Hence, we can find neighborhoods $U_{i}\subset X_{i}$ and $V_{i}\subset Y_{i}$
of $x_{i}$ and $y_{i}$ such that
\[
\sum_{i=1}^{N}\tilde{c}(u_{i},v_{\sigma(i)})-\tilde{c}(u_{i},v_{i})<0\hspace{1em}\forall(u_{i},v_{i})
\in U_{i}\times V_{i}\hspace{1em} {\rm{and}}\hspace{1em}\forall i\in \{0,...,n\}.
\]
We will build a variation of
$\gamma$, $\tilde{\gamma}=\gamma+\eta,$  in such a way that its minimality is violated.
To this aim, we need a signed measure $\eta$ with:

(A) $\eta^{-}\leq\gamma$ (so that $\tilde{\gamma}$ is non-negative);

(B) $\pi_{\#}^{1}\eta{}_{|\Omega}=\pi_{\#}^{2}\eta{}_{|\Omega}=0$ 
(so that $(\tilde{\gamma},h)$ is admissible);

(C) $\int_{\overline{\Omega}\times\overline{\Omega}}\tilde{c}(x,y)\hspace{1mm}d\eta<0$ (so that $\gamma$ is not optimal).

Let $\mathcal{C}=\Pi_{i=1}^{N}U_{i}\times V_{i}$ and $P\in\mathbb{P}(\mathcal{C})$
be defined as the product of the measures $\frac{1}{m_{i}}\gamma_{| U_{i}\times V_{i}}.$
Here, $m_{i}:=\gamma(U_{i}\times V_{i}).$ Denote by $\pi^{U_{i}}$ and $\pi^{V_{i}}$
the natural projections of $\mathcal{\mathcal{C}}$ to $U_{i}$ and
$V_{i}$ respectively. Also, define 
\[
\eta:=\frac{\min_{i}m_{i}}{N}\sum_{i=1}^{N}(\pi^{U_{i}},\pi^{V_{\sigma(i)}}\big)_{\#}P-(\pi^{U_{i}},\pi^{V_{i}}\big)_{\#}P.
\]
Since $\eta$ satisfies $(A),(B)$, and $(C)$, the $\tilde{c}-$cyclical monotonicity
is proven.\\
Next, we prove that $(ii)\implies (iii).$ \\
 Arguing as Step 2 of \cite[Theorem 1.13]{users-guide}, we can produce
a $\tilde{c}-concave$ function $\tilde{\varphi}$ such that
$supp(\gamma)\cup\partial\Omega\times\partial\Omega\subset\partial^{\tilde{c}}\tilde{\varphi}$.
Then,
\begin{multline} \label{tilde-dif}\tilde{\varphi}(x)+\tilde{\varphi}^{\tilde{c}}(y)  =\frac{| x-y|^{2}}{2\tau}1_{(\partial\Omega\times\partial\Omega)^{c}}-\Psi(x)1_{\partial\Omega\times\Omega}+\Psi(y)1_{\Omega\times\partial\Omega}\hspace{1.5mm}\forall(x,y)\in supp(\gamma)\cup\partial\Omega\times\partial\Omega\\
\hspace{5em} {\rm{and}} \hspace{1em} \tilde{\varphi}(x)+\tilde{\varphi}^{\tilde{c}}(y)  \leq\frac{| x-y|^{2}}{2\tau}1_{(\partial\Omega\times\partial\Omega)^{c}}-\Psi(x)1_{\partial\Omega\times\Omega}+\Psi(y)1_{\Omega\times\partial\Omega}\hspace{1.5mm}\forall(x,y)\in\overline{\Omega}\times\overline{\Omega}.
\end{multline}
After adding a constant, we can assume $\tilde{\varphi}^{c}(y_{0})=0$
for some $y_{0}\in\partial\Omega$. Then, using \eqref{tilde-dif}
it is easy to show that $\tilde{\varphi}=0$ on $\partial\Omega$.
Consequently, $\tilde{\varphi}^{c}=0$ on $\partial\Omega$
as well.

Set $\varphi=\tilde{\varphi}+1_{\partial\Omega}\Psi$ and $\varphi^{*}=\tilde{\varphi}^{\tilde{c}}-\Psi1_{\partial\Omega}$. Since the measure $\mu$ is strictly positive, by \eqref{tilde-dif}  we have
\[
\inf_{y\in\overline{\Omega}}c(x,y)-\varphi^{*}(y)=\varphi(x)
\hspace{1em}\forall x\in\Omega.
\]
Similarly, since $\pi_{2\#}\gamma$ is strictly positive, we have 
\[
\inf_{x\in\overline{\Omega}}c(x,y)-\varphi(x)=\varphi^{*}(y)\hspace{1em}\forall y\in\Omega.
\]
Then, all the items in $(iii)$ can be verified using \eqref{tilde-dif} (see \cite[Definitions 1.7 to 1.10]{users-guide}).\\
We proceed to prove that $(iii)\implies (i).$\\
 Let $(\tilde{\gamma},h)$ be any admissible pair. 
 We set $\tilde{\varphi}=\varphi-\Psi1_{\partial\Omega}$
and $\tilde{\varphi}^{*}=\varphi^{*}+\Psi1_{\partial\Omega}.$ 
By item $(b)$ of $(iii)$, we have that
\eqref{tilde-dif} holds with $\tilde{\varphi}^{*}$ in place of $\tilde{\varphi}^{c}$.
Moreover, from $(c)$ we get $\tilde{\varphi}_{|\partial\Omega}=
\tilde{\varphi}_{|\partial\Omega}^{*}=0.$
From $(a),$ $(b),$ and $(B2)$, we obtain  that $\tilde{\varphi}_{|\Omega}$ and $\tilde{\varphi}_{|\Omega}^{*}$
are Lipschitz. Thus, they are integrable against any measure with finite
mass. As a consequence of these observations, we deduce
\begin{align*}\begin{aligned}C_{\tau}(\gamma,h) & =\int_{\overline{\Omega}\times\overline{\Omega}}\tilde{c}\hspace{1mm}d\gamma+\tau\int_{\Omega}e(h)\hspace{1mm}dx\\
 & =\int_{\overline{\Omega}\times\overline{\Omega}}\bigg(\tilde{\varphi}(x)+\tilde{\varphi}^{*}(y)\bigg)\hspace{1mm}d\gamma+\tau\int_{\Omega}e(h)\hspace{1mm}dx\\
 & =\int_{\Omega}\tilde{\varphi}(x)\hspace{1mm}d\mu+\int_{\Omega}\tilde{\varphi}^{*}(y)\hspace{1mm}d\nu+\tau\int_{\Omega}\tilde{\varphi}^{*}(y)\hspace{1mm}dh+\int_{\Omega}e(h)\hspace{1mm}dx\\
 & =\int_{\overline{\Omega}\times\overline{\Omega}}\bigg(\tilde{\varphi}(x)+\tilde{\varphi}^{*}(y)\bigg)\hspace{1mm}d\tilde{\gamma}+\tau\int_{\Omega}e(h)\hspace{1mm}dx\\
 & \leq\int_{\overline{\Omega}\times\overline{\Omega}}\tilde{c}\hspace{1mm}d\tilde{\gamma}+\tau\int_{\Omega}e(h)\hspace{1mm}dx\\
 & = C_{\tau}(\tilde{\gamma},h).
\end{aligned}
\end{align*}
In the third and fourth line above, we have used \eqref{interior-mass}. This gives us the desired implication.\\
To prove the last part of the Proposition, we suppose the pair $(\gamma,h)$
is optimal. Also, we claim that there exists a set $L\subset\overline{\Omega}$ of zero Lebesgue
measure such that for every $x$ in $\overline{\Omega}\backslash L$
there exists $y\in\overline{\Omega}\backslash L$ such that $(x,y)\in supp(\gamma)\cup\partial\Omega\times\partial\Omega$
and 
\begin{multline}
e^{\prime}\circ h(\tilde{y})1_{\Omega}(\tilde{y})+\frac{| x-\tilde{y}|^{2}}{2\tau}1_{(\partial\Omega\times\partial\Omega)^{c}}(x,\tilde{y})+\Psi(\tilde{y})1_{\Omega\times\partial\Omega}(x,\tilde{y})-\Psi(x)1_{\partial\Omega\times\Omega}(x,\tilde{y})\\
   \hspace{4em} \geq\label{subdifrencial}e^{\prime}\circ h(y)1_{\Omega}(y)+\frac{| x-y|^{2}}{2\tau}1_{(\partial\Omega\times\partial\Omega)^{c}}(x,y)+\Psi(y)1_{\Omega\times\partial\Omega}(x,y)-\Psi(x)1_{\partial\Omega\times\Omega}(x,y),
\end{multline}
holds for every $\tilde{y}$ in $\overline{\Omega}\backslash L$. We
also claim that this set $L$ can be taken such that for every $y$
in $\overline{\Omega}\backslash L$ there exists $x$ in $\overline{\Omega}\backslash L$
so that $(x,y)\in supp(\gamma)\cup\partial\Omega\times\partial\Omega$ and the above inequality holds for
almost every $\tilde{y}$ in $\overline{\Omega}\backslash L$. We
will show these claims at the end of the proof. Now, we show how the
result follows from them. Define the function
\[
(-e^{\prime}\circ h)^{\tilde{c}}(x)=
\inf_{y\in\overline{\Omega}\backslash L}\frac{| x-y|^{2}}{2\tau}1_{(\partial\Omega\times\partial\Omega)^{c}}+\Psi(y)1_{\Omega\times\partial\Omega}(x,y)-\Psi(x)1_{\partial\Omega\times\Omega}(x,y)+e^{\prime}\circ h(y)1_{\Omega}(y).
\]
for every $x$ in $\overline{\Omega}\backslash L.$
By \eqref{subdifrencial} for every $x$
in $\overline{\Omega}\backslash L$ there exists $y$ in $\overline{\Omega}\backslash L$
such that $(x,y)\in supp(\gamma)\cup\partial\Omega,$
\begin{multline*}
(-e^{\prime}\circ h(y)1_{\Omega}(y))+(-e^{\prime}\circ h)^{\tilde{c}}(x)=\Psi(y)1_{\Omega\times\partial\Omega}(x,y)-\Psi(x)1_{\partial\Omega\times\Omega}(x,y)+\frac{|x-y|^{2}}{2\tau}1_{(\partial\Omega\times\partial\Omega)^{c}},\\
{\rm{and}}\hspace{1em}
(-e^{\prime}\circ h(y)1_{\Omega}(y))+(-e^{\prime}\circ h)^{\tilde{c}}(\tilde{x})\leq\Psi(y)1_{\Omega\times\partial\Omega}(\tilde{x},y)-\Psi(\tilde{x})1_{\partial\Omega\times\Omega}(\tilde{x},y)\\
+\frac{|x-y|^{2}}{2\tau}1_{(\partial\Omega\times\partial\Omega)^{c}},
\end{multline*}
for almost every $\tilde{x}$ in $\overline{\Omega}\backslash L.$
Then, we have that
\[
(-e^{\prime}\circ h(y)1_{\Omega}(y)=\inf_{x\in\overline{\Omega}\backslash L}\Psi(y)1_{\Omega\times\partial\Omega}(x,y)-\Psi(x)1_{\partial\Omega\times\Omega}(x,y)+\frac{|x-y|^{2}}{2\tau}1_{(\partial\Omega\times\partial\Omega)^{c}}-(-e^{\prime}\circ h)^{\tilde{c}}(x).
\]Thus, it follows that the functions $-e^{\prime}\circ h_{|\Omega\backslash L}$
admits a Lipschitz extension to $\Omega$, which we will not relabel.
Consequently, for every $y$ in $\Omega\backslash L$ there exists $x\in\overline{\Omega}\backslash L$
such that $(x,y)\in supp(\gamma)$
and \eqref{subdifrencial} holds for every
$\tilde{y}\in\Omega.$ Then  by \eqref{subdifrencial}, for every $y$ in $\Omega\backslash L$
there exists $x\in\overline{\Omega}$ and a constant $A:=A(x,y)$ such
that $(x,y)$ is in $supp(\gamma)$ and 
\begin{equation}\label{one}
\tau e^{\prime}\circ h(y)+\frac{|y|^{2}}{2}+\langle x,\tilde{y}-y\rangle+A(x,y)\leq\frac{|\tilde{y}|^{2}}{2}+\tau e^{\prime}\circ\tilde{h}(\tilde{y}),
\end{equation}
for every $\tilde{y}\in\Omega.$ Let $\mathcal{P}$ be the set of
affine functions that are below $\tau e^{\prime}\circ h(y)+\frac{|y|^{2}}{2}$
in $\Omega.$ 
Then, it follows that
\[
\tau e^{\prime}\circ h+\frac{|y|^{2}}{2}=\sup_{p\in\mathcal{P}}p(y),
\]
for every $y$ in $\Omega\backslash L.$ This together with the Lipschitz
continuity of $-e^{\prime}\circ h_{|\Omega}$ implies that the
function $\tau e^{\prime}\circ h(y)+\frac{|y|^{2}}{2}$ is convex.
In a similar way from \eqref{tilde-dif} we can deduce
that $\varphi_{|\Omega}^{*}$ is Lipschitz, $\frac{|y|^{2}}{2}-\tau\varphi^{*}(y)$ is convex, and for a.e $y$
in $\Omega$ there exists a point $x\in\overline{\Omega}$ and a constant $B:=B(x,y)$
such that $(x,y)\in supp(\gamma)$ and 
\begin{equation}\label{two}
-\tau\varphi^{*}(y)+\frac{|y|^{2}}{2}+\langle x,\tilde{y}-y\rangle+B(x,y)\leq\frac{|\tilde{y}|^{2}}{2}+\tau\varphi^{*}(\tilde{y}),
\end{equation}
for every $\tilde{y}\in\Omega.$
Recall $\nu+\tau h$ is absolutely continuous and uniformly
bounded from below. Consequently, by Lemma \ref{cre-mass}
$\gamma_{\overline{\Omega}}^{\Omega}=(S,Id)_{\#}\nu+\tau h,$
for a map $S$ that is optimal in the classical sense and is uniquely
defined a.e. Thus, it follows from \eqref{one} and \eqref{two} that $\frac{|y|^{2}}{2}-\tau\varphi^{*}(y)$
and $\tau e^{\prime}\circ h(y)+\frac{|y|^{2}}{2}$ are Lipschitz,
and have the same derivative
a.e in $\Omega$. Therefore, we deduce that there exists a
constant $\kappa$ such that
\[
\varphi^{*}=-e^{\prime}\circ h+\kappa\hspace{1em}a.e\hspace{1em}in\hspace{1em}\Omega.
\]
In order to prove the opposite implication, suppose $\tilde{\varphi}^{*}=-e^{\prime}\circ h+\kappa$
and let $(\tilde{\gamma},\tilde{h})\in ADM(\mu,\nu)$. When we argue
as in $(iii)\implies (i),$ we obtain 
\begin{align*}
  C_{\tau}(\gamma,h) & =\int_{\overline{\Omega}\times\overline{\Omega}}\tilde{c}\hspace{1mm}d\gamma+\tau\int_{\Omega}e(h)\hspace{1mm}dx\\
 & =\int_{\overline{\Omega}\times\overline{\Omega}}\bigg([\tilde{\varphi}(x)+\kappa]+[\tilde{\varphi}^{*}(y)-\kappa]\bigg)\hspace{1mm}d\gamma+\tau\int_{\Omega}e(h)\hspace{1mm}dx\\
 & =\int_{\Omega}[\tilde{\varphi}(x)+\kappa]\hspace{1mm}d\mu+\int_{\Omega}[\tilde{\varphi}^{*}(y)-\kappa]\hspace{1mm}d\nu+\tau\int_{\Omega}[\tilde{\varphi}^{*}(y)-\kappa]h\hspace{1mm}dx+\tau\int_{\Omega}e(h)\hspace{1mm}dx\\
 & =\int_{\overline{\Omega}\times\overline{\Omega}}\bigg(\tilde{\varphi}(x)+\tilde{\varphi}^{*}(y)\bigg)\hspace{1mm}d\tilde{\gamma}+\tau\int_{\Omega}[\tilde{\varphi}^{*}(y)-\kappa](h-\tilde{h})\hspace{1mm}dx+\tau\int_{\Omega}e(h)\hspace{1mm}dx\\
 &\leq\int_{\overline{\Omega}\times\overline{\Omega}}\tilde{c}\hspace{1mm}d\tilde{\gamma}+\tau \int_{\Omega}e(h)\hspace{1mm}dx+\tau\int_{\Omega}e^{\prime}\circ h(\tilde{h}-h)\hspace{1mm}dx\\
 &\leq\int_{\overline{\Omega}\times\overline{\Omega}}\tilde{c}\hspace{1mm}d\tilde{\gamma}+\tau\int_{\Omega}e(\tilde{h})\hspace{1mm}dx.
\end{align*} 
Here, in the last inequality we used (C2). 
This completes the proof of the Theorem, provided we can prove the claim.

Finally, we show \eqref{subdifrencial}. The idea is to use 
Proposition \ref{perturba}
and the absolute continuity and uniform positivity of $\mu$ and
$\nu+\tau h.$ We only prove the statement holds for $x\in\overline{\Omega}\backslash L$; the corresponding statement for $y$ is analogous. In order
to do this we will use the same notation as in 
Proposition \ref{perturba}. 

Let $L_{1}$
be a set of zero Lebesgue measure such that every point in $\Omega\backslash L_{1}$
is a Lebesgue point for $S,$ $\nu+\tau h,$ and $h.$ Also let $L_{2}$
be a set of zero Lebesgue measure such that every point in 
$\Omega\backslash L_{2}$
is a Lebesgue point for $T$ and the density of $\mu.$ 
Let $A=\{y\in\Omega\backslash L_{1}\hspace{1mm}:\hspace{1mm}S(y)\in\Omega\}$
and $B=\{x\in\Omega\backslash L_{2}\hspace{1mm}:\hspace{1mm}T(x)\in\partial\Omega\}.$
Since $\pi_{1\#}(\gamma_{\Omega}^{\Omega}+\gamma_{\Omega}^{\partial\Omega})=\mu$
and $\nu+\tau h$ and $\mu$ are absolutely continuous and uniformly
positive, it follows that $L_{3}=\overline{\Omega}\backslash (S(A)\cup T(B))$ has
zero Lebesgue measure. Set $L=L_{1}\cup L_{2}\cup L_{3}$.
Then, for every $x\in\Omega\backslash L$ we have two possibilities:
Either there exists $y\in\Omega\backslash L$ such that $x=S(y),$
in which cases the claim follows from \eqref{eq:-14} and \eqref{eq:-16},
or $T(x)\in\partial\Omega,$
in which case the claim follows from
\eqref{eq:-25} and \eqref{eq:1-5-1}. It remains to consider
the case when $x\in\partial\Omega \backslash L$. In such case 
the statement follows from \eqref{eq:-16-1} and \eqref{37}. This concludes
the proof of the Proposition.
\end{proof}
 The following result is the analogue in our setting
of Brenier's Theorem on existence and uniqueness of optimal transport
maps. \begin{corollary}\textbf{ (On uniqueness of optimal pairs)}
Let $\mu,\nu\in \mathcal{M}(\Omega)$ and fix $(\gamma,h)\in Opt(\mu,\nu)$ 
satisfying the hypotheses of the previous Proposition.  Additionally, let $\varphi$
and $\varphi^{*}$ be the functions given by Proposition \ref{phi}. Then
\begin{enumerate}[(i)]
\item The function $h$ is unique $\mathcal{L}^{d}$ a.e.
\item The plan $\gamma_{\Omega}^{\overline{\Omega}}$
is unique and it is given by $(Id,T)_{\#}\mu.$ Also, $T\hspace{1mm}:\hspace{1mm}\Omega\rightarrow\overline{\Omega}$
is the gradient of a convex function and
\[
 -\nabla \varphi = \frac{T-Id}{\tau}\hspace{1em}{\rm{a.e.}}
 \hspace{3mm}{\rm{in}} \hspace{1em}\Omega.
\]
\item The plan $\gamma_{\overline{\Omega}}^{\Omega}$
is unique and it is given by $(S,Id)_{\#}\nu.$ Also, $S\hspace{1mm}:\hspace{1mm}\Omega\rightarrow\overline{\Omega}$ is
the gradient of a convex function and 
\[
 -\nabla \varphi^{*} = \frac{S-Id}{\tau}\hspace{1em}
 {\rm{a.e.}}
 \hspace{3mm}{\rm{in}} \hspace{1em}\Omega.
\]
\item If $\gamma$ has no mass concentrated on 
$\partial\Omega\times\partial\Omega,$ then 
$\gamma$ is unique. 
\end{enumerate}
\label{uniqueness} \begin{proof} By linearity of the constraint \eqref{interior-mass} in $ADM(\mu,\nu)$, the uniqueness of $h$ follows by (C2). Due to the equivalence $(i)\iff(iii)$
of the previous Theorem, using $(a)$ and $(b)$ we get that the functions
$\tau\varphi$ and $\tau\varphi^{*}$ are $\frac{d^{2}}{2}-concave$. Here, $\hspace{1mm}d(x,y)=| x -y|.$
Thus, the result follows exactly as in the classical transportation
problem (see for example \cite[Theorem 6.2.4 and Remark 6.2.11]{Ambrosio-Gigli-Savare}).
\end{proof} \end{corollary}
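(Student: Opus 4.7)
My plan is to reduce each claim, via the characterization in Proposition~\ref{phi}, either to a strict convexity argument or to the classical Brenier/Rockafellar theory for quadratic costs. For~(i), note that $ADM(\mu,\nu)$ is convex in $(\gamma,h)$ because the constraint~\eqref{interior-mass} is linear. The part of $C_\tau$ that depends on $\gamma$ is itself linear, while $\int_\Omega e(h)\,dx$ is strictly convex in $h$ by~(C2). Hence interpolating two optimal pairs and comparing costs forces $h$ to be unique $\mathcal{L}^d$-a.e.

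For~(ii) and~(iii), I would exploit items~(a) and~(b) of Proposition~\ref{phi}. The $c_1$-concavity of $\varphi_{|\Omega}$ is equivalent to $\Phi(x):=\tfrac{|x|^2}{2}-\tau\varphi(x)$ being (the restriction to $\Omega$ of) a convex function, and the inclusion $supp(\gamma_\Omega^{\overline{\Omega}})\subset\partial^{c_1}\varphi$ translates into: for every $(x,y)$ in that support, $y\in\partial\Phi(x)$. Since $\mu\ll\mathcal{L}^d$ and $\Phi$ is Lipschitz on $\Omega$ (by~(a) together with~(B2)), $\Phi$ is differentiable $\mu$-a.e. Setting $T:=\nabla\Phi$, the plan $\gamma_\Omega^{\overline{\Omega}}$ is concentrated on the graph of $T$, so $\gamma_\Omega^{\overline{\Omega}}=(Id,T)_\#\mu$, and the identity $T-Id=-\tau\nabla\varphi$ is immediate. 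Uniqueness of $T$ is then the standard Brenier uniqueness argument (cf.~\cite[Theorem~6.2.4]{Ambrosio-Gigli-Savare}). The proof of~(iii) is entirely symmetric, with $\varphi^{*}$, $\nu+\tau h$, and $c_2$ in place of $\varphi$, $\mu$, and $c_1$; the absolute continuity and strict positivity of $\nu+\tau h$ are precisely the hypotheses inherited from Proposition~\ref{phi}.

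For~(iv), decompose $\gamma=\gamma_\Omega^{\overline{\Omega}}+\gamma_{\partial\Omega}^\Omega+\gamma_{\partial\Omega}^{\partial\Omega}$. The last term vanishes by hypothesis, the first is uniquely determined by~(ii), and $\gamma_{\partial\Omega}^\Omega=\gamma_{\overline{\Omega}}^\Omega-\gamma_\Omega^\Omega$ is then determined as well by~(iii), using that $\gamma_\Omega^\Omega$ is the common restriction of $\gamma_\Omega^{\overline{\Omega}}$ and $\gamma_{\overline{\Omega}}^\Omega$ to $\Omega\times\Omega$. The step I expect to be most delicate is verifying, in~(ii) and~(iii), that the $c_1$-concave (resp.\ $c_2$-concave) potentials defined on $\Omega$ produce genuine convex extensions whose gradient maps land in $\overline{\Omega}$, so that Brenier's theorem applies despite our target being $\overline{\Omega}$ rather than $\mathbb{R}^d$; this should follow from the standard $c$-concave extension procedure combined with the Lipschitz control on $\varphi_{|\Omega}$ and $\varphi^{*}_{|\Omega}$ given by item~(a) of Proposition~\ref{phi}.
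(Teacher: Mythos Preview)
Your proposal is correct and follows essentially the same route as the paper: uniqueness of $h$ via strict convexity of $e$ and linearity of the constraint, and then the reduction of (ii)--(iv) to classical Brenier theory by reading items~(a) and~(b) of Proposition~\ref{phi} as saying that $\tau\varphi$ and $\tau\varphi^{*}$ are $\tfrac{d^2}{2}$-concave. The paper simply cites \cite[Theorem~6.2.4 and Remark~6.2.11]{Ambrosio-Gigli-Savare} at that point, whereas you spell out the translation $\Phi=\tfrac{|\cdot|^2}{2}-\tau\varphi$ and the decomposition argument for~(iv) explicitly; your concern about the target being $\overline{\Omega}$ is handled in the reference the paper invokes.
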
 
Henceforth we will assume, without loss of generality, that the transportation plans $\gamma$ 
have no mass concentrated on $\partial\Omega\times
\partial\Omega$. 
\section{The weak solution }
In this section we follow the minimizing movement scheme described in the introduction.
This method yields
a map, $t\rightarrow\rho(t),$ that belongs to $L_{loc}^{2}([0,\infty),W^{1,2}(\Omega))$.
Such a map is a weak solution to (\ref{T2}). By this, we mean
that the
map $t\rightarrow\rho(t)-e^{\Psi-V}$ belongs to $L_{loc}^{2}([0,\infty),W_{0}^{1,2}(\Omega))$, 
\[
 \rho(0)=\rho_{0}\hspace{1em}{\rm{in}}\hspace{1em}\Omega,
\]
and
\[
\int_{\Omega}\zeta \rho(s)\hspace{1mm} \hspace{1mm}dx-\int_{\Omega}\zeta \rho(t)\hspace{1mm}
\hspace{1mm}dx=\int_{t}^{s}\bigg(\int_{\Omega}\big[\Delta\zeta-
\langle \nabla V , \nabla\zeta \rangle \big]\rho(r)\hspace{1mm}dx-\int_{\Omega}\zeta[e_{x}^{\prime}]^{-1}\big(\log(\rho(r))+V\big)\hspace{1mm}dx\bigg)dr,\hspace{2mm}
\]
for all $0\leq t<s$ 
and $\zeta$ in $C_{c}^{\infty}(\Omega)$. 
\\
Similarly, we will say that a map $t\rightarrow\rho(t)$ in $L_{loc}^{2}([0,\infty),W^{1,2}(\Omega))$
is a weak solution of \eqref{Dirichlet}, if there exists a Lipschitz function $\tilde{\rho}$ such that
$t\rightarrow\rho(t)-\tilde{\rho}$ belongs to $L_{loc}^{2}([0,\infty),W_{0}^{1,2}(\Omega)),$
\[
\tilde{\rho} =\rho_{D}\hspace{1em}{\rm{on}}\hspace{1em}\partial\Omega,  
\]
\[
 \rho(0)=\rho_{0}\hspace{1em}{\rm{in}}\hspace{1em}\Omega,
\]
and
\[
\int_{\Omega}\zeta \rho(s)\hspace{1mm} \hspace{1mm}dx-\int_{\Omega}\zeta \rho(t)\hspace{1mm}
\hspace{1mm}dx=\int_{t}^{s}\bigg(\int_{\Omega}\big[\Delta\zeta-
\langle \nabla V , \nabla\zeta \rangle \big]\rho(r)\hspace{1mm}dx-\int_{\Omega}\zeta F_{x}^{\prime}(\rho(r))\hspace{1mm}dx\bigg)dr,\hspace{2mm}
\]
for all $0\leq t<s$ and $\zeta$ in $ C_{c}^{\infty}(\Omega)$.
\[
E(\mu):=\begin{cases}
 & \int_{\Omega}\mathcal{E}(\rho(x),x)\hspace{1mm}dx\hspace{1em}if\hspace{1em}\mu=\rho \hspace{1mm}
 \mathcal{L}_{|\Omega,}^{d}\\
 & +\infty\hspace{1em}otherwise,
\end{cases}
\]
where $\mathcal{E}:[0,\infty)\times{\Omega}\rightarrow[0,\infty)$ is given by
\[
\mathcal{E}(z,x):=z\log z-z+V(x)z+1.
\]
We will denote by $\mathcal{E^\prime}$ the derivative of $\mathcal{E}$ with respect
to its first variable and by $D(\mathcal{E})$ the interior of the sets of 
points where $\mathcal{E}$ is finite. The notations $\mathcal{E}(\rho(x),x)$
and  $\mathcal{E}(\rho)$ will be used interchangeably. 
Also, we will freely interchange $\mathcal{E^{\prime}}(\rho(x),x)$ and
$\mathcal{E^{\prime}}(\rho).$ 

The main result is the following:
\begin{theorem} With the notation from the introduction and assumptions 
(B1) and (B2), for any pair of functions $e$ and $\Psi$ satisfying (C1)-(C9), any uniformly positive and bounded initial data $\rho_{0},$ 
and any sequence $\tau_{k}\downarrow0$ there exists a subsequence, not relabeled, such that $\rho^{\tau_{k}}(t)$
converges to $\rho(t)$ in $L^{2}(0;t_{f},L_{loc}^{2}(\Omega)),$ for
any $t_{f}>0.$                            
The map $t\rightarrow\rho(t)$ belongs to $L_{loc}^{2}([0,\infty),W^{1,2}(\Omega))$
and is a weak solution of \eqref{T2}. Moreover, there exist positive constants
$\lambda$ and $\Lambda$ such that
\begin{equation}\label{Z3}
\frac{\lambda}{\sup\{e^{-V}\}}e^{-(C_{0}t+V)}\leq \rho(x,t)\leq \frac{\Lambda}{\inf\{e^{-V}\}}e^{-V},
\end{equation}
for almost every  $x$. 
\label{equation} \end{theorem}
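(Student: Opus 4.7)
The plan is to follow the standard pipeline for variational schemes in optimal transportation: (1) establish well-posedness of each step of the scheme; (2) derive Euler--Lagrange conditions at each step that encode a discrete PDE; (3) obtain uniform estimates; (4) pass to the limit and verify both the weak formulation and the boundary condition. I will rely heavily on Proposition \ref{phi} and Corollary \ref{uniqueness} to characterize minimizers of the inner transportation problem, and on the pointwise bounds given by Proposition \ref{bar} to get the sandwich estimate \eqref{Z3}.

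First, I would show that for each $\tau>0$ and each initial density, the map $\rho \mapsto E^{\tau}[\rho|\rho_n^{\tau}]$ admits a minimizer. Existence at the inner level (the transportation cost) is Lemma \ref{exists}; for the outer functional I would use (C3) and standard lower semicontinuity of entropy along with Proposition \ref{phi}(iii)(c), noting that the boundary trace $\Psi$ forces an effective "penalty" from mass exchange with $\partial\Omega$. Given a minimizer $\rho_{n+1}^{\tau}$, the optimality condition combined with Proposition \ref{phi} yields a Kantorovich potential $\varphi_{n+1}^{\tau}$ satisfying $\varphi_{n+1}^{\tau}=\Psi$ on $\partial\Omega$ and, by perturbation of $\rho$ inside $\Omega$, the identity $\log \rho_{n+1}^{\tau}+V = -\varphi_{n+1}^{\tau}$ in $\Omega$ (up to an additive constant absorbed in normalization). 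Combined with the relation $\varphi^{*}=-e'\circ h + \kappa$ from Proposition \ref{phi}, this fixes $h=[e_x']^{-1}(\log\rho+V)$, i.e., the reaction term has been generated by the cost.

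Second, plugging in an arbitrary test function $\zeta\in C_{c}^{\infty}(\Omega)$ and using the transport map $T_{n+1}^{\tau}$ from Corollary \ref{uniqueness}, I would Taylor expand $\zeta(T_{n+1}^{\tau}(x))-\zeta(x) = \tau \langle \nabla\zeta,-\nabla\varphi_{n+1}^{\tau}\rangle + \tfrac12 \tau^2 D^2\zeta(\ldots) + O(\tau^3)$, and use the constraint $\pi_{2\#}\gamma_{\overline\Omega}^{\Omega}=\rho\,dx+\tau h\,dx$. Substituting the identifications $-\nabla\varphi_{n+1}^{\tau}=\nabla(\log\rho_{n+1}^{\tau}+V)$ in $\Omega$ and $h=[e_x']^{-1}(\log\rho+V)$, one obtains a discrete weak equation
\begin{equation*}
\int_{\Omega}\zeta(\rho_{n+1}^{\tau}-\rho_{n}^{\tau})\,dx = \tau\int_{\Omega}\big(\Delta\zeta-\langle\nabla V,\nabla\zeta\rangle\big)\rho_{n+1}^{\tau}\,dx - \tau\int_{\Omega}\zeta\,[e_x']^{-1}(\log\rho_{n+1}^{\tau}+V)\,dx + O(\tau^2).
\end{equation*}
Summing in $n$ and using $\rho^{\tau}(t)=\rho_{[t/\tau]}^{\tau}$ gives the desired weak formulation in discretized form.

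Third, the compactness step. The sandwich bound \eqref{Z3} — which I invoke from Proposition \ref{bar} — plus telescoping of the entropy inequality $E(\rho_{n+1}^{\tau})+Wb_{2}^{e,\Psi,\tau}(\rho_n^{\tau},\rho_{n+1}^{\tau})\leq E(\rho_n^{\tau})$ yields uniform control of $\sum_n Wb_{2}^{e,\Psi,\tau}(\rho_n^{\tau},\rho_{n+1}^{\tau})$. Using Proposition \ref{phi} once more to convert this into a bound on $\tau\sum_n \|\nabla\log\rho_{n+1}^{\tau}\|_{L^{2}(\rho_{n+1}^{\tau})}^{2}$ combined with the uniform lower bound on $\rho_{n+1}^{\tau}$ gives a uniform $L^{2}_{\mathrm{loc}}([0,\infty);W^{1,2}(\Omega))$ bound. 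An Aubin--Lions/refined Ascoli argument (using the discrete time derivative bound coming directly from the Wasserstein-type cost) then provides strong $L^{2}$ convergence of a subsequence $\rho^{\tau_{k}}$, which is enough to pass to the limit in the weak formulation since $[e_x']^{-1}(\log\cdot+V)$ is continuous in its first argument on the compact range given by \eqref{Z3}.

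The main obstacle I anticipate is the \textbf{boundary condition}: passing from the discrete identity $\varphi_{n+1}^{\tau}=\Psi$ on $\partial\Omega$ and $\log\rho_{n+1}^{\tau}+V=-\varphi_{n+1}^{\tau}+\kappa_{n+1}$ inside $\Omega$ to the pointwise trace identity $\rho=e^{\Psi-V}$ on $\partial\Omega$. The additive constant $\kappa_{n+1}$ is not immediately controlled, and the boundary value is only accessed indirectly through mass transfer to $\partial\Omega$ encoded in $\gamma_{\Omega}^{\partial\Omega}$. My plan is to argue that along the scheme, the Lipschitz regularity of $\varphi^{\tau}$ (inherited from Proposition \ref{phi}(iii)(a) and \eqref{Z3}) together with (B2) (interior ball condition) forces $\kappa_{n+1}\to 0$ and extends $\log\rho(t,\cdot)+V$ continuously to $-\Psi$ on $\partial\Omega$; equivalently, I will show $\rho(t,\cdot)-e^{\Psi-V}\in W_{0}^{1,2}(\Omega)$ by using test plans concentrated near $\partial\Omega$ and the explicit form of $\Psi$ in (C1). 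Establishing this rigorously — likely via the construction of competitor plans that transport only a thin boundary layer — is where the delicate work of Section 5 must be invoked, and is the main technical heart of the argument.
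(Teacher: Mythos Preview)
Your overall architecture matches the paper's: existence of each step, Euler--Lagrange identification $e'\circ h=\log\rho_\tau+V$ and $(T-\mathrm{Id})/\tau=\nabla(\log\rho_\tau+V)$, a discrete weak equation via Taylor expansion, Aubin--Lions compactness, and passage to the limit. Two points, however, are genuine gaps rather than details.

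\medskip
\textbf{The telescoping energy is not $E$ alone.} Your inequality $E(\rho_{n+1}^\tau)+Wb_2^{e,\Psi,\tau}(\rho_n^\tau,\rho_{n+1}^\tau)\le E(\rho_n^\tau)$ is correct, but it does \emph{not} bound $\int |x-y|^2/(2\tau)\,d\gamma$ (hence not $\tau\|\nabla\log\rho_{n+1}^\tau\|_{L^2(\rho)}^2$), because $Wb_2^{e,\Psi,\tau}$ contains the signed boundary terms $\int(\Psi\,1_{\Omega\times\partial\Omega}-\Psi\,1_{\partial\Omega\times\Omega})\,d\gamma$, which can be arbitrarily negative. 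The paper handles this by first estimating those boundary terms against $\int_\Omega\Psi\,d\mu_n-\int_\Omega\Psi\,d\mu_{n+1}$ plus errors (Lemma \ref{cost bound}), and only then obtaining the quadratic bound (Proposition \ref{inequality2}):
\[
\int\frac{|x-y|^2}{2\tau}\,d\gamma_n^\tau\le C\Big(E(\rho_n^\tau)-\!\int\Psi\rho_n^\tau-E(\rho_{n+1}^\tau)+\!\int\Psi\rho_{n+1}^\tau+\tau\Big).
\]
It is $E(\cdot)-\int\Psi\,\cdot$ that telescopes, not $E$ alone. Without this correction your $L^2_{\mathrm{loc}}(W^{1,2})$ bound and your $O(\tau^2)$ remainder estimate both fail; the actual per-step remainder is controlled by $\tau^{3/2}$ plus $\tau$ times this modified energy drop.

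\medskip
\textbf{The constant $\kappa$ is a red herring; the trace comes from a direct competitor.} The relation $\varphi^*=-e'\circ h+\kappa$ in Proposition \ref{phi} concerns only the \emph{inner} transport problem. The \emph{outer} Euler--Lagrange (perturbing $\rho_\tau$ itself, keeping $(\gamma,h+\varepsilon\eta)$ admissible) gives $e'\circ h=\log\rho_\tau+V$ with no additive constant, so there is nothing to send to zero. The boundary condition is instead obtained by a direct comparison: sending mass between $y\in\Omega$ and $P_{-\Psi,\tau}(y)\in\partial\Omega$ yields the two-sided bound \eqref{eq:-3}, and the interior ball condition (via the projection estimate, Lemma \ref{projection}) plus the transportation bound $|y-S(y)|\le C\sqrt\tau$ (Lemma \ref{bound}) converts this into $\|Tr[\rho_\tau]-e^{\Psi-V}\|_{L^\infty(\partial\Omega)}\le C\sqrt\tau$. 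This is Proposition \ref{step}(iv); weak $W^{1,2}$ convergence along a subsequence then gives $\rho(t)-e^{\Psi-V}\in W_0^{1,2}(\Omega)$. Your proposed route through Lipschitz extension of $\varphi^\tau$ and control of $\kappa_{n+1}$ is not the mechanism and, as stated, would not close.
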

\begin{remark}
When assumptions (B1)-(B3) and (F1)-(F7) hold, and  $e$ and $\Psi$
are as in \eqref{Z1} and \eqref{Z2}, properties (C1)-(C9) hold as well and the map $t\rightarrow\rho(t)$
given by the previous Theorem is a weak solution of \eqref{Dirichlet}.
\end{remark}
The proof of Theorem \ref{equation} is involved. We begin with a technical result.
\begin{proposition}\textbf{ (A step of the minimizing movement)}
Let $\mu$ be a measure in $\mathcal{{M}}(\Omega)$ with the property
that $E(\mu)<\infty$. Also,
assume that its density is uniformly positive and bounded. Additionally, let
$\tau$ be a positive number.
Then, there exists a minimum $\mu_{\tau}\in\mathcal{{M}}(\Omega)$
of 
\begin{equation}
\rho\rightarrow E(\rho)+Wb_{2}^{e,\Psi,\tau}(\mu,\rho).\label{minmov}
\end{equation}
Moreover, there exists $\delta>0$ such that 
if $\tau<\delta,$ then the corresponding optimal pair 
$(\gamma,h)\in ADM(\mu,\mu_{\tau})$ satisfies:
\begin{enumerate}[(i)]
\item $\mu_{\tau}=\rho_{\tau}\hspace{1mm}
\mathcal{L}_{|\Omega}^{d}.$
\item $e^\prime\circ h=\log\rho_{\tau}+ V.$
\item The restriction of $\gamma$ to $\overline{\Omega}\times\Omega$  
is given by $(T,Id)_{\#}\mu_{\tau}.$ The map $T$ satisfies
\begin{equation}
\frac{T(y)-y}{\tau}=\nabla\log\rho_{\tau}(y)+\nabla V(y),\hspace{1em}\mathcal{L}^{d}-a.e.x.\label{eq:-9}
\end{equation}
\item $\rho_{\tau}\in W^{1,2}(\Omega)$ and $||Tr\hspace{1mm}[\rho_{\tau}]-e^{\Psi-V}||_{L^{\infty}(\partial\Omega)}\leq C\sqrt{\tau}$. 

\end{enumerate}
Here, $C$ is a positive constant that depends only on $\Psi$. Also,
$Tr\hspace{1mm}:\hspace{1mm}W^{1,2}(\Omega)\rightarrow L^{2}(\partial\Omega)$
denotes the trace operator. \label{step} \begin{proof} 
Consider a minimizing sequence of measures 
$\{ \rho^{n} \}_{n=1}^{\infty},$ with corresponding optimal pairs 
$\{(\gamma^{n},h^{n})\}_{n=1}^{\infty}$ in $ADM(\mu, \rho^{n}).$
We claim such sequences of measures and optimal pairs have the property that the  mass the elements of 
$\{(\rho^{n},\gamma^{n})\}_{n=1}^{\infty}$ and the norm in $L^{1}(\Omega)$ of the members of $\{h^{n}\}_{n=1}^{\infty}$
are uniformly  bounded. Since $\Omega$ is bounded, the claim allows
us to obtain compactness and produce subsequences
weakly converging to $\gamma,$ $h,$ and $\rho_{\tau}$ .
The previous convergence takes place as described 
in the proof of Lemma \ref{exists}. We will not relabel these subsequences. The absolute
continuity of $h$
and $\rho_{\tau}$ is guaranteed by
the superlinearity of $e$ and $\mathcal{E}$.\\
The inequality
\[
\liminf_{n\rightarrow \infty}E(\rho^{n})\geq E(\rho_{\tau}),
\]
is a consequence of the weak conergence, 
$\rho_{n} \rightharpoonup \rho,$  and
the convexity  and superlinearity of  the maps 
$\{ r \rightarrow \mathcal{E} (r,x) \}_{x\in\overline{\Omega}}$ 
(See \cite[Lemma 9.4.5]{Ambrosio-Gigli-Savare}, for example). 
To show
\[
\liminf_{n\rightarrow \infty }C_{\tau}(\gamma^{n},h^{n})
\geq C_{\tau}(\gamma,h),
\]
and $(\gamma,h)\in ADM(\mu,\rho_{\tau}),$ we argue as in Lemma \ref{exists}. This  gives us the existence of a minimum as well as item $(i),$ assuming we can prove the
claim.\\
 Next, we show the claim. Arguing as in Lemma \ref{exists} and using Jensen inequality
 we obtain
\begin{multline*}
\int_{\Omega}\mathcal{E}(\rho)\hspace{1mm}dx+C_{\tau}(\gamma,h) \geq-||\Psi||_{\infty}
\bigg(\mu(\Omega)+\nu(\Omega)+\tau |h|(\Omega)\bigg)
\\ 
\hspace{1mm}+K| h|(\Omega)+(C(K)-1)|\Omega|+
\rho(\Omega) \log \bigg(\frac{\rho(\Omega)}{|\Omega|}\bigg)
-(1+||V||_{\infty})\rho(\Omega) .\\
\end{multline*}
Taking $K$ large enough, we obtain a uniform bound
on $\rho(\Omega)+\tau |h|(\Omega)$ and consequently on $|\gamma|,$ for any minimizing
sequence. (Recall we assume 
that the plans have no mass concentrated on
$\partial\Omega\times\partial\Omega$). This proves the claim.

We proceed to  the proof of $(ii)$. 
Let $\eta$ be a function with compact support in $\Omega.$
For each $\varepsilon>0,$ let 
$\rho_{\tau}^{\varepsilon}=\rho_{\tau}-\tau\varepsilon\eta.$ 
By Lemma \ref{uniform}, for sufficiently small $\varepsilon$ we can guarantee that $\rho_{\tau}^{\varepsilon}$
is non-negative. Since
$(\gamma, h+\varepsilon \eta)\in
ADM(\mu,\rho_{\tau}^{\varepsilon}),$
by minimality must have
\[
E(\rho_{\tau}^{\varepsilon})-E(\rho_{\tau})+
C_{\tau}(\gamma,h+\varepsilon \eta)-C_{\tau}(\gamma,h)\geq0.
\]
Dividing by $\varepsilon$ and letting $\varepsilon \downarrow0$, due
to \eqref{interior-mass}, Lemma \ref{uniform}, Lemma \ref{cre-mass}, the dominated convegence Theorem and
the fact that $e$ and $\mathcal{E}$ are locally Lipschitz in $D(e)$ and $D(\mathcal{E}),$
we get
\[
 \int_{\Omega} \big( e^\prime \circ h \big) \eta \hspace{1mm} \hspace{1mm}dy  
 - \int_{\Omega} \big( \log \rho_{\tau}+V \big) \eta \hspace{1mm} \hspace{1mm}dy
 \geq 0.  \]
Replacing $\eta$ by $-\eta$ gives the desired result.

Now, we show $(iii)$.  
Let $\lambda$ and $\Lambda$ be  positive numbers 
such that Proposition \ref{bar} holds. Then, for $\tau\in(0,1)$ we have that 
$\rho,\rho_{\tau}>\lambda\hspace{0.5mm}(\inf e^{-V}/\sup e^{-V})/(1+C_{0}).$ We let $\delta\in(0,1)$ have the property that
Corollary \ref{comparable} and Proposition \ref{trans-mass} hold for any $\tau\in(0,\delta).$  Now, observe
that Corollary $\ref{uniqueness}$ and the absolute continuity of
$\mu_{\tau}$ guarantee the existence of $T$. 
Then, $(iii)$ follows from  $(ii),$ Corollary \ref{uniqueness},
and Proposition \ref{phi} (Note that in Corollary \ref{uniqueness},
$T$ plays the role of $S$).

To show $(iv)$ we note that, by minimality of $\rho_{\tau},$
\[
Wb_{2}^{e,\Psi,\tau}(\mu,\rho_{\tau})\leq E(\mu)-E(\rho_{\tau}),
\]
and thus 
\begin{multline*}
\frac{1}{2\tau}\int_{\Omega}|\nabla\log\rho_{\tau}+\nabla 
V|^{2}(\rho_{\tau}+\tau h )\hspace{1mm}dy \leq\frac{1}{2\tau}
\int_{\overline{\Omega}\times\overline{\Omega}}| x-y|^{2}\hspace{1mm}d\gamma_{\tau}\\
\leq E(\mu)-E(\rho_{\tau})-\tau\int_{\Omega} e(h)\hspace{1mm}dy+\int_{\partial\Omega\times\Omega}\Psi(x)\hspace{1mm}d\gamma-\int_{\Omega\times\partial\Omega}\Psi(y)\hspace{1mm}d\gamma.
\end{multline*}
Consequently, after makign $\delta$ smaller if necessary, we get 
\[
\int_{\Omega}|\nabla\rho_{\tau}|^{2}\hspace{1mm}dy=C_{2}\int_{\Omega}|\nabla\log\rho_{\tau}|^{2}(\rho_{\tau}+\tau h )\hspace{1mm}dy <\infty.
\]
Here, $C_{2}:=C_{2}(\Psi,e,V,\rho_{0}).$
Also, we have used the fact that $\rho_{\tau}$ is bounded from below 
by $\lambda/(1 + C_{0}),$  $V$ belongs to $W^{1,2}(\Omega),$ and Corollary 
\ref{comparable} holds. Combining  \eqref{eq:-3}, Lemma \ref{bound},  and Lemma \ref{projection}, 
we can see that  
\begin{align*}
-\frac{| y-P(y)|^{2}}{2\tau}-
C_{1}| y-P(y)|-C\sqrt{\tau}
&\leq-\Psi(P(y))+\log\rho_{\tau}(y)+V(y)\\
&\leq C\sqrt{\tau}+C_{1}| y-P(y)|+\frac{| y-P(y)|^{2}}{2\tau},
\end{align*}
where $P(y)$ denotes any of the closest points in $\partial\Omega$
to $y$. Also, $C$ and $C_{1}$ depend only on $\partial\Omega$ and $\Psi$.
Finally $(iv)$ follows from the previous inequality. 
\end{proof}
\end{proposition}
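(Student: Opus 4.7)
The plan is to use the direct method for existence and then exploit first-order optimality together with the characterizations of optimal pairs already established in Section 3. I would begin by taking a minimizing sequence of measures $\{\rho^n\}_{n=1}^\infty$ with corresponding optimal pairs $\{(\gamma^n, h^n)\}_{n=1}^\infty \in ADM(\mu, \rho^n)$. Combining the lower bound on $C_\tau$ from (C3) (which gives a coercive term $K|h|(\Omega)$ after choosing $K$ large) with the entropy lower bound from Jensen's inequality applied to $\mathcal{E}$, I would get uniform bounds on $\rho^n(\Omega) + \tau|h^n|(\Omega)$, and hence on $|\gamma^n|$. Weak compactness in the sense used in Lemma~\ref{exists} then produces limits $\rho_\tau$, $h$, and $\gamma$, with absolute continuity of $\rho_\tau$ following from the superlinearity of $\mathcal{E}$. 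Lower semicontinuity of $E$ is standard from the convexity/superlinearity of $\mathcal{E}_x$, while lower semicontinuity of $C_\tau$ is exactly as in Lemma~\ref{exists}. This yields the minimizer and item~(i).

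For (ii) I would use an inner variation. Pick $\eta \in C_c^\infty(\Omega)$ and set $\rho_\tau^\varepsilon := \rho_\tau - \tau \varepsilon \eta$; for $\varepsilon$ small this stays positive by the uniform lower bound on $\rho_\tau$ (Proposition~\ref{bar} or an analogue, invoked in the proof as Lemma~\ref{uniform}). The pair $(\gamma, h + \varepsilon \eta)$ then lies in $ADM(\mu, \rho_\tau^\varepsilon)$ because the constraint \eqref{interior-mass} on $\gamma_{\overline{\Omega}}^{\Omega}$ is unchanged modulo an adjustment absorbed by $h$. By minimality,
$$ E(\rho_\tau^\varepsilon) - E(\rho_\tau) + C_\tau(\gamma, h + \varepsilon \eta) - C_\tau(\gamma, h) \geq 0. $$
Dividing by $\varepsilon$ and letting $\varepsilon \downarrow 0$, using dominated convergence together with the local Lipschitzness of $e$ on $D(e)$ and of $\mathcal{E}$ on $D(\mathcal{E})$, yields $\int_\Omega (e' \circ h - \log \rho_\tau - V)\eta \, dx \geq 0$; replacing $\eta$ by $-\eta$ gives equality.

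For (iii) I would invoke the structural results of Section~3: from (ii) we have $e' \circ h = \log \rho_\tau + V$, so the last clause of Proposition~\ref{phi} identifies $\varphi^*_{|\Omega} = -(\log \rho_\tau + V) + \kappa$ up to an additive constant. Corollary~\ref{uniqueness}(iii) then provides the unique transport map $T$ (playing the role of $S$ there) with $-\nabla \varphi^* = (T - \mathrm{Id})/\tau$, which gives exactly \eqref{eq:-9}. To apply these results I need to verify their hypotheses, which is why one must take $\tau < \delta$: this $\delta$ is chosen small enough that Proposition~\ref{bar} gives uniform positivity of both $\mu$ and $\rho_\tau + \tau h$ (using Corollary~\ref{comparable} to compare $h$ with $\rho_\tau$), so that Proposition~\ref{trans-mass} and Corollary~\ref{uniqueness} apply.

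For (iv) I would exploit the quadratic inequality coming from minimality: testing against $\rho = \mu$ yields $Wb_2^{e,\Psi,\tau}(\mu,\rho_\tau) \leq E(\mu) - E(\rho_\tau)$, and substituting the expression from (iii) for the kinetic part of $C_\tau$ gives
$$ \tfrac{1}{2\tau} \int_\Omega |\nabla \log \rho_\tau + \nabla V|^2 (\rho_\tau + \tau h) \, dy \leq E(\mu) - E(\rho_\tau) + O(1). $$
Together with the uniform positive lower bound on $\rho_\tau$ from Proposition~\ref{bar}, this produces the $W^{1,2}$ bound. The main obstacle is the boundary trace estimate: the identity $\varphi^* = -\Psi$ on $\partial\Omega$ (from Proposition~\ref{phi}(c)) must be reconciled with $\varphi^* = -\log \rho_\tau - V + \kappa$ in the interior, across a boundary layer of width $O(\sqrt{\tau})$ controlled by the quadratic transport term. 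I expect one to use Lemma~\ref{bound} and Lemma~\ref{projection} to obtain pointwise Lipschitz bounds on $-\Psi \circ P - (\log \rho_\tau + V)$ in terms of $|y - P(y)|$ plus an error of order $\sqrt{\tau}$, which, evaluated as $y$ approaches $\partial\Omega$, yields the $L^\infty$ bound on $\mathrm{Tr}[\rho_\tau] - e^{\Psi - V}$. Handling the constant $\kappa$ uniformly in $\tau$ and patching interior Lipschitz estimates to the boundary is the delicate part.
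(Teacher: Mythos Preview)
Your proposal is correct and follows essentially the same route as the paper: direct method plus coercivity from (C3) and Jensen for existence and (i); the same inner variation $(\gamma,h+\varepsilon\eta)\in ADM(\mu,\rho_\tau-\tau\varepsilon\eta)$ for (ii); and Proposition~\ref{phi} together with Corollary~\ref{uniqueness}, enabled by Proposition~\ref{bar}, Corollary~\ref{comparable}, and Proposition~\ref{trans-mass}, for (iii).

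The only point worth flagging is your treatment of the trace estimate in (iv). You phrase it as reconciling $\varphi^*_{|\partial\Omega}=-\Psi$ with $\varphi^*_{|\Omega}=-(\log\rho_\tau+V)+\kappa$ and then worry about controlling the constant $\kappa$ uniformly in $\tau$. The paper sidesteps this entirely: it does not go through $\varphi^*$ for the trace bound, but instead invokes the direct perturbation inequality \eqref{eq:-3} from Proposition~\ref{perturba2}, which already gives a two-sided pointwise bound on $-\Psi(P_{-\Psi,\tau}(y))+\log\rho_\tau(y)+V(y)$ with no additive constant. Combining \eqref{eq:-3} with Lemma~\ref{bound} (so that $|y-S(y)|\le C\sqrt{\tau}$) and Lemma~\ref{projection} (to pass from $P_{-\Psi,\tau}$ to the metric projection $P$) yields the stated inequality and hence the $L^\infty$ bound on $Tr[\rho_\tau]-e^{\Psi-V}$ directly. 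Your anticipated difficulty with $\kappa$ therefore does not arise.
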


\textit{Proof of Theorem \ref{equation}. }
Let $\rho_{0}$ be bounded and uniformly positive. Let $\delta\in(0,1)$ be such that Propositions \ref{step}
and \ref{inequality2} and Corollary \ref{comparable}  hold for any $\tau\in(0,\delta)$. For any $n$ in $\mathbb{N},$ let $(\gamma_{n}^{\tau},h_{n}^{\tau}$)
be the minimizing pair from $\rho_{n}^{\tau}$ to $\rho_{n+1}^{\tau}$.
Also, let $T_{n}^{\tau}$ be the map that induces $(\gamma_{n}^{\tau})_{\overline{\Omega}}^{\Omega}$
given by Proposition \ref{step} $(ii)$.

Let $t_{f}$ be a positive number larger than $\tau.$ 
Iterating Proposition $\ref{bar},$ we can see that
that there exist positive constants
$\lambda$ and $\Lambda$ such that 
\begin{equation}
\label{exp}
 \bigg((1+C_{0}\tau)^{\frac{1}{\tau}}\bigg)^{-n\tau} \frac{\lambda}{\sup e^{-V}}
 e^{-V}\leq\rho_{n}^{\tau}\leq \frac{\Lambda}{\inf e^{-V}}
 e^{-V}\hspace{1em}\forall n\in \mathbb{N}.
\end{equation}
Note
\[
 \lim_{\tau\rightarrow0}(1+C_{0}\tau)^{\frac{1}{\tau}}=e^{C_{0}}.
\]
Hence, for sufficiently small $\tau$ we obtain a uniform lower bound for $\rho_{n}^{\tau}$
whenever $n\tau \leq t_{f}+1.$ Then, Lemmas \ref{bound},
\ref{boundary}, \ref{interior}, \ref{cost bound},
Corollary \ref{comparable}, and 
Proposition \ref{inequality2}
can be iterated to hold, with uniform constants 
$C,$ $\kappa_{1},$ and $\kappa_{2},$ for all these measures. Henceforth, we assume the condition $n\tau \leq t_{f}+1.$ Fix $\zeta\in C_{c}^{\infty}(\Omega).$

Recall that given $\gamma,$ we denote by $\gamma_{A}^{B}$ its restriction
to $A\times B$. Note that since
\[
\gamma_{n}^{\tau}=\big(\gamma_{n}^{\tau}\big)_{\Omega}^{\Omega}+\big(\gamma_{n}^{\tau}\big)_{\Omega}^{\partial\Omega}+\big(\gamma_{n}^{\tau}\big)_{\partial\Omega}^{\Omega},
\]
by \eqref{interior-mass} we have
\[
\mu_{n}^{\tau}=\big(\pi_{1}\big)_{\#}\big(\gamma_{n}^{\tau}\big)_{\Omega}^{\Omega}+\big(\pi_{1}\big)_{\#}\big(\gamma_{n}^{\tau}\big)_{\Omega}^{\partial\Omega},
\]
and
\[
\mu_{n+1}^{\tau}=\big(\pi_{2}\big)_{\#}\big(\gamma_{n}^{\tau}\big)_{\Omega}^{\Omega}+\big(\pi_{2}\big)_{\#}\big(\gamma_{n}^{\tau}\big)_{\partial\Omega}^{\Omega}-\tau h^{\tau}_{n} \hspace{1mm}dy.
\]
Consequently, we obtain
\begin{multline}
\int_{\Omega}\zeta \hspace{1mm}d\mu_{n+1}^{\tau}-\int_{\Omega}\zeta \hspace{1mm}d\mu_{n}^{\tau}=\int_{\overline{\Omega}\times\overline{\Omega}}\zeta\circ\pi_{2}\hspace{1mm}d\big(\gamma_{n}^{\tau}\big)_{\Omega}^{\Omega}-\int_{\overline{\Omega}\times\overline{\Omega}}\zeta\circ\pi_{1}\hspace{1mm}d\big(\gamma_{n}^{\tau}\big)_{\Omega}^{\Omega}
\\
-\tau\int_{\Omega}\zeta \hspace{0.2mm} h^{\tau}_{n}\hspace{1mm}dy+\int_{\overline{\Omega}\times\overline{\Omega}}\zeta\circ\pi_{2}\hspace{1mm}d\big(\gamma_{n}^{\tau}\big)_{\partial\Omega}^{\Omega}-\int_{\overline{\Omega}\times\overline{\Omega}}\zeta\circ\pi_{1}\hspace{1mm}d\big(\gamma_{n}^{\tau}\big)_{\Omega}^{\partial\Omega}.\label{eq:pre weak}
\end{multline}
First, using Proposition \ref{step} and a Taylor expansion,
\begin{equation}
\begin{aligned}\int_{\overline{\Omega}\times\overline{\Omega}} & \zeta\circ\pi_{2}\hspace{1mm} \hspace{1mm}d\big(\gamma_{n}^{\tau}\big)_{\Omega}^{\Omega}-\int_{\overline{\Omega}\times\overline{\Omega}}\zeta\circ\pi_{1}\hspace{1mm}d\big(\gamma_{n}^{\tau}\big)_{\Omega}^{\Omega} \\
 &=\int_{\overline{\Omega}\times\overline{\Omega}}\big(\zeta(y)-\zeta(x)\big)\hspace{1mm}d\big(\gamma_{n}^{\tau}\big)_{\Omega}^{\Omega}\\
 & =\int_{\overline{\Omega}\times\overline{\Omega}}\big(\zeta(y)-\zeta(T_{n}^{\tau}(y))\big)1_{\{x=T^{\tau}_{n}(y)\}}\hspace{1mm}d\big(\gamma_{n}^{\tau}\big)_{\Omega}^{\Omega}\\
 & =\int_{\overline{\Omega}\times\overline{\Omega}}(\zeta-\zeta\circ T_{n}^{\tau})\circ\pi_{2}\hspace{1mm}d\big(\gamma_{n}^{\tau}\big)_{\Omega}^{\Omega}\\
 & =\int_{\overline{\Omega}\times\overline{\Omega}}(\zeta-\zeta\circ T_{n}^{\tau})1_{\{T^{\tau}_{n}\not\in\partial\Omega\}}\circ\pi_{2}\hspace{1mm}d\big(\gamma_{n}^{\tau}\big)_{\Omega}^{\Omega}+\int_{\overline{\Omega}\times\overline{\Omega}}(\zeta-\zeta\circ T_{n}^{\tau})1_{\{T^{\tau}_{n}\not\in\partial\Omega\}}\circ\pi_{2}\hspace{1mm}d\big(\gamma_{n}^{\tau}\big)_{\partial\Omega}^{\Omega}\\
 & =\int_{\Omega}(\zeta-\zeta\circ T_{n}^{\tau})1_{\{T^{\tau}_{n}\not\in\partial\Omega\}}\hspace{1mm}d\mu_{n+1}^{\tau}+R_{1}(\tau,n)\\
 & =-\int_{\Omega}\langle\nabla\zeta,T_{n}^{\tau}-Id\rangle\rho_{n+1}^{\tau}1_{\{T^{\tau}_{n}\not\in\partial\Omega\}}\hspace{1mm}dy+R_{2}(\tau,n)+R_{1}(\tau,n)\\
 & =-\tau\int_{\Omega}\langle\nabla\zeta,\nabla\rho_{n+1}^{\tau}+\rho_{n+1}^{\tau}\nabla V\rangle1_{\{T^{\tau}_{n}\not\in\partial\Omega\}}\hspace{1mm}dy+R_{2}(\tau,n)+R_{1}(\tau,n).
\end{aligned}
\end{equation}
Second, by item $(iii)$ of Proposition \ref{step}, we have
$h_{n}^{\tau}(y)=[e_{y}^{\prime}]^{-1}(\log\rho_{n+1}^{\tau}(y)+V(y))$
and consequently 
\[
-\tau\int_{\Omega}\zeta h_{n}^{\tau}\hspace{1mm}dy=-\tau\int_{\Omega}\zeta[e^{\prime}]^{-1}(\log\rho_{n+1}^{\tau}+V)\hspace{1mm}dy.
\]
Third, using Corollary \ref{uniqueness},
\begin{align*}\int_{\overline{\Omega}\times\overline{\Omega}} &\zeta\circ\pi_{2}\hspace{1mm}d\big(\gamma_{n}^{\tau}\big)_{\partial\Omega}^{\Omega} -\int_{\overline{\Omega}\times\overline{\Omega}}\zeta\circ\pi_{1}\hspace{1mm}d\big(\gamma_{n}^{\tau}\big)_{\Omega}^{\partial\Omega}\\
 & =\int_{\overline{\Omega}\times\overline{\Omega}}\zeta\circ\pi_{2}1_{\{x=T_{n}^{\tau}(y)\}}\hspace{1mm}d\big(\gamma_{n}^{\tau}\big)_{\partial\Omega}^{\Omega}-\int_{\overline{\Omega}\times\overline{\Omega}}\zeta\circ\pi_{1}1_{\{S_{n}^{\tau}(x)=y\}}\hspace{1mm}d\big(\gamma_{n}^{\tau}\big)_{\Omega}^{\partial\Omega}\\
 & =\int_{\overline{\Omega}\times\overline{\Omega}}\zeta(x)1_{\{x=T_{n}^{\tau}(y)\}}\hspace{1mm}d\big(\gamma_{n}^{\tau}\big)_{\partial\Omega}^{\Omega}-\int_{\overline{\Omega}\times\overline{\Omega}}\zeta_{n+1}(y)1_{\{S_{n}^{\tau}(x)=y\}}\hspace{1mm}d\big(\gamma_{n}^{\tau}\big)_{\Omega}^{\partial\Omega}+R_{3}(\tau,n)\\
 & =\int_{\overline{\Omega}\times\overline{\Omega}}\zeta(x)\hspace{1mm}d((\gamma_{n}^{\tau})_{\partial\Omega}^{\Omega}-\int_{\overline{\Omega}\times\overline{\Omega}}\zeta(y)\hspace{1mm}d(\gamma_{n}^{\tau})_{\Omega}^{\partial\Omega}+R_{3}(\tau,n).
\end{align*}
Here, $S^{\tau}_{n}$ is the map which induces 
$(\gamma_{n}^{\tau})_{\Omega}^{\overline{\Omega}},$ 
given by Corollary \ref{uniqueness}.
Putting the above together, we obtain 
\begin{equation}
\label{eq:stepweak}
\begin{aligned}
\int_{\Omega}\zeta \hspace{1mm}d\mu_{n+1}^{\tau}-\int_{\Omega}\zeta \hspace{1mm}d\mu_{n}^{\tau}=\tau\bigg(&-\int_{\Omega}
\langle \nabla\zeta_{n+1}, 
\nabla\rho_{n+1}^{\tau} +\rho_{n+1}^{\tau}\nabla V \rangle 1_{\{T_{n}^{\tau}\not\in\partial\Omega\}}\hspace{1mm}dx \\ 
 & -\int_{\Omega}\zeta_{n+1}[e^{\prime}]^{-1}(\log\rho_{n+1}^{\tau}+V)\hspace{1mm}dx\bigg)\\
 & +\int_{\overline{\Omega}\times\overline{\Omega}}\zeta(x)\hspace{1mm}d((\gamma_{n}^{\tau})_{\partial\Omega}^{\Omega}-\int_{\overline{\Omega}\times\overline{\Omega}}\zeta(y)\hspace{1mm}d(\gamma_{n}^{\tau})_{\Omega}^{\partial\Omega}+R(n,\tau).
\end{aligned}
\end{equation}
Here, $R(n,\tau)$ is given by
\begin{align*}R&(n,\tau) =R_{1}(n,\tau)+R_{2}(n,\tau)+R_{3}(n,\tau)\\
 & =\tau\int_{\Omega}(\zeta(y)-\zeta\circ T_{n}^{\tau}(y))h_{n}^{\tau}1_{\{T_{n}^{\tau}\not\in\partial\Omega\}}\hspace{1mm}dy\\
 & +\int_{\Omega}\bigg(\int_{0}^{1}\bigg(\langle\nabla\zeta\circ((1-s)T_{n}^{\tau}+sId),Id-T_{n}^{\tau}\rangle-\langle\nabla\zeta,Id-T_{n}^{\tau}\rangle\bigg)\rho_{n+1}^{\tau}1_{\{T_{n}^{\tau}\in\partial\Omega\}}\hspace{1mm}dy\\
 & +\int_{\overline{\Omega}\times\overline{\Omega}}\bigg(\zeta\circ\pi_{2}-\zeta\circ\pi_{1}\bigg)1_{\{x=T_{n}^{\tau}(y)}\hspace{1mm}d\big(\gamma_{n}^{\tau}\big)_{\partial\Omega}^{\Omega}-\int_{\overline{\Omega}\times\overline{\Omega}}\bigg(\zeta\circ\pi_{1}-\zeta\circ\pi_{2}\bigg)1_{\{S_{n}^{\tau}(x)=y\}}\hspace{1mm}d\big(\gamma_{n}^{\tau}\big)_{\Omega}^{\partial\Omega}.
\end{align*}
Recall $\zeta$ is compactly supported. Hence, iterating Lemma \ref{bound}, for sufficiently small $\tau$
 we have that the intersection between the sets $supp(\zeta\circ\pi^{1})$
, $supp(\zeta\circ\pi^{2}),$ and $supp\big(\big(\gamma_{n}^{\tau}\big)_{\Omega}^{\partial\Omega}+(\gamma_{n}^{\tau}\big)_{\partial\Omega}^{\Omega}\big)$
is empty. Consequently, iterating Lemmas \ref{bound} and  \ref{interior}, we deduce
\begin{equation*}
\begin{aligned}\bigg|R(n,\tau)\bigg| & \leq\tau\mbox{Lip}(\zeta)\int_{\Omega}| y-T_{n}^{\tau}(y)|| h_{n}^{\tau}| \hspace{1mm}dy\\
 & +\mbox{Lip}(\nabla\zeta)\int_{\Omega}| T_{n}^{\tau}-Id|^{2}\rho_{n+1}^{\tau}\hspace{1mm}dy\\
 & \leq C_{1}(\zeta,\Psi,e,V,\rho_{0},\Omega)\bigg[\tau^{\frac{3}{2}}+\int_{\Omega}| T_{n}^{\tau}-Id|^{2}
 (\rho_{n+1}^{\tau}+\tau h)\hspace{1mm}dy\bigg].
\end{aligned}
\end{equation*}
Here, we have used  Corollary \ref{comparable} and the fact that $\Omega$ is bounded. Now, by Proposition \ref{inequality2}
\begin{equation}
\int_{\overline{\Omega}\times\overline{\Omega}}\frac{| x-y|^{2}}{2\tau}\hspace{1mm}d\gamma_{n}^{\tau}\leq C_{2}(\Psi,e,V,\rho_{0})\bigg(E(\rho_{n}^{\tau})-\int_{\Omega}\Psi \hspace{1mm}d\mu_{n}^{\tau}-E(\rho_{n+1}^{\tau})+\int_{\Omega}\Psi \hspace{1mm}d\mu_{n+1}^{\tau}+\tau\bigg).
 \label{eq:-15}
\end{equation}
Thus, combining the above inequalities with \eqref{interior-mass}, Lemma \ref{interior},
and Corollary \ref{comparable},
we get
\[
\bigg|R(n,\tau)\bigg|\leq C_{3}(\zeta,\Psi,e,V,\rho_{0})\bigg(\tau^{3/2}+\tau\bigg[E(\rho_{n}^{\tau})-\int_{\Omega}\Psi \hspace{1mm}d\mu_{n}^{\tau}-E(\rho_{n+1})+\int_{\Omega}\Psi \hspace{1mm}d\mu_{n+1}^{\tau}\bigg]\bigg).
\]
This implies
\begin{multline*}
\bigg|\sum_{n=M}^{N-1}R(n,\tau)\bigg|\leq C_{3}(\zeta,\Psi,e,V,\rho_{0})
\bigg(\sqrt{\tau}(M-N)\tau\\
+\tau\bigg[E(\rho_{M}^{\tau})-\int_{\Omega}\Psi \hspace{1mm}d\mu_{M}^{\tau}-E(\rho^{\tau}_{N})+\int_{\Omega}\Psi \hspace{1mm}d\mu_{N}^{\tau}\bigg]\bigg),
\end{multline*}
for sufficiently small $\tau$  and all integers $N$ and $M$ such that $\tau M \leq \tau N \leq t_{f} + 1.$

Let $\tau=\tau_{k}.$ Also, define
\[
\rho^{\tau_{k}}(t)=\rho_{n+1}^{\tau_{k}}\hspace{1em}\mbox{for}\hspace{1em}t\in\big((n+1)\tau_{k},n\tau_{k}],
\]
and
\[
\theta_{h}\rho^{\tau_{k}}(t)=\rho^{\tau_{k}}(t+h),
\]
for any positive constant $h.$
Now, choose $0\leq r<s<t_{f}+1$ and add up \eqref{eq:stepweak}
from $M=[r\backslash\tau_{k}]$ to $N=[s\backslash\tau_{k}]-1$ to
get

\begin{equation}
\begin{aligned}\int_{\Omega}\zeta\rho^{\tau_{k}}(s)\hspace{1mm}dx &-\int_{\Omega}\zeta\rho^{\tau_{k}}(r)\hspace{1mm}dx\\
 &=\int_{\tau_{k}[t\backslash\tau_{k}]}^{\tau_{k}[s\backslash\tau_{k}]}\bigg(-\int_{\Omega}\langle \nabla\zeta, \nabla\rho^{\tau_{k}}(t)+\nabla V\rho^{\tau_{k}}\rangle 1_{\{T_{n}^{\tau_{k}}\not\in\partial\Omega;[t\backslash\tau_{k}]=n\}}\hspace{1mm}dx\\ 
 & -\int_{\Omega}\zeta[e^{\prime}]^{-1}(\log\rho^{\tau_{k}}(t))+V\bigg)dt+\bigg|\sum_{n=M}^{N}R(n,\tau_{k})\bigg|\\
 & =\int_{\tau_{k}[t\backslash\tau_{k}]}^{\tau_{k}[s\backslash\tau_{k}]}\bigg(\int_{\Omega}\big[\Delta\zeta-\langle\nabla\zeta,\nabla V \rangle\big] \rho^{\tau_{k}}(t)\hspace{1mm}dx-\int_{\Omega}\zeta[e^{\prime}]^{-1}(\log\rho^{\tau_{k}}(t)+V)\bigg)dt\\
 & \hspace{26em}+\sum_{n=M}^{N}R(n,\tau_{k}).
 \label{eq:discreteweak}
\end{aligned}
\end{equation}
Here, we have used the fact that by Lemma \ref{bound},
for sufficiently small $\tau$, $\{T_{n}^{\tau_{k}}\in\partial\Omega;[t\backslash\tau_{k}]=n\}$
and $supp(\zeta)$ are disjoint.\\
The strategy to pass to the limit is to use the Aubin-Lions Theorem
\cite[Theorem 5]{Aubin-Lions}.
Let $U$ be an open set with Lipschitz boundary 
whose closure is compactly contained in $\Omega$. Also, set $p>d+1.$ First, note  $L^{2}(U)$ embeds 
in the dual of $W^{2,p}(U).$ We will denote this space by $W^{-2,p}(U)$. Second, observe
$W^{1,2}(U)$ embeds compactly in $L^{2}(U)$ (recall $\Omega$ is bounded).
Thus, in order to use the Aubin-Lions Theorem, we will show $\rho^{\tau_{k}}$ is bounded
in $L^{2}(0,t_{f};L^{2}(U))\cap$
$L_{loc}^{1}(0,t_{f};W^{1,2}(U))$ and 
\[
||\theta_{h}\rho^{\tau_{k}}-\rho^{\tau_{k}}||_{L^{1}(t_{1},t_{2};
W^{-2,p}(U))}\rightarrow0 \hspace{1em}\forall 0\leq t_{1}<t_{2}<t_{f},
\]
as $h\rightarrow0$, uniformly.\\
 Given $t\in(t_{1},t_{2})$ set $N=\ceil[\bigg]{\frac{t+h}{\tau_{k}}}-1$ and
$M=\ceil[\bigg]{\frac{t}{\tau_{k}}}$. For each $\zeta\in W^{2,p}(U),$
we consider an extension to $\mathbb{R}^{n}$ (not relabeled) satisfying
$supp(\zeta)\subset \Omega$ and
\[
||\zeta||_{W^{2,p}(\mathbb{R}^{n})}\leq C_{4}||\zeta||_{W^{2,p}(U)}.
\]
Here, $C_{4}:=C_{4}(U,\Omega)$. Then,
\begin{align*}\int_{\Omega}\zeta(\theta_{h}\rho^{\tau_{k}}(t)&-\rho^{\tau_{k}}(t))
\hspace{1mm}dx\\
& =\sum_{n=M}^{N}\int_{\Omega}\zeta \hspace{1mm}d\mu_{n+1}^{\tau_{k}}-\int_{\Omega}\zeta \hspace{1mm}d\mu_{n}^{\tau_{k}}\\
&=\sum_{n=M}^{N}\int_{\Omega}\zeta(y)-\zeta(x)\hspace{1mm}d\gamma_{n}^{\tau_{k}}-\int_{\Omega}\zeta\tau_{\tau_{k}}h_{n}^{\tau_{k}}\hspace{1mm}dx\\
&=\sum_{n=M}^{N}\int_{\overline{\Omega}\times\overline{\Omega}}\int_{0}^{1}\langle\nabla\zeta(x+s(y-x)),y-x\rangle \hspace{1mm}ds \hspace{1mm}d\gamma_{n}^{\tau_{k}}-\int_{\Omega}\zeta\tau h_{n}^{\tau_{k}}\hspace{1mm}dx\\
&\leq\sum_{n=M}^{N}\int_{\overline{\Omega}\times\overline{\Omega}}\bigg(\int_{0}^{1}|\nabla\zeta(x+s(y-x))|^{2}ds\hspace{1mm}d\gamma_{n}^{\tau_{k}}\bigg)^{\frac{1}{2}}\bigg(\int_{\overline{\Omega}\times\overline{\Omega}}| y-x|^{2}\hspace{1mm}d\gamma_{n}^{\tau_{k}}\bigg)^{\frac{1}{2}}\\
& \hspace{23em}+C_{5}\tau_{k}||\zeta||_{W^{2,p}(U)}\\
&\hspace{10em}\leq C_{6} ||\zeta||_{W^{2,p}(U)}\sum_{n=M}^{N}\bigg[\bigg(\int_{\overline{\Omega}\times\overline{\Omega}}| y-x|^{2}\hspace{1mm}d\gamma_{n}^{\tau_{k}}\bigg)^{\frac{1}{2}}+\tau_{k}\bigg].
\end{align*}
Here, we used Lemma \ref{interior} and the
embedding of $W^{2,p}(U)$ into $C^{1}(U)$.
Also, $C_{5}:=C_{5}(t_{f},\Psi,e,V,\rho_{0})$ and  $C_{6}:=C_{6}(\Omega,U,t_{f},\Psi,e,V,\rho_{0}).$
Consequently, it follows:
\begin{equation}
\begin{aligned}||\theta_{h}\rho^{\tau_{k}}(t)&-\rho^{\tau_{k}}(t)||_{W^{-2,p}(U)}\\
 & =\sup_{||\zeta||_{W^{2,p}(U)}=1}\int_{\Omega}\zeta\big(\theta_{h}\rho^{\tau_{k}}(t)-\rho^{\tau_{k}}(t)\big)\hspace{1mm}dy\\
 & \leq C_{6}\sum_{n=M}^{N}\bigg[\bigg(\int_{\overline{\Omega}\times\overline{\Omega}}| y-x|^{2}\hspace{1mm}d\gamma_{n}^{\tau}\bigg)^{\frac{1}{2}}+\tau_{k}\bigg]\\
 & \leq C_{6}\bigg(\tau_{k}(N-M)+\big(\tau_{k}(N-M)\big)^{\frac{1}{2}}\bigg(\sum_{n=M}^{N}\bigg[\bigg(\int_{\overline{\Omega}\times\overline{\Omega}}\frac{| y-x|^{2}}{\tau}\hspace{1mm}d\gamma_{n}^{\tau_{k}}\bigg)\bigg)^{\frac{1}{2}}\\
 & \leq C_{7}\bigg(h+\sqrt{h}\bigg[\sum_{n=M}^{N}E(\rho_{n}^{\tau_{k}})-\int_{\Omega}\Psi\rho_{n}^{\tau_{k}}\hspace{1mm}dy-E(\rho_{n}^{\tau_{k}})+\int_{\Omega}\Psi\rho_{n}^{\tau_{k}}+\tau \hspace{1mm}dy\bigg]^{\frac{1}{2}}\bigg)\\
 & \leq C_{7}\bigg(h+\sqrt{h}\bigg[E(\rho_{M}^{\tau_{k}})-\int_{\Omega}\Psi\rho_{M}^{\tau_{k}}\hspace{1mm}dy-E(\rho_{N+1}^{\tau_{k}})+\int_{\Omega}\Psi\rho_{N+1}^{\tau_{k}}+h\hspace{1mm}dy)\bigg]^{1/2}\bigg).
\label{equicont}
 \end{aligned}
\end{equation}

Here, we used the Jensen inequality and Proposition \ref{inequality2}. Also, $C_{7}:=C_{7}(t_{f},\Omega,U,\Psi,e,V,\rho_{0}).$
This shows $||\theta_{h}\rho^{\tau_{k}}-\rho^{\tau_{k}}||_{L^{1}(t_{1},t_{2};W^{-2,p}(U))}\rightarrow0$
as $h\rightarrow0,$ uniformly in $k$.
In order to show that $\rho^{\tau_{k}}$ is bounded in $L^{1}(0,t_{f};W^{1,2}(U))$
we use \eqref{interior-mass}, Proposition \ref{step}, Lemma \ref{interior}, Corollary \ref{comparable} and Proposition
\ref{inequality2}
to obtain
\begin{equation}\label{H1}
\begin{aligned}\int_{\Omega}\bigg|\nabla\log\rho_{n+1}^{\tau_{k}}&+\nabla V\bigg|^{2}\big(\rho_{n+1}^{\tau_{k}}+\tau_{k}h_{n}^{\tau_{k}}\big)\hspace{1mm}dx\hspace{1mm}\tau_{k} \leq \int_{\overline{\Omega}\times\overline{\Omega}} \frac{| x-y|^{2}}{2\tau_{k}}\hspace{1mm}d\gamma_{n}^{\tau_{k}}\\
 &\hspace{8em} \leq C_{8}\bigg(E(\rho_{n}^{\tau_{k}})-\int_{\Omega}\Psi \hspace{1mm}d\mu_{n}^{\tau_{k}}-E(\rho_{n+1}^{\tau_{k}})+\int_{\Omega}\Psi \hspace{1mm}d\mu_{n+1}^{\tau_{k}}+\tau_{k}\bigg),
\end{aligned}
\end{equation}
for every $n$ in $[0,t_{f}/\tau].$ Here, $C_{8}:=C_{8}(\Psi,e,V,\rho_{0}).$ 
By Proposition \eqref{exp}, we have that 
$\tilde{\Lambda}\geq\rho_{n+1}^{\tau_{k}}
\geq
\tilde{\lambda},$ for some positive constants $\tilde{\lambda}
:=\tilde{\lambda}(t_{f})$ and $\tilde{\Lambda}$ and  every $n$ in $[0,t_{f}/\tau]$. 
Then, using \eqref{H1}, the Young inequality, Corollary \ref{comparable}, and the fact that $V$ is in
$W^{1,2}(\Omega),$ we get
\begin{equation}
\int_{0}^{t_{f}}\bigg(\int_{\Omega}|\nabla\rho^{\tau_{k}}(t)
|^{2}\hspace{1mm}dx\bigg)dt<C_{9}(t_{f},\Psi,e,V,\rho_{0})(1+t_{f}).\label{eq:fisher}
\end{equation}
Hence, we conclude that $\{\rho^{\tau_{k}}\}_{k=1}^{\infty}$ is equibounded in $L^{2}(0,t_{f};W^{1,2}(\overline{\Omega}))$.
Also, from Proposition \ref{bar}, we  have that $\{\rho^{\tau_{k}}\}_{k=1}^{\infty}$ is equibounded in $L^{2}(0,t_{f};L^{2}(\overline{\Omega}))$ as well.

This shows that the hypotheses of the Aubin-Lions Theorem are satisfied. Thus,
we obtain a map $\rho\in L^{2}(0,t_{f};L^{2}(U))$ and a subsequence (not
relabeled). Such a subsequence satisfies that  $\rho^{\tau_{k}}\rightarrow\rho$
in $L^{2}(0,t_{f};L^{2}(U))$ as $k\rightarrow\infty$.
By \eqref{equicont} and the Arzela Ascoli Theorem,
this subsequence converges to $\rho$ in
$C^{1/2}(0,t_{f};W^{-2,p}(U)).$

 The final step is to use a diagonal argument
along a sequence of sets $U$ increasing to $\Omega$. By doing this we obtain 
a further subsequence converging in $L^{2}(0,t_{f};L_{loc}^{2}(\Omega))$ 
and in 
$C^{1/2}(0,t_{f};W^{-2,p}_{loc}(\Omega))$
to a
map $\rho\in L^{2}(0,t_{f};L_{loc}^{2}(\Omega))$, which we have not relabeled.\\
Consequently, for any $\zeta\in C_{c}^{\infty}(\Omega),$
\[
\int_{\Omega}\zeta\rho^{\tau_{k}}(s)\hspace{1mm}dx-\int_{\Omega}\zeta\rho^{\tau_{k}}(r)
\hspace{1mm}dx\rightarrow\int_{\Omega}\zeta\rho(s)\hspace{1mm}dx-
\int_{\Omega}\zeta\rho(r)\hspace{1mm}dx.
\]
Let $U$ be an open set such that $supp(\zeta)\subset U$ and
$\overline{U}$ is compactly contained in $\Omega$. By Proposition \ref{bar}, there exists
$C_{10}:=C_{10}(\lambda,\Lambda,t_{f})$ such that
\[
 \int_{\Omega}|\zeta e^{\prime}(\log\rho^{\tau_{k}}(t)+V)|\hspace{1mm}dx
 \leq C_{10}\int_{\Omega}||\zeta||_{L^{\infty}(\Omega)}\hspace{1mm}dx<\infty, 
\]
and 
\[
 \int_{\Omega}|\big[\Delta\zeta-\langle\nabla\zeta,\nabla V\rangle\big]\rho^{\tau_{k}}(t)|\hspace{1mm}dx
 \leq C_{10}\int_{\Omega}\big[||\Delta\zeta||_{L^{\infty}(\Omega)}+
 |\nabla\zeta|^{2}+|\nabla V|^{2}\big]\hspace{1mm}dx<\infty.
\]
Recall that $V$ is in $W^{1,2}(\Omega).$
Using the fact that 
$\rho^{\tau_{k}}(t)\rightarrow \rho(t)$ 
in $L^{2}(U)$ for almost every $t$ and the 
dominated convergence Theorem, we get
\[\int_{\Omega}\zeta e^{\prime}(\log\rho^{\tau_{k}}(t)+V)\hspace{1mm}dx\rightarrow\int_{\Omega}\zeta e^{\prime}(\log\rho(t)+V)\hspace{1mm}dx,\]
and
\[
 \int_{\Omega}\big[\Delta\zeta-\langle\nabla\zeta,\nabla V\rangle\big]\rho^{\tau_{k}}(t)\hspace{1mm}dx
 \rightarrow\int_{\Omega}\big[\Delta\zeta
 -\langle\nabla\zeta,\nabla V\rangle\big]\rho(t)\hspace{1mm}dx.
\]
for almost every $t$ in $[0,t_{f}].$
Then, a second application of the dominated convergence Theorem gives us 
\begin{multline*}
\int_{\tau_{k}[r\backslash\tau_{k}]}^{\tau_{k}[s\backslash\tau_{k}]}\bigg(\int_{\Omega}\big[\Delta\zeta-\langle\nabla\zeta,\nabla V\rangle\big]\rho^{\tau_{k}}(t)\hspace{1mm}dx-\int_{\Omega}\zeta[e^{\prime}]^{-1}(\log\rho^{\tau_{k}}(t)+V)\bigg)dt\\
\rightarrow\int_{r}^{s}\bigg(\int_{\Omega}\big[\Delta\zeta-\langle\nabla\zeta,\nabla V\rangle\big]\rho(t)\hspace{1mm}dx-\int_{\Omega}\zeta[e^{\prime}]^{-1}(\log\rho(t)+V)\bigg)dt  
\end{multline*}
Moreover, \eqref{eq:fisher} and the Fatou Lemma yield
\[
\int_{0}^{t_{f}}\liminf_{k\rightarrow\infty}\bigg(\int_{\Omega}|\nabla\rho^{\tau_{k}}(t)|^{2}\hspace{1mm}dx\bigg)dt<\infty.
\]
This gives
\[
\liminf_{k\rightarrow\infty}\bigg(\int_{\Omega}|\nabla\rho^{\tau_{k}}(t)|^{2}\hspace{1mm}dx\bigg)<\infty\hspace{1em}\mbox{for a.e \ensuremath{t\geq0.}}
\]
Now, for any $t$ such that the above $\liminf$ is finite, consider
a subsequence $k_{n}$ (depending on $t$) such that
\[
\sup_{n\in\mathcal{\mathbb{N}}}\int_{\Omega}|\nabla\rho^{\tau_{k_{n}}}(t)|^{2}\hspace{1mm}dx<\infty.
\]
This implies that $\rho^{\tau_{k_{n}}}(t)$ is uniformly bounded in $W^{1,2}(\Omega)$.
Recall that $\rho^{\tau_{k}}(t)\rightarrow\rho(t)$ in $L^{2}(0,t_{f};L_{loc}^{2}(\Omega))$. Hence, $\rho^{\tau_{k_{n}}}(t)\rightharpoonup\rho(t)$
in $W^{1,2}(\Omega)$. Then, by Proposition \ref{step} we get that 
$\rho(t)-e^{\Psi-V}$ is in $W_{0}^{1,2}(\Omega)$. Hence, we  have shown that
the map $t\rightarrow \rho(t)$ is a weak solution of \eqref{T2}. Finally, to show \eqref{Z3},
we  use the fact that   $\rho^{\tau_{k}}(t)\rightarrow\rho(t)$ in 
$C^{1/2}(0,t_{f};W^{-2,p}_{loc}(\Omega))$
and  \eqref{exp}.\\
 $\square$

\section{Properties of Minimizers}
In this section, we let $\tau>0$ be a fixed time step. We set 
$\mu=\rho\hspace{1mm}\mathcal{L}^{d}_{|\Omega}$ and
denote by $\mu_{\tau}=\rho_{\tau}\hspace{1mm}\mathcal{L}_{|\Omega}^{d}$ a minimizer
of \eqref{minmov}. The density $\rho$ is assumed to be strictly positive. Additionally, we let
$(\gamma,h)$ be the associated optimal pair for $(\rho,\rho_{\tau})$. 
The objective of the section is to show some properties of $\mu_{\tau}$
that are necessary to prove the main result, Theorem \ref{equation}.

A priori, it is not immediate that one can obtain
a $\tau$ independent positive lower bound
for  $\mu_{\tau}$; this
is studied in Proposition \ref{bar}. Consequently, we cannot
use Proposition \ref{phi}.  However, since $\mu$ and $\mu_{\tau}$ are absolutely continuous, 
Lemma \ref{cre-mass} 
guarantees the existence of maps $T$ and $S$ with the property that
$(Id,T)_{\#}\mu=\gamma_{\Omega}^{\overline{\Omega}}$ and $(S,Id)_{\#}\mu_{\tau}=\gamma_{\overline{\Omega}}^{\Omega}.$
We will use this maps throughout this section.\\
(We remark that in the proof of Proposition 4.1, we get existence of 
$\rho_{\tau}$ without using Proposition \ref{phi} or any result 
from this section.)
\begin{lemma}\textbf{(Boundedness and Uniform positivity)}
\label{uniform} The minimizer
$\rho_{\tau},$ defined above,
is bounded and uniformly positive.
\begin{proof}
Let $r$ and $R$ be positive constants such that
\[
\log r+V<-\frac{\mbox{diam}(\Omega)^{2}}{2\tau}-||\Psi||_{\infty},
\] 
\[
\log R+V>\frac{\mbox{diam}(\Omega)^{2}}{\tau}+2||\Psi||_{\infty},
\]
and
\[
 R+1>\tau ||h||_{\infty}.
\]
(Recall that due to Lemma \ref{cre-mass}, $h$ is bounded).
Now, set $A_{R}^{\kappa}=\{\rho_{\tau}>R+1\}\cap\{\rho_{\tau}+\tau h<\kappa\}$
and $A_{r}=\{\rho_{\tau}+1<r\}.$ Define $\tilde{\gamma}$ by
\[
\tilde{\gamma}=\gamma+\varepsilon(P_{-\Psi,\tau,}Id)_{\#}1_{A_{r}}
-\varepsilon\gamma_{\overline{\Omega}}^{A_{R}^{\kappa}}+
\varepsilon(Id,P_{\Psi,\tau,})_{\#}\gamma_{\Omega}^{A_{R}^{\kappa}}
\]
If we set $\tilde{\rho}=\pi_{2}\tilde{\gamma}-\tau h,$ then
\[
\tilde{\rho}=\begin{cases}
\rho_{\tau} & \mbox{{in}}\hspace{1em}\Omega\backslash A_{R}^{\kappa}\cup A_{r},\\
\rho_{\tau}+\varepsilon & \mbox{{in}}\hspace{1em}A_{r},\\
\rho_{\tau}-\varepsilon(\rho_{\tau}+\tau h) & \mbox{{in}}\hspace{1em}A_{R}^{\kappa}.
\end{cases}
\]
Hence if $\frac{R+1}{\kappa}>\varepsilon,$ then $\tilde{\rho}\in\mathcal{M}(\Omega)$
and $(\tilde{\gamma},h)\in ADM(\mu,\tilde{\rho}).$ By optimality,
\begin{multline*}
0\leq E(\tilde{\rho})-E(\rho_{\tau})+C_{\tau}(\tilde{\gamma},h)-C_{\tau}(\gamma,h)
\\
\hspace{1em}\leq \int_{A_{r}}\big[\mathcal{E}(\rho_{\tau}+\varepsilon)-\mathcal{E}(\rho_{\tau})\big]\hspace{1mm}dy+\varepsilon\int_{A_{r}}\bigg(\frac{| P_{-\Psi,\tau,}(y)-y|^{2}}{2\tau}-\Psi(P_{-\Psi,\tau,}(y))\bigg)\hspace{1mm}dy
\\
\hspace{1.2em}+\int_{A_{R}^{\kappa}}\big[\mathcal{E}(\rho_{\tau}-\varepsilon(\rho_{\tau}+\tau h))-\mathcal{E}(\rho_{\tau})\big]\hspace{1mm}dy+\varepsilon\int_{A_{R}^{\kappa}}\bigg(\frac{\mbox{diam}^{2}(\Omega)}{2\tau}+||\Psi||_{\infty}
\\
\hspace{20em}-\frac{|S(y)-y|^{2}}{2\tau}+||\Psi||_{\infty} \bigg)(\rho+\tau h)\hspace{1mm}dy.
\end{multline*}
Then, by convexity of $\mathcal{E}$ with respect to its first variable
\begin{multline*}
0\leq\varepsilon\int_{A_{r}}\bigg[\mathcal{E}^{\prime}(\rho_{\tau}+\varepsilon)+\frac{| P_{-\Psi,\tau,}(y)-y|^{2}}{2\tau}-\Psi(P_{-\Psi,\tau,}(y))\bigg]\hspace{1mm}dy
\\
+\varepsilon\int_{A_{R}^{\kappa}}\bigg[-\mathcal{E}^{\prime}(\rho_{\tau}-\varepsilon(\rho_{\tau}+\tau h))+\frac{\mbox{diam}^{2}(\Omega)}{2\tau}+||\Psi||_{\infty}
\\
-\frac{|S(y)-y|^{2}}{2\tau}+||\Psi||_{\infty}\bigg](\rho+\tau h)\hspace{1mm}dy.
\end{multline*}
Now if we let $\varepsilon<\mbox{min}\big(1/\kappa,1\big)$,
then $\mathcal{E}^{\prime}(\rho_{\tau}-\varepsilon(\rho_{\tau}+\tau h)>\log R+V$
in $A_{R}^{\kappa}$ and $\mathcal{E}^{\prime}(\rho_{\tau}+\varepsilon)<\log r+V$ in $A_{r}.$ Hence,
by construction both integrands are strictly negative. Thus, we conclude
$|A_{r}|=|A_{R}^{\kappa}|=0.$ Since $\kappa$ was arbitrary and $\rho_{\tau}+\tau h\in L^{1}(\Omega)$,
we obtain the desired result.
\end{proof}
\end{lemma}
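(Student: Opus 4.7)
The plan is to establish both the upper and lower bound on $\rho_{\tau}$ by a single perturbation argument exploiting the optimality of $\rho_{\tau}$ in \eqref{minmov}, together with the strict convexity and superlinearity of $\mathcal{E}(\cdot,x) = z\log z - z + V(x)z + 1$. The intuitive point is that $\mathcal{E}^{\prime}(\rho) = \log \rho + V$ blows up to $-\infty$ as $\rho \downarrow 0$ and to $+\infty$ as $\rho \uparrow \infty$, while the transportation-plus-boundary cost of any single mass element is bounded by $\mathrm{diam}(\Omega)^2/(2\tau) + \|\Psi\|_{\infty}$; so for $\rho_{\tau}$ outside a certain $\tau$-dependent window, a small redistribution of mass to or from the boundary must strictly decrease the total functional, contradicting minimality.

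First I would pick thresholds $r$ small and $R$ large so that $\log r + V < -\mathrm{diam}(\Omega)^2/(2\tau) - \|\Psi\|_{\infty}$ and $\log R + V > \mathrm{diam}(\Omega)^2/\tau + 2\|\Psi\|_{\infty}$, with $R+1 > \tau \|h\|_{\infty}$ (using that $h$ is bounded by Lemma \ref{cre-mass}). Then, on the set $A_{r} := \{\rho_{\tau} + 1 < r\}$ I would add mass by pushing from the boundary via $(P_{-\Psi,\tau}, \mathrm{Id})_{\#}\mathbf{1}_{A_r}$, while on the truncated level set $A_{R}^{\kappa} := \{\rho_{\tau} > R+1\} \cap \{\rho_{\tau} + \tau h < \kappa\}$ I would remove mass by deleting the incoming $\gamma$-fibers and re-sending them to the boundary via $P_{\Psi,\tau}$. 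The truncation by $\kappa$ is what makes the resulting candidate $\tilde\rho$ non-negative for $\varepsilon < \min(1/\kappa, 1)$, and the pair $(\tilde\gamma, h)$ stays admissible for $(\mu, \tilde\rho)$ by construction of the two boundary corrections.

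Next I would write out $0 \le E(\tilde\rho) - E(\rho_{\tau}) + C_{\tau}(\tilde\gamma,h) - C_{\tau}(\gamma,h)$, bound $|P_{\pm\Psi,\tau}(y) - y|^2/(2\tau) \pm \Psi(P_{\pm\Psi,\tau}(y))$ by $\mathrm{diam}(\Omega)^2/(2\tau) + \|\Psi\|_{\infty}$, and linearize the entropy differences via the convexity inequalities $\mathcal{E}(\rho + \varepsilon) - \mathcal{E}(\rho) \le \varepsilon \mathcal{E}^{\prime}(\rho + \varepsilon)$ and $\mathcal{E}(\rho - \varepsilon(\rho + \tau h)) - \mathcal{E}(\rho) \le -\varepsilon(\rho + \tau h)\mathcal{E}^{\prime}(\rho - \varepsilon(\rho + \tau h))$. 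By the choice of $r$ and $R$, both resulting integrands are pointwise strictly negative on $A_r$ and $A_{R}^{\kappa}$ respectively, forcing $|A_r| = |A_{R}^{\kappa}| = 0$. Letting $\kappa \uparrow \infty$ and using $\rho_{\tau} + \tau h \in L^1(\Omega)$ gives $|\{\rho_{\tau} > R+1\}| = 0$, and together $r \le \rho_{\tau} \le R+1$ a.e.

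The main subtle point I expect is to ensure that the candidate $\tilde\gamma$ is an admissible non-negative measure under both perturbations simultaneously, which is why I must restrict to the truncation $A_R^\kappa$ and take $\varepsilon < 1/\kappa$; handling the lower bound is comparatively easier since adding mass never violates positivity. The two key ingredients that make this work at all are the a priori boundedness of $h$ supplied by Lemma \ref{cre-mass} (so that the level sets are nontrivial and the perturbation $\rho_{\tau} \pm \varepsilon(\rho_{\tau} + \tau h)$ does not change sign when $\varepsilon$ is small), and the uniform boundedness of the boundary-transport cost via $\mathrm{diam}(\Omega)$, which lets a single choice of $r, R$ work for all $x \in \Omega$.
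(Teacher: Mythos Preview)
Your proposal is correct and follows essentially the same approach as the paper: the same choice of thresholds $r,R$, the same truncated level sets $A_r$ and $A_R^\kappa$, the same two-sided boundary perturbation of $\gamma$, the same convexity linearization of $\mathcal{E}$, and the same $\kappa\uparrow\infty$ step. The only cosmetic difference is that the paper keeps the exact expression $\frac{|P_{-\Psi,\tau}(y)-y|^2}{2\tau}-\Psi(P_{-\Psi,\tau}(y))$ on $A_r$ rather than bounding it by $\mathrm{diam}(\Omega)^2/(2\tau)+\|\Psi\|_\infty$, but this is immaterial since the choice of $r$ absorbs that bound anyway; also note that $|A_r|=0$ with $A_r=\{\rho_\tau+1<r\}$ gives $\rho_\tau\ge r-1$ rather than $\rho_\tau\ge r$.
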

In the next Proposition, we will say that a point in $\Omega$ is a density point for 
$\rho_{\tau}+\tau h$ if it is a point of with density $1$ for the set 
$\{\rho_{\tau}+\tau h>0 \}$ and it is a Lebesgue point for $\rho_{\tau}$ and $h.$
As before, the interior of the set of points where $\mathcal{E}$ is finite will
be denoted by $D(\mathcal{E}).$ 
The Proposition is 
the analogue in our context of
\cite[Proposition 3.7]{Figalli-Gigli}.
\begin{proposition}\label{perturba2}
With the notation introduced at the beginning of this section, the following inequalities hold:  
\begin{itemize}
\item Let $y_{1}$ and $y_{2}$ be points in $\Omega.$
Assume that $y_{1}$ is a  density point for $\rho_{\tau}+\tau h$ 
and Lebesgue point for $S$ and that $y_{2}$ is a Lebesgue point for $\rho_{\tau}$
. Then 
\end{itemize}
\begin{equation}
\log(\rho_{\tau}(y_{1}))+V(y_{1})+\frac{| y_{1}-S(y_{1})|^{2}}{2\tau}
\leq\log(\rho_{\tau}(y_{2}))+V(y_{2})+\frac{| y_{2}-S(y_{1})|^{2}}
{2\tau}.\label{eq:-4}
\end{equation}
\begin{itemize}
\item Let $x\in\Omega$ be a Lebesgue point for $\rho,$ and $T$ and
assume that $T(x)\in\partial\Omega.$ Assume further that $y\in\Omega$
is a Lebesgue point for $\rho_{\tau}$ and $h$. Then 
\end{itemize}
\begin{equation}
\frac{| x-T(x)|^{2}}{2\tau}+\Psi(T(x))\leq\log(\rho_{\tau}(y)
)+V(y)+\frac{| x-y|^{2}}{2\tau}.\label{eq:-5}
\end{equation}
\begin{itemize}
\item Let $y_{1}$ and $S(y_{1})$ be points in $\Omega.$
Assume that $y_{1}$ is a density point for $\rho+\tau h$
and a Lebesgue point for $S$.
Then for any $y_{2}\in\partial\Omega,$ we have
\begin{equation}
\log(\rho_{\tau}(y_{1}))+V(y_{1})+\frac{| y_{1}-S(y_{1})|^{2}}{2\tau}
\leq\frac{| y_{2}-S(y_{1})|^{2}}{2\tau}+\Psi(y_{2}).\label{eq:-6}
\end{equation}
\item Let $y\in\Omega$ be a density point for $\rho_{\tau}+\tau h$ and 
a Lebesgue point for $P_{-\Psi,\tau}$. Then 
\end{itemize}
\begin{multline}
-\frac{| y-P_{-\Psi,\tau}(y)|^{2}}{2\tau}\leq-\Psi(P_{-\Psi,\tau}(y))
+\log\rho_{\tau}(y)+V(y)
\\ \leq\frac{| y-P_{-\Psi,\tau}(y)|| y-S(y)
|}{\tau}+\frac{| y-P_{-\Psi,\tau}(y)|^{2}}{2\tau}.\label{eq:-3}
\end{multline}
\begin{itemize}
\item Let $y\in\Omega$ be a density point for $\rho_{\tau}+\tau h$
and a Lebesgue point for  
$S$ and $P_{-\Psi,\tau}$  and assume that $S(y)=P_{-\Psi,\tau}(y)\in\partial\Omega$.
Then 
\end{itemize}
\begin{equation}
\log\big(\rho_{\tau}(y)\big)+V(y)+\frac{| y-P_{-\Psi,\tau}(y)|^{2}}{2\tau}-\Psi\big(P_{-\Psi,\tau}(y)\big)=0.\label{eq:-7}
\end{equation}
\begin{proof}
\textbf{-Heuristic argument. }
First we start with \eqref{eq:-5}. Consider a point $y\in\Omega$ and suppose
that we take some mass from $x\in\Omega$ and instead of sending it
to $T(x)\in\partial\Omega,$ we send it to $y$. Then, we are paying
$\log\rho_{\tau}(y)+V(y)$ in terms of the entropy and $\frac{| x-y|^{2}}{2\tau}$
in terms of the cost. We are also saving $\frac{| x-T(x)|^{2}}{2\tau}+\Psi(T(x))$
in terms of the cost. Hence, by minimality, we must have
\[
\frac{| x-T(x)|^{2}}{2\tau}+\Psi(T(x))\leq\log(\rho_{\tau}(y))+V(y)+\frac{| x-y|^{2}}{2\tau}.
\]
Now we proceed with \eqref{eq:-6}. We take some mass from $S(y_{1})$
and instead of sending it to $y_{1}$, we send it to $y_{2}.$ Then,
we pay $\frac{| y_{2}-S(y_{1})|^{2}}{2\tau}+\Psi(y_{2})$ in
terms of the cost. We also save $\frac{| y_{1}-S(y_{1})|^{2}}{2\tau}$
in terms of the cost and $\log(\rho_{\tau}(y_{1}))+V(y_{1})$ in terms
of the entropy. Thus, the desired result follows by minimality; (\ref{eq:-4}) is analogous.

To show (\ref{eq:-3}), we argue as follows. Pick a point $y\in\Omega$
and perturb $\rho_{\tau}$ by taking some small mass from a point
in $P_{-\Psi,\tau}(y)\in\mathcal{P}_{-\Psi,\tau}(y)$ and putting it onto $y$. 
(In the case
that $S(y)\in\partial\Omega$ we choose the point to be $S(y)=P_{-\Psi,\tau}(y)\in\mathcal{P}_{-\Psi,\tau}(y)$. It is
easy to verify that the minimality of the pair allows us to do this almost everywhere in $\Omega$.)
In this way, we pay $\log(\rho_{\tau}(y))+V(y)$
in terms of the entropy and $\frac{| y-P_{-\Psi,\tau}(y)|^{2}}{2\tau}-\Psi(P_{-\Psi,\tau}(y))$
in term of the cost. Consequently, by minimality we must have

\begin{equation}
\log(\rho_{\tau}(y))+V(y)-\Psi(P_{-\Psi,\tau}(y))\geq-\frac{| y-P_{-\Psi,\tau}(y)|^{2}}{2\tau}.\label{eq:-8}
\end{equation}

Now consider two cases. First, if $S(y)\in\Omega$, we stop sending
some mass from $S(y)$ to $y$. Instead, we send it to $P_{-\Psi,\tau}(y)\in\partial\Omega$.
By doing this, we earn $\log(\rho_{\tau}(y))+V(y)$ in terms of
the entropy and $\frac{| S(y)-y|^{2}}{2\tau}$in terms of the cost.
On the other hand, we pay $\frac{| S(y)-P_{-\Psi,\tau}(y)|^{2}}{2\tau}+\Psi(P_{-\Psi,\tau}(y))$
in terms of the cost. Thus, 

\begin{align*}
-\Psi(P_{-\Psi\tau}(y))+\log(\rho_{\tau}(y))+V(y)+
\frac{| S(y)-(y)|^{2}}{2\tau}&\leq\frac{| S(y)
-P_{-\Psi,\tau}(y)|^{2}}{2\tau}\\
&\leq\frac{\big(| y-S(y)|+| y-P_{-\Psi,\tau}(y)|\big)^{2}}{2\tau}.
\end{align*}
Consequently, when we combine this with \eqref{eq:-8}, we obtain 
\begin{multline}
-\frac{| y-P_{-\Psi,\tau}(y)|^{2}}{2\tau}\leq-\Psi(P_{-\Psi,\tau}(y)
)+\log\rho_{\tau}(y)+V(y)
\\ \leq\frac{| y-P_{-\Psi,\tau}(y)|| y-S(y)
|}{\tau}+\frac{| y-P_{-\Psi,\tau}(y)|^{2}}{2\tau}.\label{pre-trace}
\end{multline}
Second, if $S(y)\in\partial\Omega$ the above inequality is obtained
as a consequence of \eqref{eq:-7} and \eqref{eq:-8}.\\
The proof of \eqref{eq:-7} is a sort of converse of \eqref{eq:-8}. Indeed,
as $S(y)=P_{-\Psi,\tau}(y)\in\partial\Omega,$ we know that the mass
at $y$  comes from the boundary. Hence, we can perturb $\rho_{\tau}$
by taking a bit less mass from the boundary, so that there is  less mass in $y$. In this way, we save $\log(\rho_{\tau}(y))+V(y)$
in terms of the entropy and $\frac{| y-P_{-\Psi,\tau}(y)|^{2}}
{2\tau}-\Psi(P_{-\Psi,\tau}(y))$
in terms of the cost. Hence,
\[
-\bigg(\log(\rho_{\tau}(y))+V(y)+\frac{| y-P_{-\Psi,\tau}(y)|^{2}}{2\tau}-\Psi(P_{-\Psi,\tau}(y))\bigg)\geq0.
\]
From \eqref{eq:-8}, we get the opposite inequality and thus we conclude
the argument.\\
\textbf{-Rigorous proof}.  We only
prove \eqref{eq:-6}; the proofs of the other inequalities are analogous.

Let $\mathcal{T}_{y_{1}}^{y_{2}} 
\hspace{1mm}:\hspace{1mm}B_{r}(y_{1})\rightarrow \partial\Omega$
be identically  equal to $y_{2}$ in $B_{r}(y_{1}) $ and 
let $r$ be positive constant such that $B_{r}(y_{1})$
is contained in $\Omega$.
Define the plan $\gamma^{r,\varepsilon}$ by
\[
\gamma_{r}^{\varepsilon}=\gamma_{\overline{\Omega}}^{B_{r}(y_{1})^{c}}
+(1-\varepsilon)\gamma_{\overline{\Omega}}^{B_{r}(y_{1})}+\varepsilon
\big(\pi^{1},\mathcal{T}_{y_{1}}^{y_{2}})_{\#}\gamma_{\overline{\Omega}}
^{B_{r}(y_{1})}\big),
\]
and set
\[
\mu_{\tau}^{r,\varepsilon}:=\pi_{\#}^{2}\gamma^{r,\varepsilon}
-\tau h \hspace{1mm}dy.
\]
Observe that $\pi_{\#}^{1}\gamma^{r,\varepsilon}=\pi_{\#}^{1}\gamma,$
$(\gamma^{r,\varepsilon},h)\in ADM(\rho,\mu_{r}^{r,\varepsilon}),$
$(\gamma^{r,\varepsilon})_{\partial\Omega}^{\Omega}=\gamma_{\partial\Omega}^{\Omega}-\varepsilon\gamma_{\partial\Omega}^{B_{r}(y_{1})}$,
$(\gamma^{r,\varepsilon})_{\Omega}^{\partial\Omega}=\gamma_{\Omega}^{\partial\Omega}+\varepsilon\big(\pi^{1},\mathcal{T}_{y_{1}}^{y_{2}})_{\#}\gamma_{\Omega}^{B_{r}(y_{1})},$
and $\mu_{\tau}^{r,\varepsilon}=\rho_{\tau}^{r,\varepsilon}\mathcal{L}^{d}.$
Here, $\rho_{\tau}^{r,\varepsilon}$ is given by
\[\rho_{\tau}^{r,\varepsilon}=
\begin{cases}
\rho_{\tau}(y) & \mbox{if }y\in B_{r}(y_{1})^{c},\\
(1-\varepsilon)(\rho_{\tau}(y)+\tau h)-\tau h & \mbox{if }y\in B_{r}(y_{1}).
\end{cases}
\]
(We remark that by Lemma \ref{cre-mass} $h$ is in $L^{\infty}(\Omega)$ and by Lemma \ref{uniform} 
we that $\rho_{\tau}$ is bounded and uniformly positive. Hence, we
can guarantee that for sufficiently small $\varepsilon,$ $\rho^{r,\varepsilon}_{\tau}$ is strictly positive.)

From the minimality of $\rho_{\tau}$ and the relationship between $\gamma,$ $S,$ and $T,$ we get
\begin{align*}
0 & \leq\int_{\Omega}\mathcal{E}(\rho_{\tau}^{r,\varepsilon})\hspace{1mm}dx+C_{\tau}(\gamma^{r,\varepsilon},h)-\int_{\Omega}\mathcal{E}(\rho_{\tau})\hspace{1mm}dx-C_{\tau}(\gamma,h)\\
 & =\int_{B_{r}(y_{1})}
 \mathcal{E}((1-\varepsilon)(\rho_{\tau}(y)+\tau h)-
 \tau h)
 -\mathcal{E}(\rho_{\tau})
 \hspace{1mm}dy\\
 & +\varepsilon\int_{B_{r}(y_{1})}
 \bigg (\frac{| \mathcal{T}_{y_{1}}^{y_{2}}(y)
 -S(y)|^{2}}{2\tau}1_{\{S(y)\in \Omega\}} 
 -\frac{| y-S(y)|^{2}}{2\tau}\bigg)(\rho_{\tau}(y)+\tau h)\hspace{1mm}dy\\
 & +\varepsilon\int_{B_{r}(y_{1})}\big[
 \Psi(\mathcal{T}_{y_{1}}^{y_{2}}(y))1_{\{S(y)\in \Omega\}}
 +\Psi(S(y))1_{\{S(y)\in \partial\Omega\}} \big](\rho_{\tau}(y)+\tau h)\hspace{1mm}dy.\end{align*}
Dividing by $\varepsilon$ and letting $\varepsilon\downarrow0,$ using 
Lemma \ref{uniform}, the dominated convergence Theorem, and
the fact that $\mathcal{E}$ in Lipschitz any compact subset of
$D(\mathcal{E}),$
we
obtain
\begin{align*}
\int_{B_{r}(y_{1})}\mathcal{E^{\prime}}(\rho_{\tau}(y))(\rho_{\tau}(y)& +\tau h)\hspace{1mm}dy\\
\leq & \int_{B_{r}(y_{1})}\bigg(\frac{| \mathcal{T}_{y_{1}}^{y_{2}}(y)-S(y)|^{2}}{2\tau}1_{\{S(y)\in \Omega\}}-\frac{| y-S(y)|^{2}}{2\tau}\bigg)(\rho_{\tau}(y)+\tau h)\hspace{1mm}dy\\
 & +\int_{B_{r}(y_{1})}\big[\Psi(\mathcal{T}_{y_{1}}^{y_{2}}(y))1_{\{S(y)\in \Omega\}}
 +\Psi(S(y)) 1_{\{S(y)\in \partial\Omega\}} \big](\rho_{\tau}(y)+\tau h)\hspace{1mm}dy.
\end{align*}
Recall that by assumption $S(y_{1})\not\in\partial\Omega.$ Now, since $y_{1}$ is a 
density point for $\rho_{\tau}+\tau h,$ and a Lebesgue point for $S$ when we
divide both sides by $\mathcal{L}^{d}(B_{r}(0))$ and we let $r\downarrow0,$
we obtain \eqref{eq:-6}.
\end{proof} \end{proposition}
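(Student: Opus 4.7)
The plan is to prove each of the five statements by the same perturbation machinery already used by the author for \eqref{eq:-6}: build an explicit competitor $(\tilde\gamma,\tilde h)\in ADM(\mu,\tilde\mu_\tau)$ that modifies $\gamma$ on a small ball $B_r(\cdot)$, invoke minimality
\[
\int_\Omega\mathcal{E}(\rho_\tau)\,dy+C_\tau(\gamma,h)\le \int_\Omega\mathcal{E}(\tilde\rho_\tau)\,dy+C_\tau(\tilde\gamma,\tilde h),
\]
divide by the perturbation parameter $\varepsilon$ and let $\varepsilon\downarrow 0$ using dominated convergence together with the local Lipschitz character of $\mathcal{E}$ on $D(\mathcal{E})$, and finally divide by $\mathcal{L}^d(B_r)$ and let $r\downarrow 0$ to localize via the Lebesgue/density point hypothesis. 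Admissibility of the competitors for $\varepsilon$ small holds because $\rho_\tau$ is uniformly positive and bounded (Lemma \ref{uniform}) and $h\in L^\infty(\Omega)$ (Lemma \ref{cre-mass}).

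For \eqref{eq:-4} I would reroute a fraction $\varepsilon$ of $\gamma^{\Omega}_{B_r(y_1)}$ so that its second marginal is shifted from $B_r(y_1)$ to $y_2$; that is, replace $\gamma^{\Omega}_{B_r(y_1)}$ by $(1-\varepsilon)\gamma^{\Omega}_{B_r(y_1)}+\varepsilon(\pi^1,\mathcal{T}^{y_2})_\#\gamma^{\Omega}_{B_r(y_1)}$ with $\mathcal{T}^{y_2}\equiv y_2$. Dividing by $\varepsilon$ and by $|B_r(y_1)|$, and using the Lebesgue point assumptions on $S$ at $y_1$ and on $\rho_\tau$ at $y_2$, one reads off \eqref{eq:-4}. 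For \eqref{eq:-5} the analogous construction instead reroutes a portion of $\gamma_{B_r(x)}^{\partial\Omega}$, which was originally sending mass from $x$ to $T(x)\in\partial\Omega$, to land at the Lebesgue point $y\in\Omega$; the first-order cost change is $|x-y|^2/(2\tau)-|x-T(x)|^2/(2\tau)-\Psi(T(x))$ per unit mass, while the entropy pays $\mathcal{E}'(\rho_\tau(y))$. The case \eqref{eq:-6} is the one the author writes out in full, and it serves as the template.

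For \eqref{eq:-3} two perturbations are needed. The lower bound uses the ``import'' perturbation $\tilde\gamma=\gamma+\varepsilon(P_{-\Psi,\tau},\mathrm{Id})_\#\mathcal{L}^d_{B_r(y)}$, exactly analogous to the one employed in the proof of Lemma \ref{uniform}, and yields $-\Psi(P_{-\Psi,\tau}(y))+\log\rho_\tau(y)+V(y)\ge -|y-P_{-\Psi,\tau}(y)|^2/(2\tau)$. The upper bound requires a compound perturbation: simultaneously delete a fraction $\varepsilon$ of $\gamma_{\overline\Omega}^{B_r(y)}$ and re-emit the same mass from $S(y)$ to $P_{-\Psi,\tau}(y)$ (when $S(y)\in\Omega$), so that $\tilde\rho_\tau$ decreases on $B_r(y)$ while the net cost change per unit of mass is $|S(y)-P_{-\Psi,\tau}(y)|^2/(2\tau)+\Psi(P_{-\Psi,\tau}(y))-|S(y)-y|^2/(2\tau)$; combining with the triangle inequality $|S(y)-P_{-\Psi,\tau}(y)|\le|y-S(y)|+|y-P_{-\Psi,\tau}(y)|$ gives the stated upper bound. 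The case $S(y)\in\partial\Omega$ is not covered by this construction and is instead reduced to \eqref{eq:-7}. Finally, \eqref{eq:-7} is the dual of the lower-bound ``import'' perturbation: when $S(y)=P_{-\Psi,\tau}(y)\in\partial\Omega$ one may also \emph{decrease} the boundary-to-$y$ mass by $\varepsilon$, which reverses the sign in the Euler--Lagrange inequality and, combined with the previously obtained lower bound \eqref{eq:-8}, forces equality.

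The main obstacle is the compound perturbation used for the upper bound in \eqref{eq:-3}: it is the only place where one does \emph{two} simultaneous modifications of $\gamma$ and must check admissibility and the correct first-order expansion in $\varepsilon$ at once, and it is also where the case dichotomy $S(y)\in\Omega$ versus $S(y)\in\partial\Omega$ has to be handled by deferring the latter case to \eqref{eq:-7}. The remaining steps are routine adaptations of the construction explicitly written out for \eqref{eq:-6}: the passage to the limit in $\varepsilon$ is controlled by Lemma \ref{uniform} (which keeps $\rho_\tau$ in a compact subset of $D(\mathcal{E})$ where $\mathcal{E}'$ is continuous) and the passage to the limit in $r$ is justified by the Lebesgue/density point hypotheses.
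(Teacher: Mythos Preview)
Your plan is correct and matches the paper's approach exactly: heuristic perturbation arguments for each inequality, made rigorous by the same $\varepsilon\downarrow0$, $r\downarrow0$ machinery written out for \eqref{eq:-6}, with Lemmas \ref{uniform} and \ref{cre-mass} guaranteeing admissibility of the competitors. One small remark: what you call the ``compound perturbation'' for the upper bound in \eqref{eq:-3} when $S(y)\in\Omega$ is not really a new construction---it is precisely the perturbation of \eqref{eq:-6} specialized to $y_2=P_{-\Psi,\tau}(y)$, followed by the triangle inequality $|S(y)-P_{-\Psi,\tau}(y)|\le |y-S(y)|+|y-P_{-\Psi,\tau}(y)|$; so the paper treats it as a direct consequence rather than a separate obstacle.
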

Henceforth, we will omit the proof of these kinds of perturbation
arguments. They can be made rigorous using the ideas contained in the previous
Proposition. In the next Proposition,
the constants $C_{0}$ and $s$ are the ones described in the introduction.
\begin{proposition} \textbf{ ($L^{\infty}$Barriers) } 
With the notation introduced at the beginning of this section, the following holds:
There exists $\varepsilon\in(0,1),$ such that if  
$\lambda$ and $\Lambda$ satisfy 
$0<\lambda<\varepsilon<\frac{1}{\varepsilon}<\Lambda$ and
\[
\frac{\lambda}{\sup\{e^{-V}\}}e^{-V}\leq
\rho\leq\frac{\Lambda}{\inf\{e^{-V}\}}e^{-V},
\]
then
\[
\bigg(\frac{1}{1+C_{0}\tau}\bigg)\frac{\lambda}{\sup\{e^{-V}\}}e^{-V}\leq \rho_{\tau}\leq\frac{\Lambda}{\inf\{e^{-V}\}}e^{-V}.
\]
Here, $\varepsilon$ depends only on $ e,$
$||\Psi||_{\infty},$ and $||V||_{\infty}.$  
\begin{proof}
We first prove the lower bound.\\
Assumption (C8) allows us to choose 
$\varepsilon\in(0,s)$ 
so that 
\begin{equation}
[e_{x}^{\prime}]^{-1}(\log r+V(x))\leq C_{0}\hspace{1mm}r
\hspace{1em} {\rm{in}} \hspace{1em} \Omega,
\label{universal}
\end{equation}
and
\[\Psi>\log r+V
\hspace{1em} {\rm{on}} \hspace{1em} \partial\Omega
,\] 
for all $r\in(0,\varepsilon).$\\ 
Let $\lambda\in(0,\varepsilon)$ such that 
$\lambda \hspace{1mm} e^{-V}/ \sup\{e^{-V}\}\leq \rho$ and set 
\[A_{\lambda}=\bigg\{\rho_{\tau}<\bigg(\frac{1}{1+C_{0}\tau}\bigg)
\frac{\lambda}{\sup\{e^{-V}\}}e^{-V}\bigg\}.\]
For a contradiction suppose $\rho_{\tau}(A_{\lambda})>0$ (Note that by Lemma \ref{uniform}, $\rho_{\tau}$
is uniformly positive).

For each $x\in A_{\lambda},$ we perturb $(\gamma,h)$ by decreasing $h(x)$
and thus increasing the mass created at $x$. By optimality, we get
\begin{equation}
\log\rho_{\tau}(x)+V(x)-e^{\prime}(h(x))\geq0.
\label{pcreate}
\end{equation}
Since 
\[\bigg(\frac{1}{1+C_{0}\tau}\bigg)\frac{\lambda}{\sup\{e^{-V}\}}e^{-V}<s,
\]
when we combine \eqref{universal} and \eqref{pcreate} we conclude
\[h<\bigg(\frac{C_{0}}{1+C_{0}\tau}\bigg)\frac{\lambda}{\sup\{e^{-V}\}}e^{-V}
\hspace{1em} {\rm{in}} \hspace{1em} A_{\lambda}. 
\]
Let $C_{\lambda}=\{x\in A_{\lambda}\hspace{1mm}:\hspace{1mm}T(x)\not\in A_{\lambda}\}$
and note that $\rho(C_{\lambda})>0$. Otherwise by \eqref{interior-mass} and the previous inequality
\begin{align*}
\int_{A_{\lambda}}\bigg(\frac{1}{1+C_{0}\tau}\bigg)\frac{\lambda}{\sup\{e^{-V}\}}e^{-V}\hspace{1mm}dx&>
\rho_{\tau}(A_{\lambda})\geq\rho_{\tau}(T(A_{\lambda}))
\\ &\geq\rho(T^{-1}(T(A_{\lambda})))-\tau\int_{TA_{\lambda}}h\hspace{1mm}dx\\
&\geq \int_{A_{\lambda}}\frac{\lambda}{\sup\{e^{-V}\}}e^{-V}\hspace{1mm}dx-\bigg(\frac{C_{0}\tau}{1+C_{0}\tau}\bigg)\frac{\lambda}{\sup\{e^{-V}\}}e^{-V}\hspace{1mm}dx\\
&=\int_{A_{\lambda}}\bigg(\frac{1}{1+C_{0}\tau}\bigg)\frac{\lambda}{\sup\{e^{-V}\}}e^{-V}\hspace{1mm}dx.
\end{align*}
Define the sets
\[
C_{\lambda}^{1}:=\bigg\{ x\in C_{\lambda}\hspace{1mm}:\hspace{1mm}T(x)\in\Omega\bigg\}
\hspace{1em}{\rm{and}}\hspace{1em}
C_{\lambda}^{2}:=\bigg\{ x\in C_{\lambda}\hspace{1mm}:\hspace{1mm}T(x)
\in\partial\Omega\bigg\}.
\]
Since $C_{\lambda}=C_{\lambda}^{1}\cup C_{\lambda}^{2}$, we
have that either $\rho(C_{\lambda}^{1})>0$
or $\rho(C_{\lambda}^{2})>0$. Suppose we are in the first case. Then,
we can find a point $x$ which is a Lebesgue point for $T$ such that $T(x)$ is a
Lebesgue point for $\rho_{\tau}$. If we stop sending some mass
from $x$ to $T(x)$, then, by optimality we obtain
\[
\log\rho_{\tau}(x)+V(x)-\log(\rho_{\tau}(T(x))-V(T(x))-\frac{| x-T(x)|^{2}}{2\tau}\geq0.
\]
Since  
\[
\log\rho_{\tau}(T(x))\geq\log
\bigg[\bigg(\frac{1}{1+C_{0}\tau}\bigg)\frac{\lambda}{\sup\{e^{-V}\}}e^{-V(T(x))}
\bigg],\]
and
\[
\log\bigg[\bigg(\frac{1}{1+C_{0}\tau}\bigg)\frac{\lambda}
{\sup\{e^{-V}\}}e^{-V(x)}\bigg]>\log\rho_{\tau}(x),\]
we get a contradiction.

Now, suppose $\rho(C_{\lambda}^{2})>0$. We perturb $(\gamma,h)$
by not moving some mass from $x$ to the boundary. By optimality
we must have
\[
\log\rho_{\tau}(x)+V(x)-\Psi(T(x))-\frac{| x-T(x)|^{2}}{2\tau}\geq0.
\]
Since $\lambda>\rho_{\tau}(x)$
and $\Psi>\log\lambda+V(x),$ we get a contradiction.

Second, we prove the upper bound.\\
By assumptions (C1), (C7), (B1), and (B2), after making $\varepsilon$ smaller, 
we can guarantee that  
\[[e_{x}^{\prime}]^{-1}(\log r+V)>0 
\hspace{1em} {\rm{in}} \hspace{1em} \Omega,\]
and
\[\Psi<\log r+V
\hspace{1em} {\rm{on}} \hspace{1em} \partial\Omega
,\]
for all $r>1/\varepsilon.$\\
Let $\Lambda>1/\varepsilon$ satisfy  
$ \hspace{1mm} \frac{\Lambda}{\inf\{e^{-V}\}}e^{-V} \geq \rho$
and set $A_{\Lambda}=\bigg\{\rho_{\tau}>\frac{\Lambda}{\inf\{e^{-V}\}}e^{-V}\bigg\}.$\\
In order to get a contradiction, suppose $\rho_{\tau}(A_{\Lambda})>0.$\\
For each $x\in A_{\Lambda},$ we perturb $(\gamma,h)$ by increasing
$h(x)$ and hence decreasing the amount of mass created in $x$. By optimality we get
\[
e^{\prime}(h(x),x)-\log\rho_{\tau}(x)-V(x)\geq0.
\]
Since $\frac{\Lambda}{\inf\{e^{-V}\}}e^{-V}\geq\Lambda,$ we deduce that
$h$ is non-negative in $A_{\Lambda}$. Now, we consider the following cases:

\textit{Case 1:} the mass of \textbf{$\rho_{\tau}$ }in\textbf{ $A_{\Lambda}$
}does not come from\textbf{ $\partial\Omega.$} Let $B_{\Lambda}=T^{-1}(A_{\Lambda})$
and observe that due to \eqref{interior-mass},
\[
\int_{A_{\Lambda}}\frac{\Lambda}{\inf\{e^{-V}\}}e^{-V}\hspace{1mm}dx<\rho_{\tau}(A_{\Lambda})\leq\rho(B_{\Lambda})-\tau\int_{A_{\Lambda}}h\hspace{1mm}dx<\int_{B_{\Lambda}}\frac{\Lambda}{\inf\{e^{-V}\}}e^{-V}\hspace{1mm}dx,
\]
which implies
\[
| A_{\Lambda}|<| B_{\Lambda}|.
\]
Hence, we can find a Lebesgue point $x\in B_{\Lambda}\backslash A_{\Lambda}$.
If we stop transporting some mass from $x$ to $T(x),$ then  by
optimality, we obtain
\[
-\frac{| x-T(x)|^{2}}{2\tau}+\log\rho_{\tau}(x)+V(x)-\log\rho_{\tau}(T(x))-V(T(x))\geq0.
\]
Now by construction,
\[
\log\frac{\Lambda}{\sup\{e^{-V}\}}e^{-V(x)}>\log\rho_{\tau}(x),\hspace{1em}
{\rm{and}}
\hspace{1em}\log\rho_{\tau}(T(x))>\log\frac{\Lambda}{\sup\{e^{-V}\}}e^{-V(T(x))}.
\]
When we combine this with the previous inequality we reach a contradiction.

\textit{Case 2:} the mass of\textbf{ $\rho_{\tau}$ }comes partially from\textbf{
$\partial\Omega$. }Let $D_{\Lambda}\subset A_{\Lambda}$
be the set of points $y$ such that the mass $\rho_{\tau}(y)$ comes
from the boundary;
i.e., $D_{\Lambda}:=\{y\in A\hspace{1mm}:\hspace{1mm}S(y)\in\partial\Omega\}.$
Also, let  $y\in D_{\Lambda}$ be a Lebesgue point for $S.$ Then, if we stop
moving some mass from $S(y)$ to $y,$ by optimality we obtain
\[
\frac{-| S(y)-y|^{2}}{2\tau}+\Psi(S(y))-\log\rho_{\tau}(y)-V(y)\geq0.
\]
Since $\rho_{\tau}(x)>\Lambda$
and $\Psi<\log\Lambda+V(x),$ we get a contradiction. This concludes 
the proof.
\end{proof} \label{bar} \end{proposition}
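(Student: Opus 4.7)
The plan is to argue by contradiction via local perturbations of the optimal pair $(\gamma,h)$, in the spirit of Proposition~\ref{perturba2}. The key preparation is to choose $\varepsilon\in(0,1)$ small enough that the following four inequalities hold simultaneously: by (C8), $[e_x']^{-1}(\log r+V(x))\le C_0\, r$ for all $(r,x)\in(0,\varepsilon)\times\overline\Omega$; by (C7), (C1) and (B1), $[e_x']^{-1}(\log r+V(x))>0$ for all $r>1/\varepsilon$; and $\Psi(x)>\log r+V(x)$ on $\partial\Omega$ for $r<\varepsilon$ while $\Psi(x)<\log r+V(x)$ on $\partial\Omega$ for $r>1/\varepsilon$. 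Both inequalities on $\Psi$ follow from boundedness of $\Psi$ and $V$. With this calibration the lower bound and upper bound proofs are dual; I describe the lower bound in detail.

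Suppose $A_\lambda:=\{\rho_\tau<\tfrac{1}{1+C_0\tau}\tfrac{\lambda}{\sup e^{-V}}e^{-V}\}$ has positive Lebesgue measure. A perturbation that locally decreases $h$ on $A_\lambda$ (equivalently, increases the mass created at $x$) is admissible in $ADM$, and by minimality of $\rho_\tau$ it produces the pointwise Euler--Lagrange inequality $\log\rho_\tau(x)+V(x)\ge e'(h(x))$ a.e. on $A_\lambda$. Inverting and applying the chosen bound on $[e_x']^{-1}$ gives $h(x)\le\tfrac{C_0}{1+C_0\tau}\tfrac{\lambda}{\sup e^{-V}}e^{-V}$ a.e. on $A_\lambda$. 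I next claim that $C_\lambda:=\{x\in A_\lambda:T(x)\notin A_\lambda\}$ has positive $\rho$-mass. Otherwise $T(A_\lambda)\subset A_\lambda$, and the constraint~\eqref{interior-mass} together with the $h$-bound forces the strict inequality $\rho_\tau(A_\lambda)>\rho_\tau(A_\lambda)$, a contradiction.

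I split $C_\lambda=C_\lambda^1\sqcup C_\lambda^2$ according to whether $T(x)\in\Omega$ or $T(x)\in\partial\Omega$; at least one of these carries positive $\rho$-mass. In the interior case I select a Lebesgue point $x\in C_\lambda^1$ such that $T(x)$ is a Lebesgue point of $\rho_\tau$, and perturb by withholding some mass from the transport $x\mapsto T(x)$; minimality yields $\log\rho_\tau(x)+V(x)\ge\log\rho_\tau(T(x))+V(T(x))+\tfrac{|x-T(x)|^2}{2\tau}$, which contradicts $x\in A_\lambda$, $T(x)\notin A_\lambda$ and the definition of $A_\lambda$. In the boundary case the same type of perturbation against the transport to $\partial\Omega$ gives $\log\rho_\tau(x)+V(x)\ge\Psi(T(x))+\tfrac{|x-T(x)|^2}{2\tau}$, contradicting $\rho_\tau(x)<\lambda$ via $\Psi>\log\lambda+V$ on $\partial\Omega$. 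The upper bound is strictly dual: one perturbs $h$ upward on $A_\Lambda$ to obtain $h\ge 0$ there, and splits according to whether mass in $A_\Lambda$ originates from $B_\Lambda:=T^{-1}(A_\Lambda)\subset\Omega$ (yielding $|A_\Lambda|<|B_\Lambda|$ via the mass balance and allowing the same interior-contradiction argument at a Lebesgue point of $B_\Lambda\setminus A_\Lambda$) or from $\partial\Omega$ (yielding a boundary perturbation against $S$ contradicted by $\Psi<\log\Lambda+V$).

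The main obstacle I anticipate is the joint calibration of $\varepsilon$ with $e$, $V$ and $\Psi$ so that all four guiding inequalities are activated for the \emph{same} threshold $\varepsilon$, independent of $\tau$; and the careful mass-balance bookkeeping in the presence of the reaction term $h$ and the boundary pieces $\gamma_{\partial\Omega}^{\Omega}$, $\gamma_{\Omega}^{\partial\Omega}$, since unlike the pure transport setting the total mass is not conserved and $T$, $S$ can send mass into $\partial\Omega$. The loss factor $1/(1+C_0\tau)$ in the lower barrier is precisely what absorbs the reaction production bound $h\le C_0\rho_\tau$.
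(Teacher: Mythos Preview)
Your proposal is correct and follows essentially the same argument as the paper: the same calibration of $\varepsilon$ via (C8), (C7), (C1), (B1), the same perturbation of $h$ to obtain the Euler--Lagrange inequality $\log\rho_\tau+V\gtrless e'(h)$ on the bad sets, the same mass-balance contradiction to produce $C_\lambda$ (resp.\ $B_\Lambda\setminus A_\Lambda$), and the same interior/boundary case split via $T$ and $S$. Your summary of the bookkeeping and of why the factor $1/(1+C_0\tau)$ appears is accurate.
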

For the next Lemma we recall that we have assumed that 
$\gamma_{\partial\Omega}^{\partial\Omega}=0.$
\begin{lemma}\textbf{(Transportation bound)} Let $\varepsilon$,
$\rho$, $\lambda,$ and $\Lambda$ be as in Proposition \ref{bar}.
Then, there exists $C>0$ such that
\[
| y-x|\leq C\sqrt{\tau}\hspace{1em}\forall(x,y)\in supp(\gamma). 
\]
Here, $C$ depends only on $\varepsilon,$  $\lambda$, $\Lambda,$ $||\Psi||_{\infty},$
and $||V||_{\infty}.$ 
\begin{proof} Let $(x,y)$ be a point in $supp(\gamma).$ Then, we perturb the plan $\gamma$ by not moving some mass from $x$
to $y$. By optimality,
\[
\Psi(y)1_{\Omega\times\partial\Omega}-\Psi(x)1_{\partial\Omega\times
\Omega}+
\big(\log\rho_{\tau}(x)+V(x)\big)1_{\Omega}(x)-
\big(\log\rho_{\tau}(y)-V(y)\big)1_{\Omega}(y)-\frac{| 
x-y|^{2}}{2\tau}\geq0.
\]
Thus, the result follows (B1), (B2), (C1), and Proposition \ref{bar}. 
 \label{bound} \end{proof} \end{lemma}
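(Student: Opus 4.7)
The plan is to follow the perturbation strategy outlined by the author and make it rigorous along the lines of Proposition \ref{perturba2}. Fix $(x,y) \in \mathrm{supp}(\gamma)$ with $(x,y) \notin \partial\Omega \times \partial\Omega$, and for $r > 0$ small, form a competitor by shrinking the mass of $\gamma$ near $(x,y)$ by a factor $(1-\varepsilon)$. Specifically, I would consider a small rectangle $U \times V$ around $(x,y)$ that does not intersect the diagonal portion of $\partial\Omega\times\partial\Omega$, and define
\[
\gamma^{r,\varepsilon} = \gamma - \varepsilon\, \gamma|_{U\times V}.
\]
To keep the marginals in $ADM(\mu, \mu_\tau^{r,\varepsilon})$, where $\mu$ is fixed and $\mu_\tau^{r,\varepsilon}$ may differ from $\mu_\tau$, I compensate: when $x \in \Omega$, I restore the missing mass in $\pi_{1\#}\gamma^{r,\varepsilon}$ at $x$ by pairing it with an appropriate diagonal or boundary contribution depending on the class of $(x,y)$; when $y \in \Omega$, the lost mass at $y$ is absorbed by modifying $h$ (which is allowed since $h$ is bounded by Lemma \ref{cre-mass} and $\rho_\tau$ is uniformly positive by Lemma \ref{uniform}, so a small perturbation leaves everything admissible).

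Computing the derivative of $E(\rho_\tau^{r,\varepsilon}) + C_\tau(\gamma^{r,\varepsilon}, h^{r,\varepsilon})$ at $\varepsilon = 0$, dividing by $|U\times V|$ and shrinking $r \downarrow 0$ at Lebesgue points of $\rho_\tau$, $h$, and of the relevant densities, one obtains by minimality the inequality
\[
\Psi(y)\mathbf{1}_{\Omega\times\partial\Omega}(x,y) - \Psi(x)\mathbf{1}_{\partial\Omega\times\Omega}(x,y) + \big(\log\rho_\tau(x)+V(x)\big)\mathbf{1}_\Omega(x) - \big(\log\rho_\tau(y)+V(y)\big)\mathbf{1}_\Omega(y) - \frac{|x-y|^2}{2\tau} \geq 0,
\]
exactly as the author announces. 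This proceeds by a case analysis on which of $x, y$ lie on $\partial\Omega$, and in each of the three remaining cases the compensation that keeps the pair admissible produces the corresponding entropy or boundary term on the left.

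The final step is to bound the non-quadratic terms uniformly in $\tau$. By (C1), $\|\Psi\|_\infty$ is finite. By Proposition \ref{bar}, for $\tau \leq 1$ we have $\rho_\tau$ bounded below by $\lambda/((1+C_0)\sup\{e^{-V}\}) \inf\{e^{-V}\} > 0$ and above by $\Lambda/\inf\{e^{-V}\} \sup\{e^{-V}\}$, using also that $V$ is Lipschitz on the bounded domain $\Omega$ from (B1) and (B2). Hence $|\log \rho_\tau + V|$ admits a bound $M$ depending only on $\lambda, \Lambda, \|V\|_\infty$. Rearranging the displayed inequality then gives
\[
\frac{|x-y|^2}{2\tau} \leq 2\|\Psi\|_\infty + 2M,
\]
so $|x-y| \leq C\sqrt{\tau}$ with $C := 2\sqrt{\|\Psi\|_\infty + M}$ depending only on the listed parameters.

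The main obstacle is purely bookkeeping: making the perturbation argument valid simultaneously across the three admissible cases (interior-interior, interior-boundary, boundary-interior), which requires choosing the right compensation (adjusting $h$, redirecting mass to the boundary, or diverting boundary mass to another boundary point) to keep $(\gamma^{r,\varepsilon}, h^{r,\varepsilon}) \in ADM(\mu, \mu_\tau^{r,\varepsilon})$ without spoiling the uniform upper/lower bounds needed to pass to the limit $r, \varepsilon \downarrow 0$. Since this is precisely the technology already developed in Proposition \ref{perturba2}, and since the author explicitly signals that such perturbation arguments will henceforth be omitted, the computation is routine once Lemma \ref{uniform} and Proposition \ref{bar} are in hand.
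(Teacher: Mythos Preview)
Your proposal is correct and follows essentially the same approach as the paper: you carry out exactly the perturbation ``stop moving mass from $x$ to $y$'' that the author announces, derive the same optimality inequality (your $+V(y)$ is in fact the correct sign; the paper has a typo), and close with the $L^\infty$ bounds from Proposition~\ref{bar} together with (B1), (B2), (C1). The only difference is that you spell out the rigorous perturbation in the style of Proposition~\ref{perturba2}, which the paper explicitly chooses to omit.
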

\begin{lemma} \textbf{ (Boundary Mass Flux estimate) }Let
$\varepsilon$, $\rho$, $\lambda,$ and $\Lambda$ be as in Proposition
\ref{bar}. Then, there exists $C>0$ such that 
\[
\gamma(\partial\Omega\times\Omega\cup\Omega\times\partial\Omega)\leq C\sqrt{\tau}.
\]
Here, $C$ depends only on $\varepsilon,$ $\lambda,$ $\Lambda$,  $||\Psi||_{\infty},$
and $||V||_{\infty}.$ 
\begin{proof}
By Lemma \ref{bound}, no mass either sent or taken from
the boundary travels more than $C\sqrt{\tau}$. Then,
 at most a $C\sqrt{\tau}$ neighborhood
of $\partial\Omega$ can be sent to the boundary. The mass taken from
the boundary can fill at most a $C\sqrt{\tau}$ neighborhood of $\partial\Omega$. Hence, 
the desired result follows from (B2) and Proposition \ref{bar}.
\label{boundary} \end{proof} \end{lemma}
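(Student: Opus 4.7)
The plan is to combine the transport-range estimate from Lemma \ref{bound}, the $L^{\infty}$ barriers of Proposition \ref{bar}, and the Lipschitz regularity of $\partial\Omega$ from assumption (B2). First, I would observe that Lemma \ref{bound} forces every $(x,y)\in\mathrm{supp}(\gamma)\cap(\Omega\times\partial\Omega)$ to satisfy $|x-y|\le C\sqrt{\tau}$, so $x$ must lie in the tubular neighborhood
\[
N_{\tau}:=\{z\in\Omega:\mathrm{dist}(z,\partial\Omega)\le C\sqrt{\tau}\}.
\]
Symmetrically, every $(x,y)\in\mathrm{supp}(\gamma)\cap(\partial\Omega\times\Omega)$ forces $y\in N_{\tau}$. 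Thus both $\pi_{1\#}\gamma_{\Omega}^{\partial\Omega}$ and $\pi_{2\#}\gamma_{\partial\Omega}^{\Omega}$ are concentrated in $N_{\tau}$.

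Next, using the admissibility relations \eqref{interior-mass} together with the standing convention $\gamma_{\partial\Omega}^{\partial\Omega}=0$, I would estimate
\[
\gamma(\Omega\times\partial\Omega)=\pi_{1\#}\gamma_{\Omega}^{\partial\Omega}(N_{\tau})\le\int_{N_{\tau}}\rho\,dx,\qquad \gamma(\partial\Omega\times\Omega)=\pi_{2\#}\gamma_{\partial\Omega}^{\Omega}(N_{\tau})\le\int_{N_{\tau}}(\rho_{\tau}+\tau h)\,dx.
\]
By Proposition \ref{bar}, the densities $\rho$ and $\rho_{\tau}$ are uniformly bounded by a constant depending only on $\Lambda$, $\inf e^{-V}$, and $\|V\|_{\infty}$, while Lemma \ref{cre-mass} provides an $L^{\infty}$ bound on $h$ (as recalled in the proof of Lemma \ref{uniform}). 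Hence both integrands above are bounded by a constant independent of $\tau$.

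Finally, since (B2) guarantees that $\partial\Omega$ is bounded and Lipschitz, a standard finite-covering-by-Lipschitz-graphs argument gives $|N_{\tau}|\le C'\sqrt{\tau}$ for a constant $C'$ depending only on $\partial\Omega$. Combining the three steps yields the desired bound with a constant depending only on $\varepsilon$, $\lambda$, $\Lambda$, $\|\Psi\|_{\infty}$, and $\|V\|_{\infty}$. The only point requiring care is the tubular-neighborhood volume estimate for a Lipschitz domain, which is routine; I do not foresee any substantive obstacle.
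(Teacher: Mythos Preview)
Your proposal is correct and follows essentially the same approach as the paper: use Lemma~\ref{bound} to confine the relevant marginals of $\gamma$ to a $C\sqrt{\tau}$-tubular neighborhood of $\partial\Omega$, bound the densities there via Proposition~\ref{bar}, and invoke (B2) for the volume of the tube. The only minor remark is that, for the bound on $\rho_{\tau}+\tau h$, Lemma~\ref{interior} (or Corollary~\ref{comparable}) is the natural reference here rather than Lemma~\ref{cre-mass}, since the latter's bound on $h$ depends on $\tau$; either way the product $\tau h$ stays bounded, so this does not affect the argument.
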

\begin{lemma} \textbf{(Interior Mass Creation estimate) } Let
$\varepsilon$, $\rho$, $\lambda,$ and $\Lambda$ be as in Proposition
\ref{bar}. Then, there exists $C>0$  such that 
\[
\int_{\Omega}| h| \hspace{1mm} \hspace{1mm}dx\leq C
\hspace{1em}
{\rm{and}}
\hspace{1em}
| h|\leq C.
\]
Here, C  depends only on  $\varepsilon,$ $\Omega$, $\lambda,\Lambda,$ and 
$||V||_{\infty}.$
\begin{proof} By item $(iii)$ of Proposition \ref{step}, we know
\[
e^{\prime}(h)=\log\rho_{\tau}+V.
\]
Consequently, by Proposition \ref{bar},
\[
\log\bigg[\bigg(\frac{1}{1+C_{0}\tau}\bigg)\lambda\bigg]-||V||_{\infty}
\leq e_{x}^{\prime}(h(x))\leq\log(\Lambda)+||V||_{\infty},
\hspace{1em} \forall x\in \Omega.
\]
Using assumptions (C2), (C7), (B1) and (B2), we get that $h$ is bounded.
Thus, since $\Omega$ is bounded, the result follows.
\label{interior} \end{proof} \end{lemma}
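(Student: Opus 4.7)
The plan is to derive the pointwise bound on $h$ directly from the characterization given by Proposition \ref{step} and then pass to the $L^1$ bound by using the boundedness of $\Omega$.

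First I would invoke item (ii) of Proposition \ref{step}, which gives $e'(h) = \log \rho_\tau + V$ almost everywhere. Since $e'_x$ is a homeomorphism from $D(e_x)$ onto $\mathbb{R}$ (by assumption (C6)), this can be inverted to yield
\[
h(x) = [e_x']^{-1}\bigl(\log \rho_\tau(x) + V(x)\bigr) \qquad \text{a.e.\ in } \Omega.
\]
Next I would use Proposition \ref{bar} together with (B1) to sandwich the quantity $\log\rho_\tau + V$ between two $\tau$-independent constants. Explicitly,
\[
\log\Bigl[\tfrac{1}{1+C_0\tau}\,\tfrac{\lambda}{\sup e^{-V}}\,e^{-V}\Bigr] + V
\le \log\rho_\tau + V \le
\log\Bigl[\tfrac{\Lambda}{\inf e^{-V}}\,e^{-V}\Bigr] + V,
\]
which after simplification reduces to a compact interval $[A,B] \subset \mathbb{R}$ depending only on $\lambda,\Lambda,\|V\|_\infty$ (and on $\tau \in (0,1)$, which only makes $A$ larger in absolute value but keeps it finite).

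Then I would combine this with assumption (C7): since $[e_x']^{-1}(r)$ is jointly continuous in $(r,x)$ (continuity in $x$ for fixed $r$ from (C7), continuity in $r$ for fixed $x$ from (C6), and continuity in $r$ uniform in $x$ from the second half of (C7)), the image of the compact set $[A,B] \times \overline{\Omega}$ under this map is bounded. This gives the pointwise bound $|h| \le C$ with $C$ depending only on $\varepsilon,\lambda,\Lambda,\|V\|_\infty$ (and $\Omega$ through $\overline{\Omega}$). The $L^1$ bound is then immediate from (B2): $\int_\Omega |h|\,dx \le C|\Omega|$.

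The only subtle step is the one ensuring joint continuity of $[e_x']^{-1}$ so that its values on a compact set are uniformly bounded; this is a direct consequence of (C7) combined with compactness of $\overline{\Omega}$ (itself guaranteed by (B2)), so there is no real obstacle. No further perturbation argument of the kind used to prove Proposition \ref{bar} is needed here, since the lemma is essentially a corollary of item (ii) of Proposition \ref{step} together with the barriers already established.
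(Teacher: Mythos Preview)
Your proposal is correct and follows essentially the same approach as the paper: invoke the identity $e'(h)=\log\rho_\tau+V$ from Proposition~\ref{step}, bound the right-hand side via Proposition~\ref{bar} and (B1), then use (C6), (C7), and compactness of $\overline{\Omega}$ from (B2) to conclude $|h|\le C$, with the $L^1$ bound following from $|\Omega|<\infty$. Note that you correctly cite item (ii) of Proposition~\ref{step}; the paper's reference to item (iii) is a typo, and your justification for the boundedness of $[e_x']^{-1}$ on $[A,B]\times\overline{\Omega}$ can be streamlined by using monotonicity of $[e_x']^{-1}$ in $r$ (from strict convexity (C2)) together with continuity in $x$ from (C7), rather than arguing joint continuity.
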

\begin{corollary}\label{comparable} Let
$\varepsilon$, $\rho$, $\lambda,$ and $\Lambda$ be as in Proposition
\ref{bar}. Then, there exist  positive constants $\kappa_{1},$ $\kappa_{2},$ and $\delta$  such that
\[
\kappa_{1}<\frac{\rho_{\tau}}{\rho_{\tau}+\tau h} <\kappa_{2}.
\]
for every $\tau\in(0,\delta).$ Here, $\kappa_{1}$ and $\kappa_{2}$  depend only on $\varepsilon, \lambda, \Lambda,||\Psi||_{\infty},$ and 
$||V||_{\infty}.$
\begin{proof}
This follows directly from Proposition \ref{bar} and
Lemma \ref{interior}. 
\end{proof}

\end{corollary}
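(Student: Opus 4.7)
The plan is to combine Proposition \ref{bar}, which furnishes $\tau$-independent upper and lower bounds on $\rho_\tau$, with Lemma \ref{interior}, which furnishes a uniform pointwise bound on $|h|$, and then check that for sufficiently small $\tau$ the perturbation $\tau h$ is negligible compared to $\rho_\tau$.

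First I would use Proposition \ref{bar} to fix some $\delta_0\in(0,1)$ small enough that $\tfrac{1}{1+C_0\tau}\geq\tfrac{1}{2}$ whenever $\tau\in(0,\delta_0)$, producing
\[
m:=\tfrac{1}{2}\cdot\tfrac{\lambda\,\inf\{e^{-V}\}}{\sup\{e^{-V}\}}\;\leq\;\rho_\tau\;\leq\;\tfrac{\Lambda\,\sup\{e^{-V}\}}{\inf\{e^{-V}\}}=:M
\]
almost everywhere, where $m$ and $M$ depend only on $\lambda,\Lambda,\|V\|_\infty$. Next, Lemma \ref{interior} supplies a constant $C>0$, depending only on the admissible parameters, with $|h|\leq C$ a.e.\ in $\Omega$.

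I would then shrink $\delta_0$ to some $\delta\in(0,\delta_0)$ with $\delta C<m/2$; this forces
\[
\tfrac{m}{2}\;\leq\;\rho_\tau-\tau|h|\;\leq\;\rho_\tau+\tau h\;\leq\;M+\delta C
\]
for every $\tau\in(0,\delta)$. Combining these inequalities with $m\leq\rho_\tau\leq M$ immediately yields
\[
\kappa_1:=\tfrac{m}{M+\delta C}\;\leq\;\tfrac{\rho_\tau}{\rho_\tau+\tau h}\;\leq\;\tfrac{2M}{m}=:\kappa_2,
\]
and both constants depend only on $\varepsilon,\lambda,\Lambda,\|\Psi\|_\infty$, and $\|V\|_\infty$, as required.

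There is no genuine obstacle here, and the argument is the one-line deduction the author indicates. The only subtlety to watch is the order of quantifiers: $\delta$ must be chosen after $C$ has been extracted from Lemma \ref{interior}, so that $\tau|h|\leq\tau C$ can be dominated by $\rho_\tau/2$; once this is arranged, the rest is an algebraic manipulation of the two pointwise bounds.
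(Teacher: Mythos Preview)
Your proposal is correct and matches the paper's approach exactly: the author's proof consists of the single sentence ``This follows directly from Proposition \ref{bar} and Lemma \ref{interior},'' and you have simply written out the algebra behind that sentence. There is nothing to add.
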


\begin{lemma} \textbf{(Boundary cost bound)} Let $\varepsilon$,
$\rho$, $\lambda,$ and $\Lambda$ be as in Proposition \ref{bar}.
Then, for every $\epsilon>0$, there exists $C>0$ such that 
\begin{equation*}
\int_{\overline{\Omega}\times\overline{\Omega}}\Psi(y)1_{\Omega\times\partial\Omega}-\Psi(x)1_{\partial\Omega\times\Omega}\hspace{1mm}d\gamma\geq\int_{\Omega}\Psi \hspace{1mm}d\mu-\int_{\Omega}\Psi \hspace{1mm}d\mu_{\tau}-\epsilon \int_{\overline{\Omega}\times\overline{\Omega}}\frac{| x-y|^{2}}{2\tau}\hspace{1mm}d\gamma\hspace{1mm}
-C\bigg( 1+\frac{1}{\epsilon} \bigg)\tau.
\end{equation*}
Here,  $C$ depends only on
$\varepsilon, {\rm{Lip}}\Psi,$ $\lambda,$ $\Lambda,$ and $||V||_{\infty}$. 
\begin{proof}
Set
$\zeta=\Psi$ in \eqref{eq:pre weak}. By doing this and rearranging terms, we get
\begin{align*}
\int_{\overline{\Omega}\times\overline{\Omega}}\Psi(y)&1_{\Omega\times\partial\Omega}-\Psi(x)1_{\partial\Omega\times\Omega}\hspace{1mm}d\gamma=\int_{\Omega}\Psi \hspace{1mm}d\mu-\int_{\Omega}\Psi \hspace{1mm}d\mu_{\tau}\\
& - \bigg(\int_{\Omega\times\Omega}\Psi(x)-\Psi(y) \hspace{1mm} d\gamma +\tau \int_{\Omega}\Psi h\hspace{1mm}dx\bigg)+R(\Psi,\tau),
\end{align*}
where, 
\begin{multline*}
R(\Psi,\tau)=\int_{\overline{\Omega}\times\overline{\Omega}}\bigg(\Psi\circ\pi_{2}-\Psi\circ\pi_{1}\bigg)1_{\{x=S(y)\}}\hspace{1mm}d\gamma{}_{\partial\Omega}^{\Omega}\\
-\int_{\overline{\Omega}\times\overline{\Omega}}\bigg(\Psi\circ\pi_{1}-\Psi\circ\pi_{2}\bigg)1_{\{T(x)=y\}}\hspace{1mm}d\gamma{}_{\Omega}^{\partial\Omega}. 
\end{multline*}
First, by Lemma \ref{bound} and Lemma \ref{boundary},
\[
| R(\Psi,\tau)|\leq C_{1}( {\rm Lip}\Psi,\lambda,\Lambda,\Omega,||V||_{\infty}) \tau.
\]
Second, by Lemma  \ref{interior} we have
\[
-\tau \int_{\Omega}\Psi h\hspace{1mm}dx\hspace{1mm}\geq-C_{3}(||\Psi||_{\infty},V,\lambda,\Lambda)|\Omega|\tau.
\]
Finally, by the  Young inequality, Proposition \ref{step}, Proposition \ref{bar}, and Lemma \ref{interior},
\begin{align*}
 -\int_{\Omega\times\Omega}\Psi(x)-\Psi(y)\hspace{1mm} d\gamma &\geq-\int_{\Omega} {\rm Lip} \Psi \hspace{0.1mm} |x-y|\hspace{1mm}d\gamma\\
 & -\frac{\tau}{2\epsilon}\int ({\rm Lip}\Psi)^{2}d\gamma-\epsilon\int_{\overline{\Omega}\times\overline{\Omega}}\frac{|x-y|^{2}}{2\tau}\hspace{1mm}d\gamma\\ 
 &\hspace{1em}\geq -C_{4}(\epsilon,\Psi,e,V,\lambda,\Lambda,\Omega)\frac{\tau}{\epsilon}-\epsilon\int_{\overline{\Omega}\times\overline{\Omega}}\frac{| x-y|^{2}}{2\tau}\hspace{1mm}d\gamma.\\&
\end{align*}
Thus, the desired result follows.
\end{proof} \label{cost bound} \end{lemma}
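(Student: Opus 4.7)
The plan is to test the mass-balance constraint \eqref{interior-mass} against $\zeta = \Psi$ and isolate the boundary-flux terms. Decomposing $\gamma = \gamma_\Omega^\Omega + \gamma_{\partial\Omega}^\Omega + \gamma_\Omega^{\partial\Omega}$ and applying the two marginal conditions to $\Psi\circ\pi_1$ and $\Psi\circ\pi_2$, I would first derive the exact identity
\begin{equation*}
\int_\Omega \Psi\,d\mu - \int_\Omega \Psi\,d\mu_\tau = \int_{\Omega\times\Omega}\bigl[\Psi(x)-\Psi(y)\bigr]\,d\gamma + \int_{\Omega\times\partial\Omega}\Psi(x)\,d\gamma - \int_{\partial\Omega\times\Omega}\Psi(y)\,d\gamma + \tau\int_\Omega \Psi\,h\,dy.
\end{equation*}
Rearranging for the quantity of interest, $\int_{\Omega\times\partial\Omega}\Psi(y)\,d\gamma - \int_{\partial\Omega\times\Omega}\Psi(x)\,d\gamma$, expresses it as $\int_\Omega\Psi\,d\mu - \int_\Omega\Psi\,d\mu_\tau$ plus three correction integrals that I must bound from below by $-\epsilon\int\frac{|x-y|^2}{2\tau}d\gamma - C(1+1/\epsilon)\tau$.

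The dominant correction is $\int_{\Omega\times\Omega}[\Psi(y)-\Psi(x)]\,d\gamma$. Since $\Psi$ is Lipschitz, Young's inequality with parameter $\epsilon > 0$ yields
\begin{equation*}
\mathrm{Lip}(\Psi)\,|x-y| \le \epsilon\,\frac{|x-y|^2}{2\tau} + \frac{\tau\,\mathrm{Lip}(\Psi)^2}{2\epsilon},
\end{equation*}
so after integrating, this term contributes at most $\epsilon\int\frac{|x-y|^2}{2\tau}d\gamma + C\tau/\epsilon$, where $C$ absorbs $\mathrm{Lip}(\Psi)^2$ and the total mass of $\gamma$, uniformly controlled via Proposition \ref{bar} and Lemma \ref{interior}. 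The two remaining ``cross'' corrections, $\int_{\Omega\times\partial\Omega}[\Psi(y)-\Psi(x)]\,d\gamma$ and $\int_{\partial\Omega\times\Omega}[\Psi(x)-\Psi(y)]\,d\gamma$, are handled by combining Lipschitz continuity with the transportation bound $|x-y| \le C\sqrt{\tau}$ on $\mathrm{supp}(\gamma)$ from Lemma \ref{bound} and the boundary mass-flux estimate $\gamma(\partial\Omega\times\Omega \cup \Omega\times\partial\Omega) \le C\sqrt{\tau}$ from Lemma \ref{boundary}; the product $\sqrt{\tau}\cdot\sqrt{\tau}$ delivers the required $O(\tau)$. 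Finally, $|\tau\int_\Omega \Psi\,h\,dy| \le \tau\,\|\Psi\|_\infty\,\|h\|_{L^1(\Omega)} = O(\tau)$ by Lemma \ref{interior}.

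The only genuinely delicate ingredient is the Young inequality split, since it simultaneously produces the tunable $\epsilon$ on the ``good'' negative term and the $1/\epsilon$ factor absorbed into $C$; every other correction is $O(\tau)$ essentially for free from the a priori bounds already established in Section 5. Collecting the three estimates and absorbing every constant into a single $C$ depending only on $\varepsilon$, $\mathrm{Lip}(\Psi)$, $\lambda$, $\Lambda$, and $\|V\|_\infty$ yields the stated inequality.
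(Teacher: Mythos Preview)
Your approach is correct and essentially identical to the paper's. You derive the mass-balance identity directly from the two marginal conditions in \eqref{interior-mass}, whereas the paper simply quotes the equivalent identity \eqref{eq:pre weak}; your ``cross'' corrections $\int_{\Omega\times\partial\Omega}[\Psi(y)-\Psi(x)]\,d\gamma$ and $\int_{\partial\Omega\times\Omega}[\Psi(x)-\Psi(y)]\,d\gamma$ are exactly the paper's remainder $R(\Psi,\tau)$, and the three estimates (Young on the interior piece, Lemmas \ref{bound}--\ref{boundary} on the cross terms, Lemma \ref{interior} on $\tau\int\Psi h$) match one-for-one.
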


\begin{proposition} \textbf{(Energy Inequality) } Let
$\varepsilon$, $\rho$, $\lambda,$ and $\Lambda$ be as in Proposition
\ref{bar}. Then, there exist  positive constants $C$ and $\delta$  such that
\[
\int_{\overline{\Omega}\times\overline{\Omega}} \frac{| x-y|^{2}}{2\tau}\hspace{1mm}d\gamma\leq C\bigg(E(\rho)-\int_{\Omega}\Psi \hspace{1mm}d\mu-E(\rho_{\tau})+\int_{\Omega}\Psi \hspace{1mm}d\mu_{\tau}+\tau\bigg),
\]
for every $\tau\in(0,\delta).$ Here, $C$ depends only on $\varepsilon, \lambda, \Lambda, {\rm{Lip}}\Psi,$ and 
$||V||_{\infty}.$
\label{inequality2}

\begin{proof}

By minimality of $\rho_{\tau},$ we obtain
\[
\int_{\overline{\Omega}\times\overline{\Omega}}\frac{| x-y|^{2}}{2\tau}+\Psi(y)1_{\Omega\times\partial\Omega}-\Psi(x)1_{\partial\Omega\times\Omega}\hspace{1mm}d\gamma+\tau\int_{\Omega} e(h)\hspace{1mm}dx+E(\rho_{\tau})\leq E(\rho).
\]
Also, by Lemma \ref{interior} and the above inequality,
\[
\int_{\overline{\Omega}\times\overline{\Omega}}\frac{| x-y|^{2}}{2\tau}+\Psi(y)1_{\Omega\times\partial\Omega}-\Psi(x)1_{\partial\Omega\times\Omega}\hspace{1mm}d\gamma\leq E(\rho)-E(\rho_{\tau})+C_{1}(\Psi,e,V,\mu,\Omega)\tau.\]
Now, using the above inequality and Lemma \ref{cost bound}, we obtain
\begin{multline*}
\int_{\overline{\Omega}\times\overline{\Omega}}\frac{| x-y|^{2}}{2\tau}\hspace{1mm}d\gamma+\int_{\Omega}\Psi \hspace{1mm}d\mu-\int_{\Omega}\Psi \hspace{1mm}d\mu_{\tau}-\epsilon \int_{\overline{\Omega}\times\overline{\Omega}}\frac{| x-y|^{2}}{2\tau}\hspace{1mm}d\gamma\hspace{1mm}\tau
-C_{2}\bigg( 1+\frac{1}{\epsilon} \bigg)\tau\\
\leq E(\rho)-E(\rho_{\tau})+C_{1}(\Psi,e,V,\lambda,\Lambda,\Omega)\tau.
\end{multline*}
Here, $C_{2}:=C_2(\Psi,e,V,\lambda,\Lambda,\Omega).$ Then, the result
follows by first choosing $\epsilon$ and then $\delta$ 
appropriately in the above inequality.
\end{proof} \end{proposition}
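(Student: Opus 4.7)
The plan is to combine three facts already in hand: the minimality of $\rho_\tau$ against the trivial competitor, the boundary cost bound of Lemma \ref{cost bound}, and the uniform $L^\infty$ control of $h$ from Lemma \ref{interior}. The statement has exactly the same structure one sees in the classical JKO framework, but twisted by the boundary penalty term $\int \Psi(y)1_{\Omega\times\partial\Omega}-\Psi(x)1_{\partial\Omega\times\Omega}\,d\gamma$; the whole point is to replace this boundary quantity by $\int_\Omega \Psi\,d\mu - \int_\Omega \Psi\,d\mu_\tau$ up to an $O(\tau)$ error and a small multiple of the transport cost that can be absorbed.

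First I would test the minimizer against $\rho$ itself, i.e.\ choose the admissible pair consisting of the identity transport together with $h\equiv 0$ on $\Omega$, to obtain
\[
\int_{\overline{\Omega}\times\overline{\Omega}}\frac{|x-y|^2}{2\tau}\,d\gamma
+\int_{\overline{\Omega}\times\overline{\Omega}}\bigl(\Psi(y)1_{\Omega\times\partial\Omega}-\Psi(x)1_{\partial\Omega\times\Omega}\bigr)\,d\gamma
+\tau\int_\Omega e(h)\,dx
\le E(\rho)-E(\rho_\tau).
\]
Next, Lemma \ref{interior} gives a uniform bound on $h$ (depending only on $\varepsilon$, $\lambda$, $\Lambda$, $\|V\|_\infty$), and by (C4) the composition $e(h)$ is correspondingly bounded on $\Omega$; hence $\bigl|\tau\int_\Omega e(h)\,dx\bigr|\le C_1\tau$. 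Substituting this into the minimality inequality yields
\[
\int_{\overline{\Omega}\times\overline{\Omega}}\frac{|x-y|^2}{2\tau}\,d\gamma
+\int_{\overline{\Omega}\times\overline{\Omega}}\bigl(\Psi(y)1_{\Omega\times\partial\Omega}-\Psi(x)1_{\partial\Omega\times\Omega}\bigr)\,d\gamma
\le E(\rho)-E(\rho_\tau)+C_1\tau.
\]

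Now I apply Lemma \ref{cost bound} with a free parameter $\epsilon\in(0,1)$ to lower-bound the boundary contribution by $\int_\Omega \Psi\,d\mu-\int_\Omega \Psi\,d\mu_\tau-\epsilon\int \frac{|x-y|^2}{2\tau}\,d\gamma-C_2(1+\tfrac{1}{\epsilon})\tau$. Plugging this in and rearranging,
\[
(1-\epsilon)\int_{\overline{\Omega}\times\overline{\Omega}}\frac{|x-y|^2}{2\tau}\,d\gamma
\le E(\rho)-\int_\Omega\Psi\,d\mu-E(\rho_\tau)+\int_\Omega\Psi\,d\mu_\tau
+\Bigl(C_1+C_2\bigl(1+\tfrac{1}{\epsilon}\bigr)\Bigr)\tau .
\]
Fixing any $\epsilon\in(0,1)$, for instance $\epsilon=\tfrac12$, and dividing by $1-\epsilon$ yields the claimed inequality with $C=\max\!\bigl(\tfrac{1}{1-\epsilon},\tfrac{C_1+C_2(1+1/\epsilon)}{1-\epsilon}\bigr)$. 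The constant $\delta$ comes from the requirement that the preceding Lemmas \ref{bar}, \ref{bound}, \ref{boundary}, \ref{interior}, and \ref{cost bound} all apply uniformly, as already ensured throughout this section.

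The only non-routine step is the invocation of Lemma \ref{cost bound}: this is precisely where the boundary geometry enters, since its proof required computing a discrete Euler--Lagrange variation against $\Psi$ using \eqref{eq:pre weak} and then controlling the resulting remainder $R(\Psi,\tau)$ by Lemmas \ref{bound} and \ref{boundary}. Since that work has already been done, the present proposition is essentially a bookkeeping step: the main (and only) subtlety is choosing $\epsilon$ strictly less than $1$ so that the transportation cost can be absorbed to the left-hand side without blowing up the $\tau$-coefficient. $\square$
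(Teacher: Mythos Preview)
Your proof is correct and follows essentially the same route as the paper: both start from the minimality inequality (comparing against the trivial competitor $(\mathrm{Id}\times\mathrm{Id})_{\#}\mu$ with $h\equiv 0$, which has zero cost by (C9)), use Lemma \ref{interior} to absorb $\tau\int_\Omega e(h)\,dx$ into a $C\tau$ term, and then invoke Lemma \ref{cost bound} with a free parameter $\epsilon$ to replace the boundary penalty by $\int_\Omega\Psi\,d\mu-\int_\Omega\Psi\,d\mu_\tau$ up to absorbable errors. Your presentation is in fact slightly cleaner: you make explicit the role of (C4) in bounding $e(h)$, and you fix $\epsilon=\tfrac12$ concretely rather than leaving it implicit.
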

For the next proposition, we will need the map 
$P\hspace{1mm}:\hspace{1mm}\rightarrow\mathbb{R}^{d}$, which 
was defined in 
Section 3. Such a map satisfies
\[
 |x-P(x)|=d(x,\partial\Omega)
 \hspace{1em}
 \forall{x\in\Omega}.
\]
\begin{lemma} \textbf{(Projection estimate)} \label{projection}
Assume $\Omega$ satisfies the interior ball condition with radius $r>0$.
Then, for all $x$ with $d(x,\partial\Omega)<\frac{r}{2}$, we have  
\[
| P(x)-P_{\Psi,\tau}(x)|\leq 4\tau{\rm{Lip}}\Psi
\hspace{1em}
{\rm{and}}
\hspace{1em}
| P(x)-P_{-\Psi,\tau}(x)|\leq 4\tau{\rm{Lip}}\Psi.
\]
\end{lemma}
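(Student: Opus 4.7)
The plan is to use the optimality of $P_{\Psi,\tau}(x)$ against the competitor $P(x)$, combined with the interior ball condition to convert an inequality between squared distances into a quadratic bound on $|P(x) - P_{\Psi,\tau}(x)|$.

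First I would write down the minimality of $P_{\Psi,\tau}(x)$:
\[
 \frac{|x-P_{\Psi,\tau}(x)|^2}{2\tau} + \Psi(P_{\Psi,\tau}(x)) \leq \frac{|x-P(x)|^2}{2\tau} + \Psi(P(x)),
\]
which, after using the Lipschitz bound on $\Psi$, rearranges to
\[
 \frac{|x - P_{\Psi,\tau}(x)|^2 - |x - P(x)|^2}{2\tau} \leq {\rm Lip}\,\Psi \cdot |P(x) - P_{\Psi,\tau}(x)|.
\]

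The main step is to show that the left-hand side dominates $\tfrac{1}{4\tau}|P(x) - P_{\Psi,\tau}(x)|^2$; this is where the interior ball hypothesis enters. Let $\nu$ be the unit inward normal at $P(x)$ and let $B_r(z) \subset \Omega$ be the interior ball at $P(x)$, so that $z = P(x) + r\nu$. Writing $d = |x - P(x)|$, we have $x = P(x) + d\nu$, and for any $q \in \partial\Omega$ the expansion
\[
 |x-q|^2 - |x-P(x)|^2 = |P(x)-q|^2 + 2d \langle \nu, P(x) - q\rangle
\]
holds. Since $q$ lies outside $B_r(z)$, the inequality $|q - z|^2 \geq r^2$ expands to $\langle \nu, P(x) - q\rangle \geq -|q-P(x)|^2/(2r)$. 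Therefore
\[
 |x-q|^2 - |x-P(x)|^2 \geq |P(x)-q|^2 \Bigl(1 - \frac{d}{r}\Bigr) \geq \tfrac{1}{2} |P(x) - q|^2,
\]
using $d < r/2$.

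Applying this with $q = P_{\Psi,\tau}(x)$ and substituting back into the first inequality yields
\[
 \frac{|P(x) - P_{\Psi,\tau}(x)|^2}{4\tau} \leq {\rm Lip}\,\Psi \cdot |P(x) - P_{\Psi,\tau}(x)|,
\]
which gives the desired bound $|P(x) - P_{\Psi,\tau}(x)| \leq 4\tau\,{\rm Lip}\,\Psi$. The estimate for $P_{-\Psi,\tau}$ is identical, since replacing $\Psi$ by $-\Psi$ changes neither the geometric argument nor the Lipschitz constant. The only conceivable obstacle is ensuring the interior ball radius works uniformly for the relevant $x$; this is handled by the assumption $d(x,\partial\Omega) < r/2$, which keeps $1 - d/r$ bounded away from zero.
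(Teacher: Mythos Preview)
Your proof is correct and follows essentially the same approach as the paper: both use the interior ball at $P(x)$ together with $|Q-P_{\Psi,\tau}(x)|\ge r$ (your expansion of $|q-z|^2\ge r^2$ is exactly the paper's ``cosine law'' step) to obtain $|x-P_{\Psi,\tau}(x)|^2-|x-P(x)|^2\ge\tfrac12|P(x)-P_{\Psi,\tau}(x)|^2$, and then balance this against the Lipschitz bound on $\Psi$. The only cosmetic difference is that the paper phrases the final step as a contradiction (assume $|P(x)-P_{\Psi,\tau}(x)|>4\tau\,\mathrm{Lip}\,\Psi$ and violate minimality), whereas you argue directly; the content is identical.
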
 \begin{proof} Let
$x\in\Omega$ such that $\hspace{1mm}d(x,\partial\Omega)<\frac{r}{2}.$ By the interior ball condition, $P(x)$ is unique. 
For a contradiction, suppose 
\[
| P(x)-P_{\Psi,\tau}(x)|>4\tau\mbox{Lip\ensuremath{\Psi}}.
\]
Denote by $Q$ the center of the circle of radius $r$ that is tangent
to $\partial\Omega$ at $P(x)$ and is contained in $\Omega$. Using the cosine law and the
fact that $| Q-P_{\Psi,\tau}(x)|\geq r,$
we can see that 
\begin{align*}
| x-P_{\Psi,\tau}(x)|^{2}& -| x-P(x)|^{2}  \\
 &\geq |P(x)-P_{\Psi,\tau}(x)|^{2}\bigg(1-\frac{| x-P(x)|}{r}\bigg)
  \geq\frac{| P(x)-P_{\Psi,\tau}(x)|^{2}}{2}.
\end{align*}
Hence, 
\[
\frac{| x-P_{\Psi,\tau}(x)|^{2}}{2\tau}-\frac{| x-P(x)|^{2}}{2\tau}+\Psi(P_{\Psi,\tau}(x))-\Psi(P(x)),
\]
is bounded from below by 
\[
\frac{| P(x)-P_{\Psi,\tau}(x)|^{2}}{4\tau}-\mbox{Lip}\Psi| P(x)-P_{\Psi,\tau}(x)|.
\]
Our assumption implies that the above quantity is strictly positive. This
contradicts the minimality of $P_{\Psi,\tau}(x)$. Thus, we get the first inequality of the Lemma. The second inequality can
be shown using the same argument.
\end{proof}

\appendix
\section{Minimizers of problem 1.1}
In this section, we study properties of the minimizers
of Problem 1.1 that are needed for Section 3. For this 
purpose, we let $\mu$ and
$\rho\hspace{1mm} dx$ be absolutely continuous measures 
in $\mathcal{M}(\Omega$) and let $\tau$
be a fixed positive number. Additionally, we
define
$m_{r}\hspace{1mm}:\hspace{1mm}\overline{\Omega}\rightarrow\mathbb{R}$
by 
\[
m_{r}(x):=[e_{x}^{\prime}]^{-1}(r),
\]
for any $r$ in $\mathbb{R}.$ 

Henceforth, we will say that a plan is optimal in the classical
sense if it is an optimal plan for the cost $d(x,y)=|x-y|^2.$ Whenever
$\gamma$ is an optimal plan in the classical sense 
and $\mu=\pi_{1\#}\gamma$ is absolutely continuous, we can guarantee the existence of a map
$T$ such that $(Id,T)_{\#}\mu=\gamma$ (see, for example, \cite[Theorem 6.2.4 and Remark 6.2.11]
{Ambrosio-Gigli-Savare}). Any map satisfying the previous property will be
called optimal in the classical sense.
\begin{lemma}
\label{refinement}
\textbf{(Refinement of pairs) }Let $\mu$ and $\rho \hspace{1mm} dx$ be
absolutely continuous measures in $\mathcal{M}(\Omega)$ and let $\tau$
be a positive constant.  Then, for any $(\gamma,h)$ in $ADM(\mu,\rho)$
there exists $(\gamma^{\prime},h)$ and $(\gamma^{\prime\prime},h^{\prime})$ in $ADM(\mu,\rho)$
with the following properties:
\begin{enumerate}[(i)]
\item The plans 
$(\gamma^{\prime})_{\Omega}^{\overline{\Omega}}$
and 
$(\gamma^{\prime})_{\overline{\Omega}}^{\Omega}$
are optimal in the classical sense, $(\gamma^{\prime})_{\partial\Omega}^{\partial\Omega}=0$
and
\[
C_{\tau}(\gamma^{\prime},h)-
C_{\tau}(\gamma,h)=
\int_{\overline{\Omega}\times\overline
{\Omega}\backslash\partial\Omega
\times\partial\Omega}\frac{|
x-y|^{2}}{2\tau}\hspace{1mm}d\gamma^{\prime}-\int_{\overline{\Omega}\times\overline{\Omega}\backslash\partial\Omega\times\partial\Omega}\frac{| x-y|^{2}}{2\tau}
\hspace{1mm}d\gamma.
\]
\item We have
\[
h^{\prime}(x)>[e_{x}^{\prime}]^{-1}\bigg(-\frac{\mbox{diam(\ensuremath{\Omega})}^{2}}{2\tau}-||\Psi||_{\infty}\bigg),
\]
for almost every $x$ in $\Omega$ and
\begin{multline}
\label{comp}
C_{\tau}(\gamma^{\prime\prime},h^{\prime})-C_{\tau}(\gamma^{\prime},h)\leq\tau\int_{A_{r}}\bigg[e^{\prime}(m_{r})+
\frac{| P_{-\Psi,\tau,}(y)-y|^{2}}{2\tau}-
\Psi(P_{-\Psi,\tau,}(y))\bigg](m_{r}-h)\hspace{1mm}dy\\
+\tau\varepsilon\int_{A_{R}^{\kappa}}\bigg[-e^{\prime}(h-
\varepsilon(\rho+\tau h))+\frac{\mbox{diam}^{2}(\Omega)}{2\tau}\\
-\frac{|S(y)-y|^{2}}{2\tau}+2||\Psi||_{\infty} \bigg](\rho+\tau h)\hspace{1mm}dy
\leq0 .
\end{multline}
\end{enumerate}
Here, $S$ is an optimal map, in the classical sense, such that $(S,Id)_{\#}(\rho+\tau h)=\gamma_{\overline{\Omega}}^{\prime\Omega}$
(this exists by the absolute continuity of $\rho+\tau h$), $A_{r}=\{h<m_{r}\},$
$A_{R}^{\kappa}=\{h>m_{R}+1\}\cap\{\rho+\tau h<\kappa\},$
$\kappa$ is a positive constant, and 
$\varepsilon<{\rm{min}}
(1\backslash\kappa, 1\backslash \tau) .$
Also, $r$ and $R$ are constants satisfying
\[
r<-\frac{\mbox{diam}(\Omega)^{2}}{2\tau}-||\Psi||_{\infty},
\]
\[
R>\frac{\mbox{diam}(\Omega)^{2}}{\tau}+2||\Psi||_{\infty},
\]
and
\[
 m_{R}>0 \hspace{1em} {\rm{in}} \hspace{1em} \Omega.
\]
\begin{proof}
It is easy to verify that if $\tilde{\gamma}$ satisfies 
$\pi_{i}\gamma_{\Omega}^{\overline{\Omega}}=
\pi_{i}(\tilde{\gamma})_{\Omega}^{\overline{\Omega}}$
and
$\pi_{i}\gamma_{\overline{\Omega}}^{\Omega}=
\pi_{i}(\tilde{\gamma})_{\overline{\Omega}}^{\Omega}$
for $i=1$ and $i=2,$ then $(\tilde{\gamma},h)\in ADM(\mu,\rho),$
and
\[
C_{\tau}(\tilde{\gamma},h)-C_{\tau}(\gamma,h)=\int_{\overline{\Omega}\times\overline{\Omega}\backslash\partial\Omega\times\partial\Omega}\frac{| x-y|^{2}}{2\tau}\hspace{1mm}d\tilde{\gamma}-\int_{\overline{\Omega}\times\overline{\Omega}\backslash\partial\Omega\times\partial\Omega}\frac{| x-y|^{2}}{2\tau}
\hspace{1mm}d\gamma.
\]
Consequently, if 
$(\tilde{\gamma})_{\overline{\Omega}}^{\Omega}$
and
$(\tilde{\gamma})_{\Omega}^{\overline{\Omega}}$
are optimal plans in the classical
sense, then 
\[
C_{\tau}(\tilde{\gamma},h)\leq C_{\tau}(\gamma,h).
\]
Now, $(i)$ follows from the observation that $(\tilde{\gamma}-\tilde{\gamma}_{\partial\Omega}^{\partial\Omega},h)\in ADM(\mu,\rho)$
and 
\[
C_{\tau}(\tilde{\gamma},h)=C_{\tau}(\tilde{\gamma}-\tilde{\gamma}_{\partial\Omega}^{\partial\Omega},h).
\]
We proceed to the proof of $(ii)$. Let $\gamma^{\prime}$ be the plan given by item $(i)$.
Define $h^{\prime}$ and $\gamma^{\prime\prime}$ by 
\[
h^{\prime}=h+(m_{r}-h)1_{A_{r}}-\varepsilon\pi_{2}^{\#}(\gamma^{\prime})_{\overline{\Omega}}^{A_{R}^{\kappa}},
\]
\[
\gamma^{\prime\prime}=\gamma^{\prime}+\tau(P_{-\Psi,\tau,}Id)_{\#}(m_{r}-h)1_{A_{r}}-\tau\varepsilon(\gamma^{\prime})_{\overline{\Omega}}^{A_{R}^{\kappa}}+\tau\varepsilon(Id,P_{\Psi,\tau,})_{\#}(\gamma^{\prime})_{\Omega}^{A_{R}^{\kappa}}.
\]
Here, we are using same notation as in the statement of the Lemma.
Observe that by \eqref{interior-mass},
$(\gamma^{\prime\prime},h^{\prime})\in ADM(\mu,\rho),$
\[
h^{\prime}=\begin{cases}
h & \mbox{{in}}\hspace{1em}\Omega\backslash A_{R}^{\kappa}\cup A_{r},\\
m_{r} & \mbox{{in}}\hspace{1em}A_{r},\\
h-\varepsilon(\rho+\tau h) & \mbox{{in}}\hspace{1em}A_{R}^{\kappa},
\end{cases}
\]
and 
\[
\pi_{2\#}\gamma^{\prime\prime}_{|\Omega}=\begin{cases}
\rho+\tau h & \mbox{{in}}\hspace{1em}\Omega\backslash A_{R}^{\kappa}\cup A_{r},\\
\rho+\tau m_{r} & \mbox{{in}}\hspace{1em}A_{r},\\
(1-\tau \varepsilon)(\rho+\tau h) & \mbox{{in}}\hspace{1em}A_{R}^{\kappa}.
\end{cases}
\]
Hence,
\begin{multline*}
C_{\tau}(\gamma^{\prime\prime},h^{\prime})-C_{\tau}(\gamma^{\prime},h)\\
\hspace{1.5em}
\leq \tau\int_{A_{r}}\big[e(m_{r})-e(h)\big]\hspace{1mm}dy+\tau\int_{A_{r}}
\bigg(\frac{| P_{-\Psi,\tau,}(y)-y|^{2}}{2\tau}-\Psi(P_{-\Psi,\tau,}
(y))\bigg)(m_{r}-h)\hspace{1mm}dy\\
\hspace{1.6em}
+\tau\int_{A_{R}^{\kappa}}e(h-\varepsilon(\rho+\tau h))-e(h)
\hspace{1mm}dy+\varepsilon\tau\int_{A_{R}^{\kappa}}\bigg
(\frac{\mbox{diam}^{2}(\Omega)}{2\tau}+||\Psi||_{\infty}\\
\hspace{21em} -\frac{|S(y)-y|^{2}}{2\tau}+||\Psi||_{\infty})\bigg)(\rho+\tau h)\hspace{1mm}dy.
\end{multline*}
Then, the desired result follows by the convexity of $e$ with respect
to its first variable
and the definition of $r,$ $R,$ and $\varepsilon.$
\end{proof}
\end{lemma}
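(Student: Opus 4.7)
The plan is to produce the two admissible pairs separately. For $(\gamma',h)$, I would exploit the fact that the admissibility constraint \eqref{interior-mass} only fixes marginals on the \emph{interior} side of each half of the plan: $\pi_{1\#}\gamma_{\Omega}^{\overline{\Omega}}=\mu$ and $\pi_{2\#}\gamma_{\overline{\Omega}}^{\Omega}=\rho\,dx+\tau h\,dx$. First I would discard the $\partial\Omega\times\partial\Omega$ part of $\gamma$: it contributes nothing to $C_\tau(\gamma,h)$ (the integrand in \eqref{cost} excludes that set) and nothing to the marginals on $\Omega$. Second, I would replace $\gamma_{\Omega}^{\overline{\Omega}}$ by the classical quadratic optimal plan with the same two marginals ($\mu$ and $\pi_{2\#}\gamma_{\Omega}^{\overline{\Omega}}$), and $\gamma_{\overline{\Omega}}^{\Omega}$ by the classical optimal plan with marginals $\pi_{1\#}\gamma_{\overline{\Omega}}^{\Omega}$ and $\rho+\tau h$. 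Since the $\Psi$-weighted boundary terms in \eqref{cost} depend only on the marginals of $\gamma_{\partial\Omega}^{\Omega}$ and $\gamma_{\Omega}^{\partial\Omega}$ (which I have not disturbed), the difference $C_\tau(\gamma',h)-C_\tau(\gamma,h)$ reduces exactly to the quadratic-cost differences claimed in (i), and each such difference is non-positive by classical optimality.

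For $(\gamma'',h')$ I would start from $\gamma'$ and set up a two-sided perturbation designed to pinch $h$ between the thresholds $m_r$ and $m_R+1$. On $A_r=\{h<m_r\}$, I would raise $h$ to $m_r$, and, to preserve the marginal $\pi_{2\#}(\gamma'')_{\overline{\Omega}}^{\Omega}=\rho+\tau h'$, I would simultaneously add the plan $\tau(P_{-\Psi,\tau},\mathrm{Id})_{\#}\bigl((m_r-h)\mathcal{L}^{d}\lfloor_{A_r}\bigr)$, i.e.\ I import the extra interior mass cheaply from the boundary via the $(-\Psi,\tau)$-minimizing projection. On $A_R^{\kappa}$, I would lower $h$ by $\varepsilon(\rho+\tau h)$, which requires removing the corresponding mass from $(\gamma')_{\overline{\Omega}}^{A_R^{\kappa}}$; some of it (the part landing from the boundary) just vanishes, and the part coming from the interior is redirected to $\partial\Omega$ via $(\mathrm{Id},P_{\Psi,\tau})_{\#}(\gamma')_{\Omega}^{A_R^{\kappa}}$. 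The condition $\varepsilon<\min(1/\kappa,1/\tau)$ guarantees $\rho+\tau h'\ge 0$ and that $h'$ stays in a region where $e'$ is well-defined.

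To obtain the inequality \eqref{comp}, I would write $C_\tau(\gamma'',h')-C_\tau(\gamma',h)$ as the sum of (a) the change in $\tau\!\int e(h)$, which by convexity of $e_x$ is bounded above by $\tau\!\int_{A_r}e'(m_r)(m_r-h)\,dy+\tau\varepsilon\!\int_{A_R^\kappa}\bigl(-e'(h-\varepsilon(\rho+\tau h))\bigr)(\rho+\tau h)\,dy$, and (b) the transport plus $\Psi$ cost of the added/removed boundary couplings, which by construction contribute exactly the $\frac{|P_{-\Psi,\tau}(y)-y|^2}{2\tau}-\Psi(P_{-\Psi,\tau}(y))$ and $\frac{\mathrm{diam}^2(\Omega)}{2\tau}-\frac{|S(y)-y|^2}{2\tau}+2\|\Psi\|_\infty$ terms in \eqref{comp}. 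Finally, to conclude each integrand is $\le 0$, I would use: on $A_r$, $e'(m_r)=r<-\mathrm{diam}^2/(2\tau)-\|\Psi\|_\infty$ dominates the positive contribution from the quadratic cost and $-\Psi$ term; on $A_R^{\kappa}$, $-e'(h-\varepsilon(\rho+\tau h))\le -e'(m_R)=-R<-\mathrm{diam}^2/\tau-2\|\Psi\|_\infty$ beats the geometric terms (using $|S(y)-y|^2\ge 0$ and $\mathrm{diam}^2/(2\tau)+2\|\Psi\|_\infty<R$).

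The main obstacle is bookkeeping: verifying that after both local surgeries the pair $(\gamma'',h')$ still satisfies \eqref{interior-mass}, checking that $\rho+\tau h'\ge 0$ (which forces the cutoff $\kappa$ and the smallness of $\varepsilon$), and writing the cost difference in a form where the monotonicity of $e'$ cleanly yields the sign. A minor subtlety is that the convexity step needs $e(m_r)-e(h)\le e'(m_r)(m_r-h)$ and $e(h-\varepsilon(\rho+\tau h))-e(h)\le-\varepsilon(\rho+\tau h)\,e'(h-\varepsilon(\rho+\tau h))$; both are immediate from the supporting-line inequality for the convex $e_x$, provided (C2) and (C6) apply at the relevant arguments, which they do for our choices of $r$ and $R$.
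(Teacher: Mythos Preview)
Your proposal is correct and follows essentially the same route as the paper: replace the two restrictions $\gamma_{\Omega}^{\overline{\Omega}}$ and $\gamma_{\overline{\Omega}}^{\Omega}$ by classically optimal plans with the same marginals and discard $\gamma_{\partial\Omega}^{\partial\Omega}$ for part~(i), then perform exactly the two local surgeries you describe (importing mass from $\partial\Omega$ via $P_{-\Psi,\tau}$ on $A_r$, and shaving off an $\varepsilon$-fraction of $(\gamma')_{\overline{\Omega}}^{A_R^\kappa}$ while redirecting its interior source to $\partial\Omega$ via $P_{\Psi,\tau}$) for part~(ii). Your convexity bounds $e(m_r)-e(h)\le e'(m_r)(m_r-h)$ and $e(h-\varepsilon(\rho+\tau h))-e(h)\le -\varepsilon(\rho+\tau h)\,e'(h-\varepsilon(\rho+\tau h))$, together with the sign analysis using the thresholds for $r$ and $R$, match the paper's argument line for line.
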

For the next lemma, we will need the set  $D(e),$ which was previously defined
to be the  interior of the set of points such that
$e$ is finite.
\begin{lemma}\textbf{(Optimal maps and Bounds on the created mass)}
\label{cre-mass} Let $\mu$ and $\rho \hspace{1mm} dx$ be
absolutely continuous measures in $\mathcal{M}(\Omega)$ and let $\tau$
be a positive constant. Additionally, let $(\gamma,h)$
be a pair in $Opt(\mu,\rho)$. Then
\begin{enumerate}[(i)]
 \item The plans $\gamma_{\Omega}^{\overline{\Omega}}$ and
 $\gamma_{\overline{\Omega}}^{\Omega}$ are optimal in the classical sense.
 \item There exist maps $T$ and $S$ from $\Omega$ to $\overline{\Omega}$ such that
 \[
  (Id,T)_{\#}\mu=\gamma_{\Omega}^{\overline{\Omega}}, 
 \]
 and
 \[
  (S,Id)_{\#}(\rho+\tau h)=\gamma_{\overline{\Omega}}^{\Omega}. 
 \]
 \item There exists a compact
set $K\subset\mathbb{R}\times\overline{\Omega}$ contained in 
$D(e)$
such that
\[
(x,h(x))\in K,
\]
for a.e $x$ in $\Omega.$
\end{enumerate}
\begin{proof}
By Lemma \ref{refinement}, $(i)$ follows by optimality. Since
$\mu$ and $\rho+\tau h$ are absolutely continuous, $(ii)$
follows from the classical optimal transportation theory
(see for example \cite[Theorem 6.2.4 and Remark 6.2.11]
{Ambrosio-Gigli-Savare}).

Now, we proceed to the proof of  $(iii).$
Let $r,$ $R,$ and $(\gamma^{\prime\prime},h^{\prime})$ be defined as in the previous Lemma and set $K=\{(q,x)\in\mathbb{R}\times \overline{\Omega} \hspace{1mm}:\hspace{1mm}m_{r}(x)\leq q\leq m_{R}(x)+1\}.$
By (C7) and (C8), $K$ is compact. By  construction, both integrands
in \eqref{comp} are
strictly negative. Thus, from the minimality of $(\gamma,h)$, we conclude that $|A_{r}|=|A_{r}^{\kappa}|=0.$
Since $\kappa$ was arbitrary and $h+\tau\rho\in L^{1}(\Omega)$, we obtain
the desired result. 
\end{proof}
\end{lemma}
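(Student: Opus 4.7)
The plan is to deduce all three items from the refinement result Lemma A.1 together with standard Brenier/McCann theory.

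For part (i), I would apply Lemma A.1(i) to the optimal pair $(\gamma,h)$. This produces an admissible pair $(\gamma',h) \in ADM(\mu,\rho)$ whose restrictions $(\gamma')_{\Omega}^{\overline{\Omega}}$ and $(\gamma')_{\overline{\Omega}}^{\Omega}$ are classically optimal and whose total cost differs from $C_\tau(\gamma,h)$ only by the change in the quadratic part, i.e.
\[
C_\tau(\gamma',h)-C_\tau(\gamma,h)=\int \tfrac{|x-y|^2}{2\tau}\,d\gamma'-\int \tfrac{|x-y|^2}{2\tau}\,d\gamma \le 0.
\]
Optimality of $(\gamma,h)$ forces equality in the last line. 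Since $\gamma_{\Omega}^{\overline{\Omega}}$ and $(\gamma')_{\Omega}^{\overline{\Omega}}$ share both marginals (by construction of the refinement), the classical optimality of the latter transfers to the former, and likewise for $\gamma_{\overline{\Omega}}^{\Omega}$.

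For part (ii), since $\gamma$ is a positive measure, $\pi_{2\#}\gamma_{\overline{\Omega}}^{\Omega}=(\rho+\tau h)\,dx$ is a positive absolutely continuous measure, and the first marginal of $\gamma_{\Omega}^{\overline{\Omega}}$ equals the absolutely continuous $\mu$. Combining this with the classical optimality from (i), the Brenier–McCann theorem (e.g.\ \cite[Theorem 6.2.4 and Remark 6.2.11]{Ambrosio-Gigli-Savare}) produces maps $T,S:\Omega\to\overline{\Omega}$ such that $(Id,T)_{\#}\mu=\gamma_{\Omega}^{\overline{\Omega}}$ and $(S,Id)_{\#}(\rho+\tau h)\,dx=\gamma_{\overline{\Omega}}^{\Omega}$.

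For part (iii), I would apply Lemma A.1(ii). Choose $r,R$ as in A.1(ii) and let $(\gamma'',h')\in ADM(\mu,\rho)$ be the refined pair with the explicit pointwise bounds. Inequality \eqref{comp} reads
\[
C_\tau(\gamma'',h')-C_\tau(\gamma',h)\le 0,
\]
with strictly negative integrands whenever $|A_r|>0$ or $|A_R^\kappa|>0$, by the choice of $r,R$ and convexity of $e$ in its first variable. Since $(\gamma,h)$ is optimal and $C_\tau(\gamma',h)\le C_\tau(\gamma,h)$ by step (i), the pair $(\gamma'',h')$ would otherwise have strictly smaller cost, contradicting optimality. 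Hence $|A_r|=|A_R^\kappa|=0$ for every admissible $\kappa$; because $\rho+\tau h\in L^1(\Omega)$, letting $\kappa\uparrow\infty$ gives $m_r(x)\le h(x)\le m_R(x)+1$ for a.e. $x\in\Omega$. Taking
\[
K:=\{(q,x)\in\mathbb{R}\times\overline{\Omega}\,:\,m_r(x)\le q\le m_R(x)+1\},
\]
assumption (C7) makes $m_r,m_R$ continuous so $K$ is compact, and (C5)–(C6) give $m_r(x),\,m_R(x)+1\in D(e_x)=(a(x),\infty)$, so $K\subset D(e)$ as required.

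The main obstacle is part (iii), where one must be careful to exploit the strict convexity in (C2) and the definitions of $r,R,\varepsilon$ so that each integrand in \eqref{comp} is strictly negative on the respective set, and then verify that the box $K$ actually sits inside $D(e)$ using (C5)–(C7); the other two items are essentially bookkeeping on top of the refinement and classical optimal transport.
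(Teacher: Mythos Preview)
Your proposal is correct and follows essentially the same approach as the paper: all three items are deduced from the refinement Lemma~\ref{refinement} plus classical Brenier--McCann theory, with (iii) obtained by forcing $|A_r|=|A_R^\kappa|=0$ via the strict negativity of the integrands in \eqref{comp} and then letting $\kappa\to\infty$. The only cosmetic difference is that the paper invokes (C7)--(C8) for the compactness of $K$, whereas you appeal to (C7) for continuity of $m_r,m_R$ on $\overline{\Omega}$ and then (C5)--(C6) for $K\subset D(e)$; both are adequate.
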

In the next proposition, we will say that a point in $\Omega$ is a density point for 
$\rho+\tau h$ if it is a point of with density $1$ for the set 
$\{\rho+\tau h>0 \}$ and it is a Lebesgue point for $\rho$ and $h.$
\begin{proposition} \label{perturba}
Let $\mu$ and $\rho \hspace{1mm} dx$ be
absolutely continuous measures in $\mathcal{M}(\Omega)$ and let $\tau$
be a positive constant. Additionally, let $(\gamma,h)$
be a pair in $Opt(\mu,\rho)$. If $T$
and $S$ are the maps given by Lemma \ref{cre-mass}, 
then the following inequalities hold: 
\begin{itemize}
\item Let $y_{1}$ and $y_{2}$ be points in $\Omega.$ Assume that $y_{1}$
is a density point for $\rho+\tau h$ and  a Lebesgue point for $S$
and that $y_{2}$ is a Lebesgue point
for $h$. Then 
\end{itemize}
\begin{equation}
e^{\prime}(h(y_{1}))+\frac{|y_{1}-S(y_{1})|^{2}}{2\tau}\leq e^{\prime}(h(y_{2}))+\frac{|y_{2}-S(y_{1})|^{2}}{2\tau}.\label{eq:-14}
\end{equation}
\begin{itemize}
\item Let $y_{1}$ and $S(y_{1})$ be points in $\Omega.$ Assume that $y_{1}$
is a density point for $\rho+\tau h$ and a Lebesgue point for $S$.
Then for any $y_{2}\in\partial\Omega,$ we have 
\begin{equation}
e^{\prime}\circ h(y_{1})+\frac{|y_{1}-S(y_{1})|^{2}}{2\tau}\leq\frac{|y_{2}-S(y_{1})|^{2}}{2\tau}+\Psi(y_{2}).\label{eq:-16}
\end{equation}
\item Let $x_{1}\in\Omega$ be a Lebesgue point for the density of $\mu,$ and
$T$ and assume that $T(x_{1})\in\partial\Omega.$ Assume further that
$y_{1}\in\Omega$ is a Lebesgue point for $h$. Then 
\end{itemize}
\begin{equation}
\frac{|x_{1}-T(x_{1})|^{2}}{2\tau}+\Psi(T(x_{1}))\leq e^{\prime}(h(y_{1}))+\frac{|x_{1}-y_{1}|^{2}}{2\tau}.\label{eq:-25}
\end{equation}
\begin{itemize}
\item Let $x_{1}\in\Omega$ be a Lebesgue point for the density of $\mu,$ and
$T$ and assume that $T(x_{1})\in\partial\Omega.$ Then for any $y_{1}$ in
$\partial\Omega,$  
\end{itemize}
\begin{equation}
\frac{|x_{1}-T(x_{1})|^{2}}{2\tau}+\Psi(T(x_{1}))\leq\frac{|x_{1}-y_{1}|^{2}}{2\tau}+\Psi(y_{1}).\label{eq:1-5-1}
\end{equation}
\begin{itemize}
\item Let $y_{1}\in\Omega$ be a Lebesgue point for $h.$ Then for any $x_{1}\in\partial\Omega,$
\begin{equation}
0\leq e^{\prime}\circ h(y_{1})+\frac{|y_{1}-x_{1}|^{2}}{2\tau}-\Psi(x_{1}).\label{eq:-16-1}
\end{equation}
\end{itemize}
\begin{itemize}
\item Let $y_{1}\in\Omega$ be a density point for $\rho+\tau h$
such that $S(y_{1})\in\partial\Omega.$ Then, for any $x_{1}$ in $\partial\Omega,$
\end{itemize}
\begin{equation}
 \frac{|y_{1}-S(y_{1})|^{2}}{2\tau}-
 \Psi(S(y_{1}))\leq\frac{|y_{1}-x_{1}|^{2}}{2\tau}
 -\Psi(x_{1}).
 \label{37}
\end{equation}
\begin{itemize}
\item Let $y_{1}\in\Omega$ be a density point for $\rho+\tau h$
such that $S(y_{1})\in\partial\Omega.$ Then
\end{itemize}
\begin{equation}
\frac{|y_{1}-S(y_{1})|^{2}}{2\tau}-\Psi(S(y_{1}))\leq-e^{\prime}(h(y_{1})).  
\end{equation}

\begin{proof}
We only prove \eqref{eq:-14}; the proofs of the other inequalities are
analogous. Also, Proposition \ref{perturba2} provides heuristic arguments
that illustrate the method used to prove those inequalities. This 
Proposition is the analogue of 
\cite[Proposition 3.7]{Figalli-Gigli} in our
context. We have decided to include this proof since this is the first times we explain how to make these kinds of arguments rigorous with perturbations that involve mass creation.\\
\textbf{-Heuristic argument}
We provide the idea to show \eqref{eq:-14}. First suppose $S(y_{1})\in\Omega.$
Then we can take some mass from $S(y_{1})$ and instead of sending it to $y_{1},$ we
send it to $y_{2}.$ In order to end up with we an admisible pair, we then have to create 
the missing mass at $y_{1}$ and remove the extra mass at $y_{2}.$ In order to do this,
we have to decrease $h(y_{1})$ and increase $h(y_{2}).$ By doing this we save
$\frac{|y_{1}-S(y_{1})|^2}{2\tau}$ and we pay
\[
\frac{|y_{2}-S(y_{1})|^2}{2\tau}-e^{\prime}(h_{1})+e^{\prime}(h_{2}). 
\]
Hence, \eqref{eq:-14} follows by minimality. If $S(y_{1})\in \partial\Omega,$ when we do 
the previous perturbation we save $\frac{|y_{1}-S(y_{1})|^2}{2\tau}+\Psi(S(y_{1}))$ and we pay
\[
\frac{|y_{2}-S(y_{1})|^2}{2\tau}+\Psi(S(y_{1}))-e^{\prime}(h_{1})+e^{\prime}(h_{2}),
\]
Thus, we get the same conclusion.\\
\textbf{-Rigorous proof}
We define $\gamma^{r,\varepsilon}$ and $h^{r,\varepsilon}$ by
\[
\gamma_{r}^{\varepsilon}=\gamma_{\overline{\Omega}}^{B_{r}(y_{1})^{c}}+(1-\varepsilon)\gamma_{\overline{\Omega}}^{B_{r}(y_{1})}+\varepsilon(\pi_{1},\mathcal{T}_{y_{1}}^{y_{2}})_{\#}\gamma_{\overline{\Omega}}^{B_{r}(y_{1})},\]
and
\[
h_{r}^{\varepsilon}=h-\frac{\varepsilon}{\tau}(\pi_{2\#}\gamma_{\overline{\Omega}}^{B_{r}(y_{1})})+\frac{\varepsilon}{\tau}(\mathcal{T}_{y_{1}}^{y_{2}}\circ\pi_{2\#}\gamma_{\overline{\Omega}}^{B_{r}(y_{1})}).\]
Here, $\mathcal{T}_{y_{1}}^{y_{2}}(y)=y-y_{1}+y_{2},$  and $r$ is small enough so that $B_{r}(y_{1})$ and $B_{r}(y_{2})$ are disjoint
and contained in $\Omega$. \\
Note that $\pi_{\#}^{1}(\gamma^{r,\varepsilon})=\pi_{\#}^{1}\gamma$
and $\pi_{\#}^{2}\gamma-\tau h=\pi_{\#}^{2}\gamma^{r,\varepsilon}-\tau h^{r,\varepsilon}$.
Hence, $(\gamma^{r,\varepsilon},h^{r,\varepsilon})\in ADM(\mu,\nu)$. By optimality, we must have
\[
0\leq C(h^{r,\varepsilon},\gamma^{r,\varepsilon})-C_{\tau}(h,\gamma).
\]
Thus,
\begin{align*}0 & \leq\varepsilon\int_{B_{r}(y_{1})}\bigg[\frac{|\mathcal{T}_{y_{1}}^{y_{2}}-S|^{2}}{2\tau}-\frac{|Id-S|^{2}}{2\tau}\bigg](\rho+\tau h)\hspace{1mm}dy\\
 & \hspace{10em}+\tau\int_{B_{r}(y_{1})}\bigg[e\bigg(h-\frac{\varepsilon}{\tau}(\rho_{\tau}+\tau h)\bigg)-e(h)\hspace{1mm}\bigg]\hspace{1mm}dy\hspace{1mm}\\
 & \hspace{15em}+\tau\int_{B_{r}(y_{1})}\bigg[e\bigg(h\circ\mathcal{T}_{y_{1}}^{y_{2}}+\frac{\varepsilon}{\tau}(\rho+\tau h)\bigg)-e(h\circ\mathcal{T}_{y_{1}}^{y_{2}})\hspace{1mm}\bigg]\hspace{1mm}dx.
\end{align*}
If we divide by $\varepsilon$ and let $\varepsilon\downarrow0,$
using Lemma \ref{cre-mass}, the fact that $e$ is locally Lipschitz
in $D(e),$ and the dominated convergence Theorem,
we obtain
\begin{align*}0 & \leq\int_{B_{r}(y_{1})}\bigg[\frac{|\mathcal{T}_{y_{1}}^{y_{2}}-S|^{2}}{2\tau}-\frac{|y-S|^{2}}{2\tau}\bigg](\rho+\tau h)\hspace{1mm}dy-\int_{B_{r}(y_{1})}e^{\prime}(h)(\rho+\tau h)\hspace{1mm}dy\\
 & \hspace{79mm}+\int_{B_{r}(y_{1})}e^{\prime}(h\circ\mathcal{T}_{y_{1}}^{y_{2}})(\rho+\tau h)\hspace{1mm}dy.
\end{align*}
Recall $y_{1}$ is a density point for $\rho+\tau h$ and a Lebesgue point for $S$ and $y_{1}$
and $y_{2}$ are Lebesgue points for $h.$ Hence, when we divide by $\mathcal{L}^{d}(B_{r}(0))$ and
we let $r\downarrow0,$ we obtain \eqref{eq:-14}.
\end{proof}
\end{proposition}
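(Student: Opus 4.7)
The plan is to obtain each of the remaining inequalities by the same strategy used for \eqref{eq:-14}: construct an admissible competitor $(\gamma^{r,\varepsilon}, h^{r,\varepsilon})$ that differs from $(\gamma,h)$ only by a small rearrangement around a ball $B_r(y_1)$ (or $B_r(x_1)$), apply the optimality inequality $C_\tau(\gamma^{r,\varepsilon},h^{r,\varepsilon})\ge C_\tau(\gamma,h)$, differentiate at $\varepsilon=0$ using the dominated convergence theorem, and then let $r\downarrow 0$ using the density/Lebesgue point hypotheses on $y_1$ (resp.\ $x_1$). The choice of perturbation encodes the geometric meaning of each inequality, and falls into three families depending on whether mass is being relocated (i) interior-to-interior, (ii) interior-to-boundary, or (iii) on the created-mass component $h$.

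For \eqref{eq:-16}, I would reroute the mass arriving at $B_r(y_1)$ from $S(y_1)\in\Omega$ to the fixed boundary point $y_2\in\partial\Omega$: set $\gamma^{r,\varepsilon}=\gamma_{\overline\Omega}^{B_r(y_1)^c}+(1-\varepsilon)\gamma_{\overline\Omega}^{B_r(y_1)}+\varepsilon(\pi_1,\mathcal T_{y_1}^{y_2})_\#\gamma_{\overline\Omega}^{B_r(y_1)}$, with $\mathcal T_{y_1}^{y_2}\equiv y_2$, and compensate with $h^{r,\varepsilon}=h-\tfrac{\varepsilon}{\tau}\pi_{2\#}\gamma_{\overline\Omega}^{B_r(y_1)}$ (no mass is added on $\partial\Omega$ since created mass lives only in $\Omega$). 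For \eqref{eq:-25}, I would reroute mass that $T$ sends from $B_r(x_1)$ to the boundary, re-directing it to a fixed interior point $y_1$ and subtracting the corresponding $h$ there; \eqref{eq:1-5-1} uses an analogous rerouting from boundary to boundary. For \eqref{eq:-16-1} I would perturb only $h$: decrease $h$ on $B_r(y_1)$ by $\eta$ and compensate by sending extra mass from a boundary point $x_1$; admissibility then reduces to checking \eqref{interior-mass}. Finally, \eqref{37} and the last inequality use perturbations of $\gamma_{\partial\Omega}^{B_r(y_1)}$: for \eqref{37}, swap the source $S(y_1)$ for another boundary point $x_1$; for the last one, stop a little mass from flowing from $\partial\Omega$ to $y_1$ and compensate by decreasing $h(y_1)$, which yields a one-sided inequality matching the argument behind \eqref{pcreate}.

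In every case the derivative-at-zero computation produces an integral inequality on $B_r(y_1)$ or $B_r(x_1)$ whose integrands involve $e'\circ h$, $\tfrac{1}{2\tau}|\cdot-S(\cdot)|^2$, and $\Psi$ evaluated at appropriate points; after dividing by $|B_r(0)|$ and letting $r\downarrow 0$ the hypothesis that $y_1$ is a density point of $\rho+\tau h$ and a Lebesgue point of $S$ (resp.\ $h$, $T$, $\rho$) gives the pointwise bound.

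The main obstacle is the rigor of the perturbation when it modifies the created-mass density $h$. Two things must be controlled uniformly in $\varepsilon$: admissibility of $(\gamma^{r,\varepsilon}, h^{r,\varepsilon})$ and the ability to differentiate the $e$-term. Admissibility is automatic for small $\varepsilon$ whenever the perturbation is built to preserve the marginal identities \eqref{interior-mass}, which is the point of the paired modification of $\gamma$ and $h$. The differentiability relies on Lemma \ref{cre-mass}(iii), which confines $(x,h(x))$ to a compact subset of $D(e)$, so that for small $\varepsilon$ the perturbed values $h^{r,\varepsilon}$ remain in a slightly larger compact set in $D(e)$ where $e$ is Lipschitz by (C4); the difference quotient $\tfrac{1}{\varepsilon}[e(h^{r,\varepsilon})-e(h)]$ is thus dominated and converges to $e'(h)\,\partial_\varepsilon h^{r,\varepsilon}|_{\varepsilon=0}$, which allows applying the dominated convergence theorem exactly as in the proof of \eqref{eq:-14}. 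Once this mechanism is set up, each of the seven inequalities reduces to a bookkeeping exercise with the correct choice of perturbation.
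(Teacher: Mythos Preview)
Your proposal is correct and follows exactly the paper's own strategy: the paper proves only \eqref{eq:-14} rigorously (via a localized perturbation of $(\gamma,h)$ on $B_r(y_1)$, differentiation in $\varepsilon$, and passage $r\downarrow 0$) and declares the remaining inequalities analogous; you have simply sketched the specific competitor for each case. Your identification of Lemma~\ref{cre-mass}(iii) together with (C4) as the mechanism permitting the dominated-convergence step on the $e$-term is precisely what the paper uses.

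One small slip: for \eqref{eq:-16-1} the perturbation should \emph{increase} $h$ on $B_r(y_1)$, not decrease it. Adding $\varepsilon$ mass from $x_1\in\partial\Omega$ into $B_r(y_1)$ raises $\pi_{2\#}\gamma_{\overline\Omega}^{\Omega}$ there, so to preserve $\pi_{2\#}\gamma_{\overline\Omega}^{\Omega}=\rho+\tau h$ with $\rho$ fixed you must raise $h$ by $\varepsilon/\tau$; the first-order cost change is then $\varepsilon\bigl(e'(h(y_1))+\tfrac{|y_1-x_1|^2}{2\tau}-\Psi(x_1)\bigr)\ge 0$, giving \eqref{eq:-16-1}. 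With that correction the bookkeeping in each case is as you describe.
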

Henceforth, as we did in Section 5, we will omit the proof of these kinds of perturbation
arguments. They can be made rigorous using the ideas contained in the previous
Proposition. For the next proposition, we will need the sets  $D(e_{x}),$ which were previously defined
to be the  interior of the set of points $z$ such that $e(z,x)$ is finite.

\begin{proposition}\textbf{(Bounds on the transported mass)}
\label{trans-mass}  
With the notations and assumptions from Proposition \ref{perturba},
the following implication holds:
If there there exists a positive constant $\lambda_{0}$ 
such that
\[
\lambda_{0}\hspace{1mm} dx\leq \mu,
\]
and 
\[
\lambda_{0}\leq\rho,
\]
then there exists a positive number $\delta$ such that
\[
\frac{\lambda_{0}}{4}\leq \rho+\tau h,
\]
for all $\tau$ in $(0,\delta).$ Here, $\delta$ depends only on
$\lambda_{0},$ $\Psi,$ and $e.$
\begin{proof}
Let $\tilde{\rho}=\rho+\tau h.$ If the sets $D(e_{x})$ are
of the form $(a,\infty)$ with $a$ finite, then since $h(x)\in D(e_{x})$
and (C2), (C5), and (C8) hold, the lower bound follows easily by choosing $\delta$ sufficiently
small. Hence, we assume that the sets 
$D(e_{x})$ are of the form $(-\infty,\infty).$ (We remark that due to
(C8) the two conditions are mutually exclusive). 

By (C6) and (C8), there exits $\delta$ such that for every $\tau<\delta$
there exists $r$ such that 
\[
r<-||\Psi||_{\infty}
\]
and 
\[
\frac{1}{\tau}\bigg(\frac{\lambda_{0}}{4}-\rho\bigg)\leq m_{r}\leq\frac{1}{\tau}\bigg(\frac{\lambda_{0}}{2}-\rho\bigg)\hspace{1em}
{\rm{in}}\hspace{1em}\mbox{\ensuremath{\Omega}.}
\]
Set $A_{r}=\{\tilde{\rho}<\rho+\tau m_{r}(x)\}$ and $C_{r}=\{x\in A_{r}\hspace{1mm}:\hspace{1mm}T(x)\not\in A_{r}\}.$
For a contradiction, suppose 
$|A_{r}|>0.$ Note that $|C_{r}|\geq0.$
Otherwise, by \eqref{interior-mass}
\[
\frac{\lambda_{0}}{2}| A_{r}|>\tilde{\rho}(A_{r})\geq\tilde{\rho}(T(A_{r}))=\mu(T^{-1}T(A_{r}))\geq\lambda_{0}| A_{r}|.
\]
Define the sets 
\[
C_{r}^{1}:=\bigg\{ x\in C_{r}\hspace{1mm}:\hspace{1mm}T(x)\in\Omega\bigg\}\hspace{1em}{\rm {and}}\hspace{1em}C_{r}^{2}:=\bigg\{ x\in C_{r}\hspace{1mm}:\hspace{1mm}T(x)\in\partial\Omega\bigg\}.
\]
Since $C_{r}=C_{r}^{1}\cup C_{r}^{2},$ we have that either $| C_{r}^{1}|>0$
or $| C_{r}^{2}|>0.$ Suppose we are in the first case. Then,
we can find a point $x$ which is a Lebesgue point for $T$ such that
$T(x)$ is a Lebesgue point for $\tilde{\rho}.$ If we stop sending
some mass from $x$ to $T(x)$ then we can create the missing mass
at $T(x)$ and remove the extra mass at $x.$ To do this we have to
increase $h(x)$ and decrease $h(T(x)).$ By doing this, we produce
a pair in $ADM(\mu,\rho).$ Thus, by optimality, we must have
\[
e^{\prime}(h(x))-e^{\prime}(h(T(x))-\frac{| x-T(x)|^{2}}{2\tau}\geq0.
\]
By construction, if we use \eqref{interior-mass}, we obtain
\[
e^{\prime}(h(x))=e^{\prime}\bigg(\frac{\tilde{\rho}(x)-\rho(x)}{\tau}\bigg)<e^{\prime}(m_{r}(x))=r,
\]
and 
\[
e^{\prime}(h(T(x))=e^{\prime}\bigg(\frac{\tilde{\rho}(T(x))-\rho(T(x))}{\tau}\bigg)\geq e^{\prime}(m_{r}(x))=r.
\]
This  gives us  a contradiction. \\
Now, suppose $| C_{r}^{2}|>0.$
Then, we can find a point $x\in C_{r}^{2}$ such that $x$
is a Lebesgue point for $T$ and $h.$
If we stop moving some mass
from $x$ to the boundary, then we can remove the extra mass at $x$.
To do this we have to increase $h(x).$ By doing this, we produce
a pair in $ADM(\mu,\rho).$ By optimality, we must have
\[
e^{\prime}(h(x))-\Psi(T(x))-\frac{| x-T(x)|^{2}}{2\tau}\geq0.
\]
As before $e^{\prime}(h(x))<r$ and by construction $r-\Psi<0.$ This
gives us a contradiction. Hence, we conclude that
\[
\rho+\tau  h =\tilde{\rho}\geq \rho+\tau m_{r}\geq \frac{\lambda_{0}}{4}\hspace{1em}{\rm{a.e}}\hspace{3mm}{\rm{in}}\hspace{1em}\Omega.
\]
\end{proof}
\end{proposition}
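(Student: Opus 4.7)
The plan is to argue by contradiction, essentially following the same blueprint as the $L^\infty$ barrier in Proposition \ref{bar}, but with the roles of $\rho_\tau$ replaced by $\tilde{\rho} := \rho + \tau h$ (the \emph{post-creation} density) and with the Euler-Lagrange relation $e'(h) = \log \rho_\tau + V$ replaced by weaker perturbation inequalities on $h$ alone. First I would dispose of the easy case: if each $D(e_x)$ is of the form $(a(x),\infty)$ with $a(x)$ finite, then by (C2) and (C8) together with Lemma \ref{cre-mass}(iii), $h(x)$ is uniformly bounded below by some constant depending only on $e$, and the desired lower bound on $\tilde{\rho}$ is automatic once $\tau$ is small. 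So the main case is $D(e_x) = \mathbb{R}$.

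In that case, by (C6) and (C8), given any small $\tau$ I can select $r < -\|\Psi\|_\infty$ such that the associated function $m_r(x) = [e_x']^{-1}(r)$ satisfies
\[
\tfrac{1}{\tau}\bigl(\tfrac{\lambda_0}{4} - \rho\bigr) \leq m_r \leq \tfrac{1}{\tau}\bigl(\tfrac{\lambda_0}{2} - \rho\bigr) \quad \text{in } \Omega.
\]
I then define the "bad set" $A_r := \{\tilde{\rho} < \rho + \tau m_r\}$ (equivalently $\{h < m_r\}$) and assume for contradiction that $|A_r| > 0$. Using the optimal map $T$ produced by Lemma \ref{cre-mass}(ii) and the fact that $\pi_{1\#}\gamma_\Omega^{\overline{\Omega}} = \mu \geq \lambda_0 \,dx$, I observe that if every point of $A_r$ were sent by $T$ back into $A_r$, then $\tilde{\rho}(A_r) \geq \mu(T^{-1}T(A_r)) \geq \lambda_0 |A_r|$, contradicting $\tilde{\rho}(A_r) < \tfrac{\lambda_0}{2}|A_r|$. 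Therefore the "frontier set" $C_r := \{x \in A_r : T(x) \notin A_r\}$ has positive measure.

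Next I split $C_r = C_r^1 \cup C_r^2$ according to whether $T(x) \in \Omega$ or $T(x) \in \partial\Omega$. In the first case I take a Lebesgue point $x \in C_r^1$ of $T$ and $h$ such that $T(x)$ is a Lebesgue point of $h$; the perturbation consists of stopping an infinitesimal amount of transport from $x$ to $T(x)$, and compensating by increasing $h(x)$ and decreasing $h(T(x))$ so that admissibility \eqref{interior-mass} is preserved. Passing to the limit $\varepsilon \downarrow 0$ as in Proposition \ref{perturba}, optimality forces
\[
e'(h(x)) - e'(h(T(x))) - \tfrac{|x-T(x)|^2}{2\tau} \geq 0.
\]
But $x \in A_r$ gives $e'(h(x)) < r$, while $T(x) \notin A_r$ gives $e'(h(T(x))) \geq r$, a contradiction. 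In the case $T(x) \in \partial\Omega$, I instead perturb by not transporting an infinitesimal amount from $x$ to the boundary (compensated by raising $h(x)$), yielding $e'(h(x)) - \Psi(T(x)) - \tfrac{|x-T(x)|^2}{2\tau} \geq 0$; since $e'(h(x)) < r < -\|\Psi\|_\infty$, this is again a contradiction.

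The main obstacle, I expect, is the careful choice of $r$: it must be small enough (in the sense that $r < -\|\Psi\|_\infty$) to close the boundary sub-case, yet produce an $m_r$ large enough that $\rho + \tau m_r \geq \lambda_0/4$ but strictly less than $\rho + \lambda_0/2$, so that $A_r$ is genuinely "bad" and the mass-balance contradiction in step three is quantitative. This balance is precisely what (C8) and the lower bound $\rho \geq \lambda_0$ buy us, and it is also where the exclusion of the finite-$a(x)$ case becomes essential. The perturbation arguments themselves are by now standard in the framework of Proposition \ref{perturba}, so the technical verification of the limit $\varepsilon \downarrow 0$ can be omitted, as the author indicates.
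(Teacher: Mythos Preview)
Your proposal is correct and follows essentially the same approach as the paper's proof: the same case split on whether $D(e_x)$ is bounded below, the same choice of $r$ and $m_r$, the same bad set $A_r$ and frontier set $C_r$, the same mass-balance argument forcing $|C_r|>0$, and the same two perturbation contradictions according to whether $T(x)\in\Omega$ or $T(x)\in\partial\Omega$. The only cosmetic difference is that you explicitly note $A_r=\{h<m_r\}$, which the paper leaves implicit.
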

\bibliographystyle{plain}
\bibliography{Preprint166}

\end{document}